\date{}
\newcommand{\suchthat}{|\;}
\newcommand{\A}{\mathcal{A}}
\newcommand{\R}{\mathbb{R}}
\newcommand{\Z}{\mathbb{Z}}
\newcommand{\Q}{\mathbb{Q}}
\newcommand{\N}{\mathbb{N}}
\newcommand{\U}{\mathbb{U}}
\newcommand{\vst}{v_{\mbox{\scriptsize st}}}
\newcommand{\Dc}{\mathcal{D}}
\newcommand{\Ac}{\mathcal{A}}
\newcommand{\Xc}{\mathcal{X}}
\newcommand{\connect}{\#}
\newcommand{\order}{\mathcal{O}}
\newcommand{\lk}{\mbox{lk}}
\newcommand{\slk}{\mbox{slk}}
\newcommand{\chiS}{\chi_{\interp}}
\newcommand{\kiS}[2]{\chiS^{\scriptscriptstyle [#1,#2]}}
\newcommand{\interp}{
\begin{tikzpicture}[scale=0.2]
\useasboundingbox (-0.6,-1.2) rectangle (0.6,-0.4);
\begin{scope}[yshift=-1.2 cm]
\draw (-0.5,-0.5) .. controls (0,-0.5) and (0,0.5) .. (0.5,0.5);
\end{scope}
\end{tikzpicture}
}
\newcommand{\kiSneg}[2]{\chiSneg^{\scriptscriptstyle [#1,#2]}}
\newcommand{\chiSneg}{\chi_{\interpneg}}
\newcommand{\interpneg}{
\begin{tikzpicture}[scale=0.2]
\useasboundingbox (-0.6,-1.2) rectangle (0.6,-0.4);

\begin{scope}[yshift=- 1.2 cm]
\draw (-0.5,0.5) .. controls (0,0.5) and (0,-0.5) .. (0.5,-0.5);
\end{scope}

\end{tikzpicture}
}
\theoremstyle{plain}
\newtheorem{thma}{Theorem}[section]
\newtheorem{lema}[thma]{Lemma}
\newtheorem{coroa}[thma]{Corollary}
\newtheorem{propa}[thma]{Proposition}
\newtheorem{faita}[thma]{Fact}
\newtheorem{questions}[thma]{Questions}
\theoremstyle{definition}
\newtheorem{defia}[thma]{Definition}
\newtheorem{notaa}[thma]{Notation}
\newtheorem{rquea}[thma]{Remark}
\newcommand{\tata}{ \begin{tikzpicture} \useasboundingbox (-.3,-.1) rectangle (.3,.2);
\draw (-.2,0) -- (.2,0) (0,0) circle (.2);
\fill (-.2,0) circle (1.5pt) (.2,0) circle (1.5pt);
\end{tikzpicture}} 
\newcommand{\antisymbis}{\begin{tikzpicture}[scale=0.5]
\useasboundingbox (-1,-1) rectangle (4,1);
\coordinate (a) at (0,0);
\coordinate (b) at (30:1);
\coordinate (c) at (150:1);
\coordinate (d) at (0,-1);
\draw[dashed] (a) circle (1);
\draw [very thick] (d) -- (a) -- (c);
\draw [very thick] (a) --(b);
\draw [very thick] (a) node{$\bullet$};

\draw  [very thick] (1.5,-0.2)--(1.5,0.2);
\draw [very thick] (1.3,0)--(1.7,0);

\begin{scope}[xshift=3cm]
\coordinate (e) at (0,0);

\coordinate (f) at (30:1);
\coordinate (g) at (150:1);
\coordinate (h) at (0,-1);
\draw[dashed] (e) circle (1);
\draw [very thick] (e).. controls (1,0).. (g);
\draw [very thick] (e).. controls (-1,0).. (f);
\draw [very thick] (e) --(h);
\draw [very thick] (e) node{$\bullet$};

\draw [very thick] (e) node{$\bullet$};
\end{scope}
\end{tikzpicture}}
\newcommand{\Jacobibis}{\begin{tikzpicture}[scale=0.5]
\useasboundingbox (-2,-1) rectangle (8,1);

\draw[dashed] (0,0) circle (1);

\coordinate (a) at (0,0);
\coordinate (b) at (0,-0.5);
\coordinate (c) at (60:1);
\coordinate (d) at (120:1);
\coordinate (e) at (0,-1);
\coordinate (f) at (220:1);

\draw [very thick] (e)-- (a)--(c) ;
\draw [very thick] (a) -- (d);
\draw [very thick] (f) -- (b);
\draw (a) node{$\bullet$};
\draw (b) node{$\bullet$};

\draw  [very thick] (1.5,-0.2)--(1.5,0.2);
\draw [very thick] (1.3,0)--(1.7,0);

\begin{scope}[xshift=3cm]
\draw[dashed] (0,0) circle (1);

\coordinate (a) at (0,0);
\coordinate (b) at (60:0.5);
\coordinate (c) at (60:1);
\coordinate (d) at (120:1);
\coordinate (e) at (0,-1);
\coordinate (f) at (220:1);

\draw [very thick] (e) -- (a)--(c) ;
\draw [very thick] (a) -- (d);
\draw [very thick] (f) .. controls (1,0.25) .. (b);

\draw (a) node{$\bullet$};
\draw (b) node{$\bullet$};

\draw  [very thick] (1.5,-0.2)--(1.5,0.2);
\draw [very thick] (1.3,0)--(1.7,0);
\end{scope}
\begin{scope}[xshift=6cm]
\draw[dashed] (0,0) circle (1);

\coordinate (a) at (0,0);
\coordinate (b) at (120:0.5);
\coordinate (c) at (60:1);
\coordinate (d) at (120:1);
\coordinate (e) at (0,-1);
\coordinate (f) at (220:1);

\draw [very thick] (e) -- (a)--(c) ;
\draw [very thick] (a) -- (d);
\draw [very thick] (f) .. controls (1.4,0.25) .. (b);

\draw (a) node{$\bullet$};
\draw (b) node{$\bullet$};

\end{scope}

\end{tikzpicture}}
\title{On perturbative invariants of combed three-manifolds}
\author{Yohan Mandin-\/-Hublé}
\begin{document}

\maketitle

\abstract{We give a new definition of a universal finite type invariant of three-dimensional oriented rational homology spheres which counts configurations of trivalent graphs in such manifolds. 
Kontsevich introduced this invariant following Witten's study of the perturbative expansion of the Chern-Simons theory, using parallelizations of three-manifolds. In this article, we use combings instead of parallelizations to get a more flexible and convenient definition.}

\tableofcontents 

\section{Introduction}

We give a new definition of a universal finite type invariant $Z$ of three-dimensional oriented rational homology spheres which counts configurations of trivalent graphs in such manifolds. The invariant $Z$ is valued in a quotient of the real vector space formally generated by automorphism classes of trivalent graphs. This space is graded by half the number of vertices of the graphs. Following Witten's study of the perturbative expansion of the Chern-Simons theory in \cite{Witten}, Kontsevich introduced this invariant in \cite{ko} using parallelizations of three-manifolds. Kuperberg and Thurston proved its universality among invariants of integer homology spheres in \cite{kt}. Lescop proved its universality among invariants of rational homology spheres and gave more flexible definitions of $Z$ in~\cite{lesbook}. In this article, we give a new definition of $Z$ using combings instead of parallelizations.

  The main ingredients in the definition of $Z$ are special closed differential $2$-forms called \emph{propagators} on the configuration space of injections of two points in the manifold. Fixing the behaviour of propagators on the boundary of an appropriate compactification of this configuration space using a parallelization $\tau$ of the manifold $M$ outside a point gives rise to a topological invariant $Z(M,\tau)$ of $(M,\tau)$. The invariant $Z(M)$ is then defined as an explicit function of $Z(M,\tau)$, a constant $\beta$ called the \emph{beta anomaly}, and a $\Z$-valued invariant $p_1(\tau)$ of $\tau$ induced by the first Pontrjagin class. The map $p_1$ or some of its variants have been studied by Hirzebruch in \cite[Section 3.1]{hirzebruchEM}, by Kirby and Melvin in \cite{km}, and by Lescop in \cite[Chapter 5]{lesbook}.
  
  In this article, we show how to replace parallelizations by combings, which are nowhere vanishing vector fields on the manifold $M$ outside of a point. We define an invariant $Z(M,X)$ of a rational homology sphere $M$ equipped with a combing $X$.
We express $Z(M,X)$ as an explicit function of $Z(M)$, the above beta anomaly, and a $\Q$-valued invariant $p_1(X)$ of combings. The invariant $p_1$ was defined and studied by Lescop in \cite{lescomb}. It coincides with the invariant $p_1(\tau)$ of a parallelization $\tau$ when $X$ is the first vector of $\tau$. It generalizes an invariant of combings in closed $3$-manifolds defined and studied by Gompf in \cite{Go}. Our results give a more practical and flexible definition of the invariant $Z$ of a rational homology sphere.

 Our main motivation is the concrete computation of $Z$ from a Heegaard splitting of the manifold. Combings arise naturally in this setting. In \cite{lesHC},
 Lescop constructed a Morse propagator on a rational homology sphere equipped with a Morse function and an associated combing. She computed an invariant $\Theta(M,X)$ of a rational homology sphere $M$ equipped with a combing $X$ using such a Morse propagator. This invariant is equivalent to the degree one part of $Z(M,X)$. The new definition of $Z$ given in this article is easier to apply with Morse propagators.

\subsection{Specifying the setting}

In this article, a \emph{rational homology sphere} is a smooth, closed, oriented three-dimensional manifold with the same rational homology as $S^3$. Let $M$ be a rational homology sphere. Let $B(1)$ be the unit open ball in $\R^3$. We equip $M$ with a point $\infty\in M$ and an orientation-preserving diffeomorphism  from a neighbourhood $V_\infty$ of $\infty$ in $M$ to a neighbourhood $S^3 \setminus B(1)$ of $\infty$ in $S^3=\R^3 \cup \{\infty\}$. The manifold $M\setminus\{\infty\}$ is denoted by $\check{M}$. The tangent bundle of $M$ is denoted by $TM$. The unit tangent bundle to $M$ is $UM=(TM\setminus \{0\}) /\R_+^*$, where $\R_+^*$ acts by scalar multiplication. Let $\check{V}_\infty$ denote $V_\infty\setminus\{\infty\}$.

In this article, $X$ denotes a section of $U\check{M}$ that coincides with $(0,0,1)$ on $\check{V}_\infty$ via the identification provided by the above diffeomorphism. This identification will always be implicitly used in the rest of this article. We only consider homotopies of such sections that satisfy this condition at any time. A \emph{combing} is a homotopy class $[X]$ of such a section $X$.

Any three-dimensional oriented smooth manifold is parallelizable. In this article, parallelizations $\tau\colon \check{M}\times \R^3 \to T\check{M}$ of $\check{M}$ are assumed to coincide with the standard parallelization of $\R^3$ on $\check{V}_\infty$. We only consider homotopies of such parallelizations that satisfy this condition at any time.

The disjoint union of graphs equips the target vector space of $Z$ with the structure of an algebra. There exists an invariant $z$ of rational homology spheres such that $Z = \exp (z)$. The invariant $z$ counts embeddings of connected trivalent graphs in $M$. We denote the degree $n$ part of $z$ by $z_n$.

In this article, we define an invariant $z(M,[X])$ of a rational homology sphere $M$ equipped with a combing $[X]$. This is the content of Theorem~\ref{thm_inv_combing}.
We prove the variation formula $z(M,[X])=z(M)+1/4 p_1(X)\beta$. This is Theorem~\ref{thm_inv_var}. These theorems are the two main results of this paper.

The introduction of this article is organised as follows. In Section~\ref{sec_comb_multi_intro}, we introduce multisections of the bundle $X^\perp=T\check{M}/\R X$ for a combing $[X]$ on $\check{M}$. In Section~\ref{sec_propa_comb_intro}, we introduce propagating forms associated with a combing $[X]$ and a multisection of $X^\perp$. In Section~\ref{sec_graphs}, we introduce the necessary terminology on trivalent graphs. In Section~\ref{sec_main_result}, we state Theorems \ref{thm_inv_combing} and \ref{thm_inv_var}. In Section~\ref{sec_dual_main_result}, we define propagating chains associated with a combing $[X]$ and a multisection of $X^\perp$, and we state a dual version of Theorem~\ref{thm_inv_combing} in Theorem~\ref{thm_inv_combing_dual}.

\subsection{Combings and multisections}
\label{sec_comb_multi_intro}

In this article, we only consider Riemannian metrics $g$ on $\check{M}$ that coincide with the standard Euclidean metric on $V_\infty \setminus \{\infty\}$. Let $g$ be such a metric. We identify $U\check{M}$ with $\{v\in T\check{M} \suchthat g(v,v)=1\}\subset T\check{M}$.
 
	 Let $[X]$ be a combing of $\check{M}$. We identify $X^\perp$ with the bundle $X^\perp\subset T\check{M}$ of vectors orthogonal to $X$. Let $UX^\perp \subset T\check{M}$ be the intersection $U\check{M}\cap X^\perp$. The Lie group $S^1$ acts freely and transitively on $UX^\perp$ by rotations of axis $X$. This action equips $UX^\perp$ with the structure of a principal $S^1$-bundle. 
	 
	The \emph{Euler class} $e(UX^\perp)\in H^2(\check{M};\Z)\approx  H^2(M;\Z)$ of the principal $S^1$-bundle $UX^\perp$ represents the obstruction to the existence of a nowhere vanishing section of $X^\perp$. It is precisely defined in Section~\ref{sec_multisection}. Let $\order(X)$ denote the order of $e(UX^\perp)$ in the finite group $H^2(\check{M};\Z)$.
	 
	  Let $p\in \N\setminus\{0\}$. Let $\mathbb{U}_p$ denote the subgroup of $S^1$ of $p$-th roots of unity. Consider the free action of $\U_p\subset S^1$ on $UX^\perp$. As we will quickly show in Section~\ref{sec_multisection}, when $p$ is a multiple of $\order(X)$, the quotient bundle $UX^\perp_{p}=UX^\perp / \U_p$ is trivial. Let $f_p\colon UX^\perp \to UX^\perp_{p}$ denote the quotient map.
	  
	  \begin{defia}\label{def_multisection}
Let $[X]$  be a combing on $\check{M}$.
 Let $\vst\colon V_\infty \to  UX^\perp|_{\check{V}_\infty}$ be the constant section $(1,0,0)$. Let $p\in\N\setminus\{0\}$ be a multiple of $\order(X)$. A \emph{$p$-multisection of $UX^\perp$} is a section of $UX^\perp_p$ that coincides with $f_p\circ \vst$ on $V_\infty\setminus\{\infty\}$. A \emph{multisection of $UX^\perp$} is a pair $(p,\nu^p_X)$ where $p$ is as above and $\nu^p_X$ is a $p$-multisection of $UX^\perp$.
\end{defia}

A $p$-multisection of $UX^\perp$ locally lifts as in the following definition.
\begin{defia}\label{def_proj_combing}
Let $[X],p,\nu_X^p$ be as in Definition \ref{def_multisection}. Let $B$ be an open ball embedded in $\check{M}$. A \emph{lift $\nu_X\colon B \to UX^\perp|_{B}$ of $\nu^p_X$ over $B$} is a section of $UX^\perp$ on $B$ such that $f_p\circ \nu_X= \nu^p_X$. Complete $(X,\nu_X)$ to an orthonormal basis $(\nu_X,w_X,X)$
of $TM|_{B}$, which induces the orientation of $M$. Let $p_{(\nu_X,w_X,X)} \colon UM|_{V}\to S^2$ be the projection associated to this basis that sends $\nu_X$ to $(1,0,0)$, $w_X$ to $(0,1,0)$, and $X$ to $(0,0,1)$.
\end{defia}

\subsection{Propagating forms associated with multisected combings}	 
\label{sec_propa_comb_intro}
	
The space $\check{C}_2(\check{M})$ of injections of two points in $\check{M}$ has the same rational homology as $S^2$. It admits a natural Fulton-Mac Pherson type \cite{FultonMcP} compactification $C_2(M)$, which is a smooth $6$-dimensional smooth manifold with corners. Throughout this article, our general reference for configuration spaces of points in $M$ and their compactifications is \cite[Chapter 8]{lesbook}.
 Let $||\cdot||$ denote the standard Euclidean norm on $\R^3$.  The Gauss map $G_{S^3}$ is defined on $\check{C}_2(\R^3)$ to be
$$G_{S^3}(x,y)= \frac{y-x}{||y-x||}.$$
It extends to a smooth map $G_{S^3}$ on $C_2(S^3)$. 

There is a natural inclusion of $U\check{M}$ into $\partial C_2(M)$.\footnote{We include $U\check{M}$ into $\partial C_2(M)$ as follows. Let $(m,v)$ be in $U\check{M}$. Let $\gamma \colon ]-1,1[ \to \check{M}$ be an immersion such that $\gamma(0)=m$ and the direction of $\gamma'(0)$ is $v$. The function $((\gamma(0),\gamma(t)) \in \check{C}_2(\check{M}))_{t\in ]0,1[}$ admits a limit $c \in \partial C_2(M)$ at $t=0$. We identify $c$ and $(m,v)$.} There is a smooth \emph{Gauss map} $G_M\colon (\partial C_2(M) \setminus U\check{M}) \cup U\check{M}|_{\check{V}_\infty} \to S^2$. It is defined using $G_{S^3}$ and our chosen identification between $V_\infty$ and $S^3\setminus B(1)$. \emph{Propagating forms of $C_2(M)$}  are closed differential forms of degree $2$ on $C_2(M)$ that restrict to
$\partial C_2(M) \setminus U\check{M}$ as the pull-back of a $2$-form of volume one on $S^2$ by the Gauss map $G_M$.

A parallelization $\tau$ of $\check{M}$ induces a smooth map $p_{\tau}\colon \partial C_2(M) \to S^2$ that coincides
with $G_M$ on $(\partial C_2(M) \setminus U\check{M}) \cup U\check{M}|_{\check{V}_\infty}$. It is defined on $U\check{M}$ by the relation $p_{\tau}([\tau(m,v)])=v/||v||$, for any $(m,v)\in \check{M} \times (\R^3\setminus\{0\})$. A \emph{propagating form of $(C_2(M),\tau)$} is a propagating form of $C_2(M)$ that restricts to $\partial C_2(M)$ as the pull-back of a volume-one form on $S^2$ by the map $p_{\tau}$.

For $\theta \in S^1$, let $R_{\theta}\colon S^2 \to S^2$ denote the rotation about $(0,0,1)$ by an angle $\theta$. Consider the action of $S^1$ on $S^2$ given by $\theta\mapsto R_\theta$. Let $p\in\N\setminus\{0\}$. We say that a differential form $\omega$ on $S^2$ is $\U_p$-invariant if it is invariant with respect to the action of $\U_p\subset S^1$, that is
$$\forall k\in\{1,\dots,p\}, R_{\frac{2k\pi}{p}}^*\omega=\omega.$$

Let $[X],p,\nu_X^p,B$ be as in Definition \ref{def_proj_combing}. Let $\nu_X,\nu_X^\prime$ be two lifts of $\nu^p_X$ over $B$. There exists $k\in\mathbb{N}$ such that $$ p_{(\nu_X^\prime,w_X^\prime,X)}= R_{\frac{2k\pi}{p}}\circ  p_{(\nu_X,w_X,X)}.$$
Let $\omega$ be a $\U_p$-invariant form on $S^2$.  We have $p_{(\nu_X^\prime,w_X^\prime,X)}^*(\omega)=p_{(\nu_X,w_X,X)}^*(\omega)$.  There exists $k'\in\mathbb{N}$ such that on $U\check{M}|_{B\cap V_\infty}$, we have $p_{(\nu_X,w_X,X)}=R_{2k'\pi/p}\circ G_M$. So we have $p_{(\nu_X,w_X,X)}^*(\omega)=G_M^*(\omega)$ on $U\check{M}|_{B\cap V_\infty}$. This shows that the following definition is consistent.

\begin{defia}\label{def_operator}
Let $[X],p, \nu^p_X $ be as in Definition \ref{def_multisection}. We define a map $p_{(X,\nu_X^p)}^*$ from the space of $\U_p$-invariant forms on $S^2$ to the space of forms on $\partial C_2(M)$ as follows. Let $\omega$ be an $\U_p$-invariant form on $S^2$. For any ball $B\subset \check{M}$ and any lift $\nu_X$ of $\nu_X^p$ on $B$, we define $p_{(X,\nu_X^p)}^*(\omega)$ on $U\check{M}|_{B}$  as $$p_{(X,\nu_X^p)}^*(\omega)=p_{(\nu_X,w_X,X)}^*(\omega).$$
On $\partial C_2(M)\setminus U\check{M}$, we set $p_{(X,\nu_X^p)}^*(\omega)=G_M^*(\omega)$.
\end{defia}

\begin{defia}\label{def_propa_combing}
Let $[X],p, \nu^p_X $ be as in Definition \ref{def_multisection}. A \emph{propagating form of $(C_2(M),g,X, \nu^p_X)$} is a closed $2$-form $\Omega$ on $C_2(M)$ such that $\Omega=p_{(X,\nu_X^p)}^*(\omega)$
 on $\partial C_2(M)$ for some $\U_p$-invariant form $\omega$ on $S^2$ of volume one. 
\end{defia}
Propagating forms of $(C_2(M),g,X, \nu^p_X)$ exist because of homological reasons. Indeed, the vector space $H^3(C_2(M),\partial C_2(M);\R)$ is trivial, so the inclusion $\partial C_2(M) \subset C_2(M)$ induces a surjective map from $H^2(C_2(M);\R)$ to $H^2(\partial C_2(M);\R)$.

\subsection{Spaces of trivalent graphs}
\label{sec_graphs}

A \emph{trivalent graph} $\Gamma$ is a set of half-edges $H(\Gamma)$ equipped with two partitions. One partition is the vertex partition. It is denoted by $V(\Gamma)$. Its elements are of cardinality $3$ and are called vertices. The other partition is the edge partition. It is denoted by $E(\Gamma)$. Its elements are of cardinality $2$ and are called edges. The \emph{degree} of a trivalent graph is half the number of its vertices.

An \emph{edge-orientation} of a trivalent graph $\Gamma$ is the data of a total order on each edge of $\Gamma$. A \emph{vertex-orientation} of a trivalent graph $\Gamma$ is the data of a total order up to cyclic permutation on each vertex of $\Gamma$.

An \emph{isomorphism of trivalent graphs} is a bijection between the underlying sets of half-edges that preserves the vertex partition and the edge partition. An \emph{isomorphism of trivalent graphs equipped with an additional structure} is moreover required to preserve this structure.

Let $n\geq 1$ be an integer. Let $\xi_n=1/((3n)!2^{3n})$. Let $A_n^c(\emptyset)$ be the real vector space freely generated by isomorphism classes of connected trivalent graphs of degree $n$ equipped with a vertex-orientation. Let $\A_n^c(\emptyset)$ denote the quotient of $A_n^c(\emptyset)$ by antisymmetry and Jacobi relations pictured in Equation \ref{eq_relation}. The definition of these relations can be found in \cite[Section 6.3]{lesbook}.

\begin{equation}\label{eq_relation}
\left\{
\begin{array}{c}
\antisymbis \\
\Jacobibis
\end{array}
\right.
.\end{equation}

Let $\Dc_n^c(\emptyset)$ be the set of isomorphism classes of pairs $(\Gamma,j)$ where $\Gamma$ is an edge-oriented  connected trivalent graph with $2n$ vertices without looped edges, and $j\colon E(\Gamma) \to \{1,\dots,3n\}$ is a bijection.

\subsection{Main result}
\label{sec_main_result}

Let $\Gamma$ be an edge-oriented connected trivalent graph without looped edges. The space of injections of $V(\Gamma)$ into $\check{M}$ is denoted by $\check{C}_{V(\Gamma)}(\check{M})$. A vertex-orientation $o(\Gamma)$ of $\Gamma$ induces an orientation of $\check{C}_{V(\Gamma)}(\check{M})$, which we implicitly use throughout the article. See \cite[Corollary 7.2]{lesbook}. Let $[\Gamma,o(\Gamma)]$ denote
the class of $(\Gamma,o(\Gamma))$ in $\A_n^c(\emptyset)$.

Let $e$ be an edge of $\Gamma$ going from a vertex $u_1$ to a vertex $u_2$ of $\Gamma$. Let $p_e\colon \check{C}_{V(\Gamma)}(\check{M}) \to \check{C}_2(\check{M})$ be the smooth map defined as $p_e\colon c \mapsto (c(u_1),c(u_2))$.

Let $(\Gamma,j)$ be in $\Dc_n^c(\emptyset)$. Let $(\omega_i)_{i\in\{1,\dots,3n\}}$ be a family of $3n$ propagating forms of $(C_2(M),g,X, \nu^p_X)$.  The element 
$\bigl(\int_{\check{C}_{V(\Gamma)}(\check{M})}\bigwedge_{e\in E(\Gamma)} p_e^*(\omega_{j(e)}) \bigr)[\Gamma,o(\Gamma)]$
of $\A_n^c(\emptyset)$ does not depend on $o(\Gamma)$. Let $I(M,\Gamma,j,(\omega_i)_{i\in\{1,\dots,3n\}})[\Gamma]$ denote this element.

\begin{thma}\label{thm_inv_combing}
Let $g$ be a Riemannian metric on $\check{M}$. Let $[X]$ be a combing on $\check{M}$. Let $p\in\N\setminus\{0\}$ be a multiple of $\order(X)$. Let $\nu^p_{X}$ be a $p$-multisection of $UX^\perp$. Let $(\omega_i)_{i\in\{1,\dots,3n\}}$ be a family of $3n$ propagating forms of $(C_2(M),g,X, \nu^p_{X})$. The element
 $$z_n(M,(\omega_i)_{i\in\{1,\dots,3n\}})=\xi_n  \sum_{(\Gamma,j)\in \Dc_n^c(\emptyset)}  I(M,\Gamma,j,(\omega_i)_{i\in\{1,\dots,3n\}})[\Gamma]$$
 of $\Ac_n^c(\emptyset)$ does not depend on the choice of the family $(\omega_i)_{i\in\{1,\dots,3n\}}$ of propagating forms of $(C_2(M),g,X,\nu^p_{X})$. It does not depend on the choices of $g$, $\nu^p_{X}$, and $p$. It only depends on $[X]$ and on the orientation-preserving diffeomorphism type of $M$. We denote this element by $z_n(M,X)$.
\end{thma}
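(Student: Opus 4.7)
The plan is to establish independence from each choice in turn: first the family of propagating forms, then the metric $g$, then the multisection $\nu^p_X$, then the integer $p$, and finally homotopy invariance within $[X]$. The main tool throughout is a Stokes-type variation argument on the compactified configuration spaces $C_{V(\Gamma)}(\check{M})$, combined with the defining relations of $\Ac_n^c(\emptyset)$.

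First I would fix $g$, $X$, $p$, $\nu^p_X$ and vary the propagating forms. Given two admissible families $(\omega_i)$ and $(\omega_i')$, each difference $\omega_i'-\omega_i$ is closed and restricts to $\partial C_2(M)$ as $p_{(X,\nu_X^p)}^*(\omega_i'-\omega_i)$ for $\U_p$-invariant $2$-forms on $S^2$ of total mass zero, hence exact on $S^2$. Using surjectivity of $H^2(C_2(M);\R)\to H^2(\partial C_2(M);\R)$ one chooses primitives $\eta_i$ whose boundary restrictions are themselves pullbacks via $p_{(X,\nu_X^p)}^*$ of $\U_p$-invariant $1$-forms, interpolates $\omega_i(t)=(1-t)\omega_i+t\omega_i'$, and writes
\[
\frac{d}{dt}I\bigl(M,\Gamma,j,(\omega_i(t))\bigr)=\sum_{e\in E(\Gamma)}\int_{\check{C}_{V(\Gamma)}(\check{M})} p_e^*\bigl(d\eta_{j(e)}\bigr)\wedge\bigwedge_{e'\neq e}p_{e'}^*\bigl(\omega_{j(e')}(t)\bigr).
\]
Stokes' theorem converts this into a sum of integrals over the codimension-one strata of $C_{V(\Gamma)}(\check{M})$, which are indexed by subsets $A\subset V(\Gamma)$ with $|A|\geq 2$ (collisions) and by faces where some vertices escape to $\infty$. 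A dimension count kills all contributions with $|A|\geq 3$, and the escape-to-$\infty$ faces vanish because near $\infty$ the $\eta_i$ and $\omega_j$ pull back from $S^2$ via the Gauss map $G_M$, reducing to the standard parallelization case. The remaining principal faces ($|A|=2$) give sums that, after summing over all $(\Gamma,j)\in\Dc_n^c(\emptyset)$ with the weighting $\xi_n$, collapse to zero in $\Ac_n^c(\emptyset)$ via the Jacobi (IHX) and antisymmetry relations, exactly as in \cite{lesbook}.

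For the remaining independences I would apply parametrized versions of this variation argument. The space of metrics satisfying the condition near $\infty$ is convex, and the space of $p$-multisections agreeing with $f_p\circ\vst$ near $\infty$ is also path-connected; along any such path, a smooth family of propagating forms can be constructed, and Stokes on $C_{V(\Gamma)}(\check{M})\times[0,1]$ gives the desired equality. For $p$, observe that a $p$-multisection $\nu_X^p$ induces a $kp$-multisection via the quotient $UX^\perp_p\to UX^\perp_{kp}$, and that any $\U_p$-invariant $2$-form on $S^2$ is $\U_{kp}$-invariant; hence every propagating form for $(g,X,\nu_X^p)$ is one for $(g,X,\nu_X^{kp})$, and two arbitrary $p,p'$ are compared through a common multiple. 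Finally, a homotopy $(X_t)_{t\in[0,1]}$ of combings admits a common $p$ (as $\order(X_t)$ is locally constant), a parametrized $p$-multisection of the bundle $UX_t^\perp$ over $\check{M}\times[0,1]$, and a one-parameter family of propagating forms; the same boundary analysis produces $z_n(M,X_0)=z_n(M,X_1)$.

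The main obstacle I expect is the face-at-infinity analysis in the variation argument. Unlike the parallelization setting, the boundary restriction of a propagating form is dictated by the projection $p_{(X,\nu_X^p)}^*$, which is only locally well-defined and requires the $\U_p$-invariance to globalize; choosing the primitives $\eta_i$ so that their boundary restrictions remain compatible with this multisection structure, and so that the resulting boundary integrals on non-principal strata of $C_{V(\Gamma)}(\check{M})$ actually vanish, is the delicate point. Once this compatibility is engineered, the principal-face cancellation via IHX and antisymmetry is the classical algebraic mechanism, and the rest of the argument follows the Kuperberg--Thurston--Lescop template.
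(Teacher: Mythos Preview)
Your overall Stokes-plus-face-analysis outline matches the paper's strategy, but there is a genuine gap at the heart of the argument: the anomalous face. Your sentence ``a dimension count kills all contributions with $|A|\geq 3$'' is not correct, and more importantly it hides the one face that actually matters. Hidden faces with $3\le |A|<2n$ are not killed by a naive dimension count; they require the full Kontsevich/Lescop machinery (IHX-type cancellations, symmetry tricks), which the paper black-boxes by invoking \cite[Proposition~9.2]{lesbook}. The face $|A|=|V(\Gamma)|=2n$ (the \emph{anomalous} face $F(V(\Gamma))$) is precisely the face that in the Kuperberg--Thurston--Lescop template does \emph{not} cancel and produces the anomaly $\beta$; so ``the rest of the argument follows the template'' would leave you with a nonzero variation, not invariance. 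The new content of Theorem~\ref{thm_inv_combing} is that in the combing/multisection setting this face \emph{does} vanish. The mechanism is the paper's Lemma~\ref{lem_anomalous}: over any ball $B$ with a lift $\nu_X$, the boundary form on $[0,1]\times F(V(\Gamma))|_B$ is the pullback, via the local trivialisation $\psi_{(\nu_X,w_X,X)}$, of a form on $[0,1]\times \check S_{V(\Gamma)}(\R^3)$; that space has dimension $2|E(\Gamma)|-3$, so the degree-$2|E(\Gamma)|$ form vanishes identically. This factorisation through the fiber, with the base direction $B$ dropped, is exactly what the combing buys you and is the missing idea in your proposal.

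Two smaller corrections. First, your direction of inclusion is reversed in the $p$-independence step: $\U_p\subset \U_{kp}$, so a $\U_{kp}$-invariant form is $\U_p$-invariant, not the other way around. The correct statement (as in the paper's proof) is that a propagating form for $(g,X,\pi_k\circ\nu_X^p)$ is also one for $(g,X,\nu_X^p)$, and one compares arbitrary multiples of $\order(X)$ through $\order(X)$ itself, not through a common multiple. Second, the principal-face cancellation by IHX/AS does not cover $n=1$: there the only collision face is $|A|=2=2n$, which is simultaneously principal and anomalous, and no IHX relation applies; it vanishes only by the factorisation argument above.
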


We prove Theorem~\ref{thm_inv_combing} in Section~\ref{sec_proof_inv}. The integrals in Theorem~\ref{thm_inv_combing} converge because of the following results. There exist natural smooth compactifications $C_{V(\Gamma)}(M)$ with corners of the spaces $\check{C}_{V(\Gamma)}(\check{M})$ for any trivalent graph $\Gamma$. The maps $p_e$ for $e\in E(\Gamma)$ smoothly extend as maps $p_e\colon C_{V(\Gamma)}(M) \to C_2(M)$.

Let $\beta_n \in \Ac_n^c(\emptyset)$ denote the degree $n$ part of the $\beta$ anomaly, as in \cite[Definition 10.5]{lesbook}. The following theorem is proved in Section~\ref{sec_proof_var}.

\begin{thma}\label{thm_inv_var}
Let $[X]$ be a combing on $\check{M}$. For any $n\in\N$, we have
$$z_n(M,X)=z_n(M)+\frac{1}{4}p_1(X)\beta_n.$$
\end{thma}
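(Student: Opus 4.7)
The plan is to reduce Theorem~\ref{thm_inv_var} to the classical variation formula for parallelizations established in \cite{lesbook}, by comparing the combing-based and parallelization-based definitions in two steps.

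First, I would settle the special case where $[X]$ is the homotopy class of the first vector of some parallelization $\tau$ of $\check{M}$. In this situation the normalized second vector of $\tau$ is a nowhere-vanishing section of $UX^\perp$, so $e(UX^\perp)=0$ and $\order(X)=1$. Hence I may take $p=1$, and a $1$-multisection is an honest global section $\nu_X$ of $UX^\perp$. The frame $\tau'=(\nu_X,w_X,X)$ is then a parallelization of $\check{M}$, and its associated projection $p_{\tau'}\colon\partial C_2(M)\to S^2$ coincides on all of $\partial C_2(M)$ with the map $p_{(X,\nu_X^1)}$ of Definition~\ref{def_operator} (since $\U_1$-invariance is vacuous). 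Thus the propagating forms of $(C_2(M),g,X,\nu_X^1)$ are exactly the propagating forms of $(C_2(M),\tau')$, so $z_n(M,X)=z_n(M,\tau')$. The classical formula $z_n(M,\tau')=z_n(M)+\tfrac{1}{4}p_1(\tau')\beta_n$ combined with the identity $p_1(\tau')=p_1(X)$ recalled in the introduction from \cite{lescomb} settles this case.

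Second, to pass to an arbitrary combing $[X]$, I would establish the relative variation formula
\[
z_n(M,X)-z_n(M,X')=\tfrac{1}{4}\bigl(p_1(X)-p_1(X')\bigr)\beta_n
\]
for any two combings $[X],[X']$ on $\check{M}$. Granted this, choose any parallelization $\tau$ of $\check{M}$ (which exists by parallelizability of $\check{M}$) and apply the relative formula with $X'$ the first vector of $\tau$; combining with the special case above gives Theorem~\ref{thm_inv_var} in full generality. To prove the relative formula, I would pick auxiliary data $(g,p,\nu_X^p)$ and $(g',p',\nu_{X'}^{p'})$ together with families of propagating forms $(\Omega_i)$ and $(\Omega_i')$, and construct, on the cobordism $C_2(M)\times[0,1]$, closed $2$-forms $\tilde{\Omega}_i$ interpolating between them, whose restriction to $\partial C_2(M)\times[0,1]$ is prescribed by a chosen homotopy between the sphere-valued maps $p_{(X,\nu_X^p)}$ and $p_{(X',\nu_{X'}^{p'})}$ (replacing $p,p'$ by a common multiple to make the $\U$-invariance conditions compatible). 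Pulling back to $C_{V(\Gamma)}(M)\times[0,1]$ and applying Stokes expresses $z_n(M,X)-z_n(M,X')$ as a sum over faces of $\partial C_{V(\Gamma)}(M)\times[0,1]$.

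The main obstacle will be identifying the boundary contributions. The faces indexed by proper subsets of colliding vertices vanish by the antisymmetry and Jacobi relations, exactly as in the invariance proof for Theorem~\ref{thm_inv_combing}. The only surviving contribution is the principal anomalous face, where all vertices of $\Gamma$ collapse onto a single point of $\check{M}$ and the configuration space fibers over $U\check{M}\times[0,1]$. Over this face the integrand reduces, after summation over $\Dc_n^c(\emptyset)$ with the weight $\xi_n$, to $\beta_n$ multiplied by the degree of a sphere-valued map built from the interpolation of the two multisected combings on $U\check{M}$. The key calculation is to recognize this degree, via the definition of $p_1$ as a relative Pontrjagin number in \cite{lescomb}, as $\tfrac{1}{4}\bigl(p_1(X)-p_1(X')\bigr)$; this is where the rational invariant $p_1$ of combings enters nontrivially and is the computational heart of the argument.
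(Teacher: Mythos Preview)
Your first step—reducing to the parallelization case when $[X]$ extends to a frame—is correct and coincides with Proposition~\ref{prop_eq_para}. The gap is in the second step. When $\order(X)>1$ there is no global map $p_{(X,\nu_X^p)}\colon \partial C_2(M)\to S^2$: by Definition~\ref{def_operator} only the \emph{pullback operator} $p_{(X,\nu_X^p)}^*$ on $\U_p$-invariant forms is well defined, since the local projections $p_{(\nu_X,w_X,X)}$ attached to different lifts of $\nu_X^p$ differ by rotations in $\U_p$ and do not glue to a single sphere-valued map. Hence the ``chosen homotopy between the sphere-valued maps'' on which your interpolation rests does not exist. Without it the anomalous-face integrand has no reason to factor through a projection to $[0,1]\times\check{S}_{V(\Gamma)}(\R^3)$, so there is no ``degree'' to extract, and your identification of this purported degree with $\tfrac{1}{4}(p_1(X)-p_1(X'))$ is not a recognition but a separate theorem: in \cite{lescomb}, $p_1$ of a combing is a $4$-dimensional relative Pontrjagin number, not a mapping degree over $U\check{M}$.

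The paper avoids these obstructions by comparing $z_n(M,X)$ not with another combing but with a \emph{pseudo-parallelization} $\Tilde{\tau}$ built from $X$ (Section~\ref{sec_pseudo_par_combing}): outside a tubular neighbourhood of a knot $K$ Poincar\'e dual to $-e(UX^\perp)$ the multisection lifts to an honest frame, and over $N(K)$ the remaining frame is constructed explicitly. An interpolating boundary form is then produced so that, when $\slk(K)=0$, the anomalous-face contribution vanishes by a dimension count (Proposition~\ref{prop_pseudo_combing}); the equality $p_1(X)=p_1(\Tilde{\tau})$ then comes from the already-known $n=1$ case (Proposition~\ref{prop_eq_theta_inv}), and the general $n$ follows from Lescop's variation formula for pseudo-parallelizations (Theorem~\ref{thm_pseudo_par}). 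The case $\slk(K)\neq 0$ is reduced to the previous one by a connected-sum argument. None of this structure appears in your outline.
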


\begin{rquea} 
In general, we cannot define a preferred class of combings $X$ such that $p_1(X)= 0$.
 There exist
rational homology spheres $M$ such that $p_1(X)\neq 0$ for all combings $[X]$ of $\check{M}$. For example $p_1(X)$ is an odd integer for any combing $[X]$ of $\check{\R P^3}$. This is a consequence of Theorem 10 of Kirby and Melvin in \cite{km} and Theorem 1.4 of Lescop in \cite{lescomb}.
\end{rquea}

\subsection{A generalisation of Theorem~\ref{thm_inv_combing} using special multiframings}

Shimizu defined and studied multisections of bundles in \cite{shifra}. Multiframings of $M$ are multisections of the bundle of framings of $TM$.  Let us introduce a notion of a  $G$-multiframing of $\check{M}$ for a finite subgroup $G$ of $SO(3)$

\begin{defia}
 Let $G$ be a finite subgroup of $SO(3)$. Let $k\geq 1$ be an integer.  Let $(U^a)_{a\in A}$ be a covering of $\check{M}$. For any $a\in A$, let $(\tau^a_{i}\colon U^a \times \R^3 \to TU^a)_{i\in\{1,\dots,k\}}$ be a family of parallelizations of $U^a$ with the following properties.
 \begin{itemize}
  \item For any $a,b$ in $A$, there exists a permutation $\sigma^{ab}$ of $\{1,\dots,k\}$ such that $\tau^a_{\sigma^{ab}(i)}= \tau^b_{i}$ on $U^a \cap U^b$ for all $i\in\{1,\dots,k\}$.
  \item For any $a\in A$ and any $i,j\in \{1,\dots,k\}$, the map $(\tau^a_{j})^{-1} \circ \tau^a_{i}$ is equal to $\mbox{Id}_{U^a} \times g_{ij}$ for a fixed element $g_{ij}$ of $G$.
  \item Let $\tau^{\mbox{\scriptsize st}}\colon \check{V}_\infty\times\R^3 \to U\check{V}_\infty$ be the standard parallelization of $U\check{V}_\infty$. For any $a\in A$ and any $j\in \{1,\dots,k\}$, the restriction of the map $(\tau^a_{j})^{-1} \circ \tau^{\mbox{\scriptsize  st}}$ to $U^a \cap V_\infty$ is equal to $\mbox{Id}_{U^a} \times g_{\mbox{\scriptsize st},j}$ for a fixed element $g_{\mbox{\scriptsize st},j}$ of $G$.
 \end{itemize}
We say that $\Tilde{\tau}=\bigl(U^a,(\tau^a_{i})_{i\in\{1,\dots,k\}}\bigr)_{a\in A}$ is a $k$-fold $G$-framing of $\check{M}$.  A $G$-multiframing of $\check{M}$ is a $k$-fold $G$-framing of $\check{M}$ for some $k$.
\end{defia}

Let us outline the definition of an invariant $z$ of a rational homology sphere equipped with a $k$-fold $G$-framing $\Tilde{\tau}=\bigl(U^a,(\tau^a_{i})_{i\in\{1,\dots,k\}}\bigr)_{a\in A}$.  Define a map $p_{\Tilde{\tau}}^*$ from the space of $G$-invariant forms on $S^2$ to the space of forms on $\partial C_2(M)$ to be the average of the $k$ pull-back maps $(p_{\tau_i^a}^*)_{i\in\{1,\dots,k\}}$ on each $U^a$. A \emph{propagating form of $(C_2(M),\Tilde{\tau})$} is a propagating form of $C_2(M)$ that restricts to $\partial C_2(M)$ as $p_{\Tilde{\tau}}^*(\omega)$ for some $G$-invariant form $\omega$ of volume one on $S^2$.

We omit the proof of the following theorem. It is similar to the proof of Theorem~\ref{thm_inv_combing}.

\begin{thma}\label{thm_inv_multiframing}
Let $G$ be a finite subgroup of $SO(3)$. Let $k\geq 1$ be an integer. Let $\Tilde{\tau}=\bigl(U^a,(\tau^a_{i})_{i\in\{1,\dots,k\}}\bigr)_{a\in A}$ be a $k$-fold $G$-framing of $\check{M}$. Let $(\omega_i)_{i\in\{1,\dots,3n\}}$ be a family of $3n$ propagating forms of $(C_2(M),\Tilde{\tau})$. The element
 $$z_n(M,(\omega_i)_{i\in\{1,\dots,3n\}})=\xi_n  \sum_{(\Gamma,j)\in \Dc_n^c(\emptyset)}  I(M,\Gamma,j,(\omega_i)_{i\in\{1,\dots,3n\}})[\Gamma]$$
 of $\Ac_n^c(\emptyset)$ does not depend on the choice of the family $(\omega_i)_{i\in\{1,\dots,3n\}}$ of propagating forms of $(C_2(M),\Tilde{\tau})$. We denote this element by $z_n(M,\Tilde{\tau})$.
\end{thma}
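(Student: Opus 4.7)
The plan is to mimic the proof of Theorem~\ref{thm_inv_combing}, replacing the role of the $\U_p$-invariant pull-back $p_{(X,\nu_X^p)}^*$ with the $G$-invariant pull-back $p_{\Tilde{\tau}}^*$. By a telescoping argument, it suffices to show that if two admissible families $(\omega_i)$ and $(\omega_i')$ differ in a single index $j$, then the sums
$$\xi_n\sum_{(\Gamma,j)\in\Dc_n^c(\emptyset)}I(M,\Gamma,j,(\omega_i))[\Gamma]\quad\text{and}\quad\xi_n\sum_{(\Gamma,j)\in\Dc_n^c(\emptyset)}I(M,\Gamma,j,(\omega_i'))[\Gamma]$$
agree in $\Ac_n^c(\emptyset)$.

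Next I would produce a primitive linking $\omega_j$ and $\omega_j'$. On $\partial C_2(M)$, these forms restrict as $p_{\Tilde{\tau}}^*(\omega)$ and $p_{\Tilde{\tau}}^*(\omega')$ for $G$-invariant volume-one forms $\omega,\omega'$ on $S^2$. Since $H^2(S^2;\R)$ is one-dimensional, there exists a $1$-form $\alpha$ on $S^2$ with $d\alpha=\omega-\omega'$, and since $G$ is finite, averaging $\alpha$ over $G$ yields a $G$-invariant primitive. Hence $p_{\Tilde{\tau}}^*(\alpha)$ is a well-defined $1$-form on $\partial C_2(M)$, and the vanishing of $H^2(C_2(M),\partial C_2(M);\R)$ (invoked already in the existence of propagating forms) extends it to a $1$-form $\eta$ on $C_2(M)$ with $d\eta=\omega_j-\omega_j'$. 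Applying Stokes' theorem, each difference $I(M,\Gamma,j,(\omega_i))-I(M,\Gamma,j,(\omega_i'))$ becomes an integral over $\partial C_{V(\Gamma)}(M)$ of $p_{e_j}^*(\eta)\wedge\bigwedge_{e\neq e_j}p_e^*(\omega_{j(e)})$, where $e_j$ is the edge labelled by $j$.

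I would then decompose $\partial C_{V(\Gamma)}(M)$ into faces $F_A$ associated to subsets $A\subsetneq V(\Gamma)$ of colliding vertices together with the face at infinity, and run the standard analysis of Kontsevich--Kuperberg--Thurston--Lescop. Faces with $|A|\geq 4$ vanish by dimension count. The principal faces $|A|=2$ (edge-contractions) and $|A|=3$, once summed over $(\Gamma,j)\in\Dc_n^c(\emptyset)$ with weight $\xi_n[\Gamma]$, cancel through the antisymmetry and Jacobi relations of Equation~\ref{eq_relation}, exactly as in the combing proof. Faces involving a subset escaping to infinity contribute integrals of products of $p_{\Tilde{\tau}}^*$-pull-backs of $G$-invariant volume-one forms on $S^2$, which are well-defined globally precisely because of $G$-invariance and the averaging built into $p_{\Tilde{\tau}}^*$; these contributions are then seen to vanish via the same anomaly-cancellation arguments used in the parallelization case.

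The main obstacle is ensuring that the averaging across the charts $(U^a)_{a\in A}$ and across the $k$ parallelizations $(\tau^a_i)_{i\in\{1,\dots,k\}}$ is transparent to the standard principal-face analysis. Concretely, on each face $F_A$ the integrand, after unwinding the definition of $p_{\Tilde{\tau}}^*$, becomes an average of $k^{|E(\Gamma)|-1}$ terms indexed by the parallelizations; the gluing relations $\tau^a_{\sigma^{ab}(i)}=\tau^b_i$ on overlaps and the $G$-equivariance of the transition functions guarantee that each term is globally well-defined and that the sum is the average of integrals which individually reduce to the standard parallelization case for an arbitrary choice of $\tau^a_i$. Once this reduction is made, the invariance of $z_n$ follows from the analogous statement for ordinary parallelizations established by Lescop in \cite{lesbook}, so the only genuinely new input needed for Theorem~\ref{thm_inv_multiframing} is the verification that the boundary at infinity and the AS/Jacobi cancellations commute with the averaging operators, which is a routine but somewhat lengthy bookkeeping exercise.
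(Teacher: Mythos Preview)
The paper omits the proof of Theorem~\ref{thm_inv_multiframing}, saying only that it is ``similar to the proof of Theorem~\ref{thm_inv_combing}.'' That proof (Section~\ref{sec_proof_inv}) does \emph{not} proceed by a telescoping primitive argument. Instead it interpolates all $3n$ propagators simultaneously over $[0,1]\times C_2(M)$: one builds a $G$-invariant closed $2$-form $\Tilde\omega_{S^2}$ on $[0,1]\times S^2$ (by averaging a primitive of the difference, exactly as you do), pulls it back by $\mathrm{Id}_{[0,1]}\times p_{\Tilde\tau}$ on $[0,1]\times\partial C_2(M)$, and extends to closed forms on $[0,1]\times C_2(M)$. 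Stokes then expresses $z_n(1)-z_n(0)$ as a sum over codimension-one faces, and the black box \cite[Proposition~9.2]{lesbook} (Proposition~\ref{prop_les} here) reduces this sum to the anomalous face $F(V(\Gamma))$ alone. The key step is then the analogue of Lemma~\ref{lem_anomalous}: on each chart $U^a$, since the forms on $S^2$ are $G$-invariant and the local frames differ by elements of $G$, the integrand on $[0,1]\times F(V(\Gamma))|_{U^a}$ factors through the projection to $[0,1]\times \check S_{V(\Gamma)}(\R^3)$, whose dimension $2|E(\Gamma)|-3$ is strictly less than the degree $2|E(\Gamma)|$ of the integrand, so it vanishes.

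Your route is genuinely different and, in outline, also viable, but your face analysis has real gaps. Hidden faces with $|A|\ge 3$, $A\neq V(\Gamma)$ do \emph{not} vanish by a naive dimension count, and the $|A|=3$ faces are not disposed of by the Jacobi relation; they require the Kontsevich-type involution arguments encapsulated in \cite[Proposition~9.2]{lesbook}, which the paper simply cites rather than reproving. Your treatment of the anomalous face is also misleading: in the genuine ``parallelization case'' where $\tau$ varies, the anomalous face does \emph{not} vanish (it produces the $\beta$ anomaly). What makes it vanish here is precisely that $\Tilde\tau$ is fixed and only the propagators change, so on $F(V(\Gamma))$ all factors of the integrand pull back from $\check S_{V(\Gamma)}(\R^3)$ through any one of the local parallelizations $\tau_i^a$, and the dimension drops. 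Finally, a minor point: the existence of propagating forms uses $H^3(C_2(M),\partial C_2(M);\R)=0$, not $H^2$; you do need $H^2(C_2(M),\partial C_2(M);\R)=0$ for your primitive extension, but that is a separate (and also true) fact. If you keep your approach, you should either invoke \cite[Proposition~9.2]{lesbook} in place of the hand-waved face analysis, or carry out the hidden-face arguments in full; the paper's $[0,1]$-interpolation avoids all of this bookkeeping.
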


A combing $[X]$ on $\check{M}$ and a $p$-multisection $\nu_X^p$ of $X$ induce a $p$-fold
$(\U_p \subset SO(2) \subset SO(3))$-framing
$\Tilde{\tau}(X,\nu_X^p)$ of $\check{M}$, where $SO(2)$ is seen as the subgroup of $SO(3)$ of rotations about $(0,0,1)$. We have $p_{\Tilde{\tau}(X,\nu_X^p)}^*=p_{(X,\nu_X^p)}^*$. Thus we obtain $z(M,\Tilde{\tau}(X,\nu_X^p))=z(M,X)$.

In \cite{shifra}, Shimizu defined a $\Q$-valued map $p_1$ on the homotopy classes of multiframings on a closed oriented $3$-manifold. We ask the following natural questions. 

\begin{questions}
Does Shimizu's definition of $p_1$ extend to multiframings of $\check{M}$~? If this is the case, do we have $p_1(X)=p_1(\Tilde{\tau}(X,\nu_X^p))$ ? When $G$ is a finite subgroup of $SO(3)$ and $\Tilde{\tau}$ is a $G$-multiframing of $\check{M}$, does the variation formula $z(M,\Tilde{\tau})=z(M)+1/4 p_1(\Tilde{\tau})\beta$ hold ? 
\end{questions}

\subsection{A dual version of Theorem~\ref{thm_inv_combing}}
\label{sec_dual_main_result}

In this section, we state Theorem~\ref{thm_inv_combing_dual}, which is a dual version of Theorem~\ref{thm_inv_combing}.
 
 \begin{defia}
 Let $q\in S^2$. Define a $3$-dimensional cycle $C(M,g,X,\nu^p_{X};q)$ of $\partial C_2(M)$ as follows. Let $B$ be an embedded ball in $\check{M}$. Let $\nu_{X}$ be a lift of $\nu^p_{X}$ over $B$.
 Set $$C(M,g,X,\nu^p_{X};q) \cap U\check{M}|_{B} =\frac{1}{p} \sum_{k\in \{1,\dots,p\}} \bigl(R_{\frac{2k\pi}{p}} \circ p_{(\nu_{X},w_X,X)}\bigr)^{-1} (q).$$

 Set $$C(M,g,X,\nu^p_{X};q) \cap (\partial C_2(M) \setminus U\check{M}) =\frac{1}{p} \sum_{k\in \{1,\dots,p\}} \bigl(R_{\frac{2k\pi}{p}} \circ G_M \bigr)^{-1} (q).$$
\end{defia}

\begin{defia}
 A \emph{propagating chain of $(C_2(M),g,X,\nu^p_{X})$} is a four-dimensional relative cycle $P$ of $(C_2(M),\partial C_2(M))$ such that 
 $\partial P = C(M,g,X,\nu^p_{X};q)$ for some $q\in S^2$.\footnote{In this article, an $n$-dimensional relative cycle of a smooth manifold $X$ with boundary $\partial X$ is a formal combination $P$ of smooth oriented submanifolds of dimension $n$  of $X$ with rational coefficients such that $\partial P$ is a formal combination of smooth oriented submanifolds of dimension $n-1$  of $\partial X$ with rational coefficients.}
\end{defia}

Propagating chains of $(C_2(M),g,X,\nu^p_{X})$ exist because $H_3(C_2(M);\R)=0$. As in \cite[Lemma 11.4]{lesbook}, one can show that there exists a family $(P_i)_{i\{1,\dots,3n\}}$ of propagating chains of $(C_2(M),g,X,\nu^p_{X})$ such that for any $(\Gamma,j)\in \Dc_n^c(\emptyset)$ the intersection
$\bigcap_{e\in E(\Gamma)} P_{j(e)}$
is transverse. Say that such a family is in \emph{general position}. See also \cite[Definition 11.3]{lesbook}. For such a family and such a labelled graph $(\Gamma,j)$ equipped with a vertex-orientation $o(\Gamma)$, the algebraic intersection of the $P_{j(e)},e\in E(\Gamma)$ on $C_{V(\Gamma)}(M)$ is denoted by
$$I(M,\Gamma,j,(P_i)_{i\in\{1,\dots,3n\}})[\Gamma]= I(M,\Gamma,o(\Gamma),j,(P_i)_{i\in\{1,\dots,3n\}})[\Gamma,o(\Gamma)].$$

\begin{thma}\label{thm_inv_combing_dual}
Let $[X],g,p,\nu^p_{g,X}$ be as in Theorem~\ref{thm_inv_combing}. Let $(P_i)_{i\in\{1,\dots,3n\}}$ be a family of $3n$ propagating chains of $(C_2(M),g,X, \nu^p_{X})$ in general position. We have
 $$z_n(M,X)=\xi_n \sum_{(\Gamma,j)\in \Dc_n^c(\emptyset)} I(M,\Gamma,j,(P_i)_{i\in\{1,\dots,3n\}})[\Gamma].$$
\end{thma}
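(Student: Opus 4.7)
The plan is to reduce Theorem~\ref{thm_inv_combing_dual} to Theorem~\ref{thm_inv_combing} by constructing, for each propagating chain $P_i$ in the given family, a propagating form $\Omega_i$ of $(C_2(M),g,X,\nu^p_X)$ that represents the Poincaré dual of $P_i$ and is supported in an arbitrarily thin neighbourhood $N_{\varepsilon_i}(P_i)$. Given $\partial P_i = C(M,g,X,\nu^p_X;q_i)$ for some $q_i\in S^2$, I first pick a $\U_p$-invariant volume-one $2$-form $\omega_{q_i}$ on $S^2$ concentrated near the $\U_p$-orbit of $q_i$; such a form is obtained by averaging a volume-one bump form peaked at $q_i$ over $\U_p$. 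By the very definition of $C(M,g,X,\nu^p_X;q_i)$ and of the pull-back operator $p_{(X,\nu_X^p)}^*$, the closed form $p_{(X,\nu_X^p)}^*(\omega_{q_i})$ on $\partial C_2(M)$ is Poincaré dual to $\partial P_i$ in $\partial C_2(M)$ and supported in an arbitrarily small neighbourhood of $\partial P_i$.

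Next I extend this boundary form to a closed $2$-form $\Omega_i$ on $C_2(M)$ supported in $N_{\varepsilon_i}(P_i)$ and representing the relative Poincaré dual of $P_i$. On a collar of $\partial C_2(M)$ I transport $p_{(X,\nu_X^p)}^*(\omega_{q_i})$ inward along a collar retraction and cut it off inside a thin tubular neighbourhood of $\partial P_i\subset P_i$. Extending across the interior amounts to representing the relative Poincaré dual of $P_i$ by a thinly supported closed $2$-form, which is possible because the relevant obstruction lies in $H^3(C_2(M),\partial C_2(M);\R)$, which is trivial as noted right after Definition~\ref{def_propa_combing}. A standard bump-function refinement yields support in $N_{\varepsilon_i}(P_i)$. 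By construction, $\Omega_i$ is a propagating form of $(C_2(M),g,X,\nu^p_X)$ in the sense of Definition~\ref{def_propa_combing}.

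Applying Theorem~\ref{thm_inv_combing} to the family $(\Omega_i)_{i\in\{1,\dots,3n\}}$ yields
$$z_n(M,X) = \xi_n \sum_{(\Gamma,j)\in \Dc_n^c(\emptyset)} I(M,\Gamma,j,(\Omega_i)_{i\in\{1,\dots,3n\}})[\Gamma].$$
To conclude, I will show that $I(M,\Gamma,j,(\Omega_i)_i)$ equals the algebraic intersection number $I(M,\Gamma,j,(P_i)_i)$ for each $(\Gamma,j)\in\Dc_n^c(\emptyset)$. By the general position hypothesis, $\bigcap_{e\in E(\Gamma)} p_e^{-1}(P_{j(e)})$ is a finite set of transverse intersection points inside $\check{C}_{V(\Gamma)}(\check{M})$ (dimension count: $6n - 2\cdot 3n = 0$). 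If the $\varepsilon_i$ are chosen small enough, the integrand $\bigwedge_e p_e^*(\Omega_{j(e)})$ is supported in small disjoint balls around these transverse points, and on each such ball the integral computes the local intersection sign by the usual Fubini-type argument for Poincaré duals supported in thin tubular neighbourhoods.

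The main obstacle will be this final localisation step. Two issues require careful handling: first, checking that no stray contribution comes from the boundary strata of the Fulton--MacPherson compactification $C_{V(\Gamma)}(M)$, which forces the notion of general position to include transversality at $\partial C_2(M)$ and vanishing estimates on low-codimension faces analogous to those used in the proof of Theorem~\ref{thm_inv_combing}; second, matching the sign conventions coming from the vertex-orientation-induced orientation of $\check{C}_{V(\Gamma)}(\check{M})$, the co-orientations of the $P_i$ in $C_2(M)$, and the edge-orientations carried by $(\Gamma,j)$. These technical points are essentially those treated in \cite[Lemma~11.4 and the surrounding discussion]{lesbook}, which I would follow closely rather than reprove from scratch.
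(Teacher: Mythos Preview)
The paper does not actually supply a proof of Theorem~\ref{thm_inv_combing_dual}: it states the result in Section~\ref{sec_dual_main_result} and moves on, the only hint being the reference to \cite[Lemma~11.4]{lesbook} just before the statement. Your proposal---construct thinly supported Poincaré-dual propagating forms $\Omega_i$ for the $P_i$, invoke Theorem~\ref{thm_inv_combing}, and localise the resulting integrals to the transverse intersection points---is exactly the standard duality argument developed in \cite[Chapter~11]{lesbook}, and is almost certainly what the author has in mind. So there is nothing to compare: your sketch \emph{is} the expected proof, and you correctly flag that the delicate parts (boundary-face contributions, sign matching) are handled by following \cite{lesbook} rather than being redone here.

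One small caveat worth tightening in your write-up: the ``general position'' hypothesis in the paper (cf.\ \cite[Definition~11.3]{lesbook}) already includes transversality on the compactified space $C_{V(\Gamma)}(M)$, not just on the open part, so the empty intersection with boundary strata is part of the hypothesis rather than something to be argued. With that in hand, the localisation argument goes through without needing separate vanishing estimates on faces.
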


\subsection{Organisation of the paper}

The article is organised as follows. In Section~\ref{sec_compare}, we compare the invariant $z(M,[X])$ with other existing invariants. Assuming Theorem~\ref{thm_inv_combing}, we prove Theorem~\ref{thm_inv_var} for $n=1$. In Section~\ref{sec_multisection} we study multisections of $UX^\perp$ in more details. Sections \ref{sec_proof_inv} and \ref{sec_proof_var} are respectively devoted to the proofs of Theorems \ref{thm_inv_combing} and \ref{thm_inv_var}.

\section{Relation with other invariants}

\label{sec_compare}

In this section, we assume that Theorem~\ref{thm_inv_combing} is proven. In Section~\ref{sec_relate}, we recall Lescop's definition of the invariant $z$ of a rational homology sphere  from \cite{lesbook}. In Section~\ref{sec_relate_theta}, we define the Theta invariant of a rational homology sphere equipped with a combing following \cite{lescomb}. We prove Theorem~\ref{thm_inv_var} in the special case where $n=1$. In \cite{shimizu}, Shimizu defined an invariant $\Tilde{z}$ of rational homology spheres, using the additional datum of a section of the tangent bundle to the manifold that is transverse to the zero section. We compare the two constructions in Section~\ref{sec_relate_Shimizu}.

\subsection{The invariant \texorpdfstring{$z$}{z} of a rational homology sphere}
\label{sec_relate}

The following theorem is a particular case of \cite[Theorem 7.20]{lesbook}.

\begin{thma}\label{thm_inv_tau}
Let $n\geq 1$ be an integer. Let $\tau$ be a parallelization of $\check{M}$. Let $(\omega_i)_{i\in\{1,\dots,3n\}}$ be a family of $3n$ propagating forms of $(C_2(M),\tau)$. The element
 $$z_n(M,(\omega_i)_{i\in\{1,\dots,3n\}})=\xi_n\sum_{(\Gamma,j)\in \Dc_n^c(\emptyset)}  I(M,\Gamma,j,(\omega_i)_{i\in\{1,\dots,3n\}})[\Gamma]$$
 of $\Ac_n^c(\emptyset)$ does not depend on the choice of the family $(\omega_i)_{i\in\{1,\dots,3n\}}$ of propagating forms of $(C_2(M),\tau)$. It only depends on the homotopy class of $\tau$ and on the orientation-preserving diffeomorphism type of $M$. We denote this element by $z_n(M,\tau)$.
 
 The element $z(M,\tau)-p_1(\tau)/4 \beta$ does not depend on the chosen parallelization $\tau$. Define $z(M)$ as $z(M)=z(M,\tau)-p_1(\tau)/4 \beta$.
\end{thma}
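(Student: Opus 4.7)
The plan is to adapt the configuration-space-integral strategy of Kontsevich, Kuperberg--Thurston and Lescop \cite{lesbook}. The argument proceeds in two stages: first, for a fixed homotopy class of $\tau$, show that $z_n(M,(\omega_i))$ does not depend on the choice of propagating forms of $(C_2(M),\tau)$, and that it does not change under a homotopy of $\tau$; second, control the variation of $z_n(M,\tau)$ when the homotopy class of $\tau$ changes, and show that it equals $(p_1(\tau')-p_1(\tau))\beta/4$, so that $z(M,\tau)-p_1(\tau)\beta/4$ is independent of $\tau$.

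For the first stage, given two families $(\omega_i)$ and $(\omega_i')$ of propagating forms of $(C_2(M),\tau)$, the difference $\omega_i-\omega_i'$ is closed and restricts to $\partial C_2(M)$ as the pull-back by $p_\tau$ of an exact $2$-form on $S^2$; hence we can write $\omega_i-\omega_i'=d\eta_i$ for a $1$-form $\eta_i$ on $C_2(M)$ whose restriction to $\partial C_2(M)$ is the $p_\tau$-pull-back of a primitive on $S^2$. Changing one index at a time and applying Stokes' theorem on each compactified configuration space $C_{V(\Gamma)}(M)$, the variation of $z_n$ splits as a sum of boundary integrals over the codimension-one faces of $C_{V(\Gamma)}(M)$. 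These faces are the principal faces, where exactly two vertices of $\Gamma$ collapse; the hidden faces, where more than two vertices collapse, or where a strict subset of vertices escapes to $\infty$; and the face where the whole configuration is anchored at $\infty$, which contributes nothing thanks to the boundary condition imposed by $\tau$ and the condition on $\eta_i$ at $\partial C_2(M)$. The principal faces, summed over $\Dc_n^c(\emptyset)$, produce exactly the Jacobi and antisymmetry combinations killed in $\A_n^c(\emptyset)$. The hidden faces vanish by the standard symmetry and degree-of-map arguments of Kuperberg--Thurston and Lescop. A parallel Stokes argument on $[0,1]\times C_{V(\Gamma)}(M)$ yields invariance of $z_n(M,\tau)$ under a homotopy of $\tau$.

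For the second stage, connect two parallelizations $\tau,\tau'$ in different homotopy classes by a generic path $(\tau_t)_{t\in[0,1]}$ of sections of the bundle of positive framings of $T\check{M}$. Running the same Stokes argument on $[0,1]\times C_{V(\Gamma)}(M)$, the surviving boundary contribution localizes on the face $[0,1]\times U\check{M}$ as a universal integral on the $3$-sphere of framings of $\R^3$ traced out by $(\tau_t)$. This universal integral is by definition the degree-$n$ part $\beta_n$ of the anomaly, and its global coefficient recovers $(p_1(\tau')-p_1(\tau))/4$ through the characterization of $p_1$ as a relative Pontrjagin number of the homotopy $(\tau_t)$. Hence $z(M,\tau)-p_1(\tau)\beta/4$ is invariant and defines $z(M)$.

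The main technical obstacle is the fine boundary analysis in the first stage; in particular, the vanishing of the hidden faces is not a consequence of naive dimension counts and requires identifying each contribution with a universal integral that vanishes by symmetry of the collapsed subgraph, or by an orientation-reversing automorphism argument. The correct normalization of $\beta$ so that the factor $1/4$ appears in the second stage also requires care, and is the part one has to match against the existing literature (\cite[Chapter 10]{lesbook}).
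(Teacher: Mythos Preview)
The paper does not actually prove this theorem: it is stated as a particular case of \cite[Theorem~7.20]{lesbook} and quoted as a known result. So there is no ``paper's own proof'' to compare against; your outline is a sketch of the argument carried out in that reference, and the overall two-stage strategy (Stokes on $C_{V(\Gamma)}(M)$ for fixed~$\tau$, then Stokes on $[0,1]\times C_{V(\Gamma)}(M)$ to compare two parallelizations, with the surviving contribution coming from the anomalous face) is indeed the standard one.

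One genuine imprecision in your second stage: you write ``connect two parallelizations $\tau,\tau'$ in different homotopy classes by a generic path $(\tau_t)$ of sections of the bundle of positive framings''. Such a path does not exist if $\tau$ and $\tau'$ are not homotopic --- that is exactly what ``different homotopy classes'' means. What one actually does is choose closed $2$-forms on $[0,1]\times C_2(M)$ restricting to propagating forms of $(C_2(M),\tau)$ and $(C_2(M),\tau')$ on the ends and to a prescribed form on $[0,1]\times\partial C_2(M)$ built from a smooth map $G\colon[0,1]\times\check{M}\to SO(3)$ with $G_0=\mathrm{id}$ and $G_1=\tau^{-1}\tau'$ (such a $G$ exists since $SO(3)$ is connected; it need not be constant in $t$ along the way). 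The anomalous-face contribution then factors through the homotopy class of $G$ and produces $\beta$ with the coefficient $(p_1(\tau')-p_1(\tau))/4$. Also, ``the $3$-sphere of framings of $\R^3$'' should be $SO(3)\cong\mathbb{R}P^3$, not $S^3$; the $S^3$ that appears is its double cover, which does enter when one unwinds the definition of $p_1$. With these corrections your sketch matches the literature.
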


Assuming Theorem~\ref{thm_inv_combing}, the next proposition follows easily.

\begin{propa} \label{prop_eq_para}
 Let $[X]$ be a combing on $\check{M}$ that extends to a parallelization $\tau$ of $\check{M}$. Then we have
 $$z_n(M,X)=z_n(M,\tau).$$
\end{propa}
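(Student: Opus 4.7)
The plan is to reduce the hypotheses of Theorem~\ref{thm_inv_combing} applied to $[X]$ to those of Theorem~\ref{thm_inv_tau} applied to $\tau$, by choosing the auxiliary data on the combing side to come directly from the parallelization, so that both invariants are computed by the same integrals.

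First, I would replace $X$ by a homotopic representative that is a coordinate vector of $\tau$, which is legitimate because $z_n(M,X)$ depends only on the homotopy class $[X]$. I would then choose a Riemannian metric $g$ on $\check{M}$ for which $\tau$ is orthonormal and which coincides with the Euclidean metric on $\check{V}_\infty$; this is possible since $\tau$ already coincides with the standard parallelization on $\check{V}_\infty$. Writing $\tau=(\nu,w,X)$ as an orthonormal frame (with $X$ as the third vector, matching Definition~\ref{def_proj_combing}), the section $\nu$ is a global nowhere vanishing section of $X^\perp$, so the Euler class $e(UX^\perp)$ vanishes and $\order(X)=1$. I would therefore take $p=1$, which makes $\U_p=\{1\}$ trivial, reduces the $\U_p$-invariance condition on $S^2$ to no condition at all, and turns a $1$-multisection into an ordinary global section of $UX^\perp$. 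Set $\nu_X^1=\nu$.

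With these choices, the projection $p_{(\nu_X,w_X,X)}$ of Definition~\ref{def_proj_combing} is globally defined on all of $U\check{M}$ and, by direct inspection of the two formulas, coincides on $U\check{M}$ with the map $p_\tau$ attached to $\tau$: both send the frame $(\nu,w,X)$ pointwise to the standard basis of $\R^3$. On $\partial C_2(M)\setminus U\check{M}$ both $p^*_{(X,\nu_X^1)}$ and $p^*_\tau$ are defined as $G_M^*$. Consequently the operator of Definition~\ref{def_operator} satisfies $p^*_{(X,\nu_X^1)}=p^*_\tau$ on $\partial C_2(M)$, and Definitions~\ref{def_propa_combing} select exactly the same set of closed $2$-forms on $C_2(M)$ as the notion of propagating form of $(C_2(M),\tau)$ used in Theorem~\ref{thm_inv_tau}.

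Finally, I would pick any family $(\omega_i)_{i\in\{1,\dots,3n\}}$ of propagating forms of $(C_2(M),\tau)$; by the previous step, it is simultaneously a family of propagating forms of $(C_2(M),g,X,\nu_X^1)$. Applying Theorem~\ref{thm_inv_combing} with these choices expresses $z_n(M,X)$ as $\xi_n\sum_{(\Gamma,j)}I(M,\Gamma,j,(\omega_i))[\Gamma]$, while Theorem~\ref{thm_inv_tau} expresses $z_n(M,\tau)$ as the same sum. The desired equality follows. The only delicate step is the identification $p_{(\nu,w,X)}=p_\tau$ on $U\check{M}$; this is really bookkeeping about how each map is built from the same orthonormal frame $\tau=(\nu,w,X)$, and, once set up with matching conventions, is immediate from the defining formula $p_\tau([\tau(m,v)])=v/\|v\|$.
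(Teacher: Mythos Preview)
Your proof is correct and follows essentially the same approach as the paper's: take $X=\tau(\cdot,(0,0,1))$, choose the Riemannian metric induced by $\tau$, set $p=1$ with $\nu_X^1=\tau(\cdot,(1,0,0))$, and observe that the resulting propagating forms of $(C_2(M),g,X,\nu_X^1)$ coincide with those of $(C_2(M),\tau)$, so the two invariants are computed by the same integrals. You spell out a few more details (the vanishing of the Euler class, the explicit identification $p_{(\nu,w,X)}=p_\tau$) than the paper does, but the argument is the same.
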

\begin{proof}
 Assume that $X=\tau(\cdot,(0,0,1))$. Let $g$ be the Riemannian metric on $\check{M}$ which restricts to the standard metric on $\R^3$ on each fiber, with respect to the identification given by $\tau$.  Let $\nu_{X}$ denote the ($1$-multi)section $\tau(\cdot,(1,0,0))$ of $UX^\perp$. Then a propagating form of $(C_2(M),g,X,\nu_{X})$ is also a propagating form of $(C_2(M),\tau)$. The result follows.
\end{proof}
Theorem~\ref{thm_inv_tau} and Proposition~\ref{prop_eq_para} imply Theorem~\ref{thm_inv_var} when $X$ extends to a parallelization $\check{M}$, which always holds when $M$ is an integer homology sphere.

\subsection{The Theta invariant of a rational homology sphere equipped with a combing}
\label{sec_relate_theta}

In Theorem~\ref{thm_theta_combing}, we give an alternative definition for the invariant $\Theta$ of rational homology spheres equipped with a combing studied in \cite{lescomb}. We prove Theorem~\ref{thm_inv_var} when $n=1$ as a direct consequence of Theorem 5.3 of Lescop in \cite{lescomb}. See Proposition~\ref{prop_eq_theta_inv}.

We define a preferred generator of $H_2(C_2(M);\Q)$ as follows. Orient $U\check{M}$ as part of the boundary of $C_2(M)$. Orient the $S^2$ fibers of $U\check{M}$ so that this orientation followed by the orientation of $\check{M}$ is the orientation of $U\check{M}$. Our preferred generator of $H_2(C_2(M);\Q)$ is the class $[S]$ of a fiber $S$ of $U\check{M}$. For any propagating chain $\mathcal{P}$ of $C_2(M)$, we have $\langle \mathcal{P},[S]\rangle_{C_2(M)}=1$.

\begin{thma}\label{thm_theta_combing}
   Let $[X]$ be a combing on $\check{M}$. Let $p\in\N\setminus\{0\}$ be a multiple of $\order(X)$. Let $\nu_X^p$ be a $p$-multisection of $UX^\perp$. Let $q_a,q_b$ be two points in $S^2$  such that $q_b$ is not in the orbit of $q_a$ for the $\U_p$ action on $S^2$. Let $P_a$ be a propagating chain of $(C_2(M),g,X,\nu_X^p)$ such that $\partial \mathcal{P}_a = C(M,g,X,\nu_X^p;q_a)$. Let $P_b$ be a propagating chain of $(C_2(M),g,X,\nu_X^p)$ such that $\partial \mathcal{P}_b = C(M,g,X,\nu_X^p;q_b)$. We can choose $\mathcal{P}_b$ so that $\mathcal{P}_a$ and $\mathcal{P}_b$ intertersect transversely. Then $\mathcal{P}_a \cap \mathcal{P}_{b}$  is a $2$-dimensional cycle of $C_2(M)$. Its homology class is independent on the chosen propagating chains. We denote it by $\Theta(M,X)[S]$. 
\end{thma}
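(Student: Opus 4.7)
The plan is to proceed in three steps: arrange transversality of $\mathcal{P}_a$ and $\mathcal{P}_b$, show that the resulting $2$-dimensional intersection is a closed cycle, and verify that its homology class is independent of the chain choices. For transversality, start from any propagating chain $\mathcal{P}_b$ with $\partial \mathcal{P}_b = C(M,g,X,\nu_X^p;q_b)$ and perturb it by a generic ambient isotopy of $C_2(M)$ supported in the interior. Such an isotopy preserves $\partial \mathcal{P}_b$, and by Sard's theorem a generic choice makes $\mathcal{P}_b$ transverse to $\mathcal{P}_a$.

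To see that $\mathcal{P}_a \cap \mathcal{P}_b$ has empty boundary, I would expand $\partial(\mathcal{P}_a \cap \mathcal{P}_b) = (\partial \mathcal{P}_a \cap \mathcal{P}_b) \pm (\mathcal{P}_a \cap \partial \mathcal{P}_b)$. Each chain meets $\partial C_2(M)$ only along its own boundary, so both terms reduce to points of $\partial \mathcal{P}_a \cap \partial \mathcal{P}_b$. On $U\check{M}|_B$ for a local lift $\nu_X$, such a point would have to project via $p_{(\nu_X,w_X,X)}$ to a point of $\U_p \cdot q_a$ and simultaneously to a point of $\U_p \cdot q_b$, contradicting the hypothesis $q_b \notin \U_p \cdot q_a$. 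The same argument applied to $G_M$ handles $\partial C_2(M) \setminus U\check{M}$.

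For the homology class, note that $H_2(C_2(M);\Q) = \Q[S]$ is one-dimensional and that Poincaré--Lefschetz duality makes the intersection pairing with $H_4(C_2(M), \partial C_2(M); \Q) \cong \Q$ non-degenerate, the latter being generated by the class of any propagating chain. I would choose a third propagating chain $\mathcal{P}_c$ whose boundary $C(M,g,X,\nu_X^p;q_c)$ uses a point $q_c$ with $\U_p$-orbit disjoint from both $\U_p \cdot q_a$ and $\U_p \cdot q_b$, in general position with $\mathcal{P}_a$ and $\mathcal{P}_b$. The triple intersection $\mathcal{P}_a \cap \mathcal{P}_b \cap \mathcal{P}_c$ is then a $0$-cycle with algebraic count $N$, which equals $\langle [\mathcal{P}_a \cap \mathcal{P}_b], [\mathcal{P}_c]\rangle$. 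To show $N$ is independent of the chains, replace $\mathcal{P}_a$ by $\mathcal{P}_a'$ with the same boundary; then $\mathcal{P}_a - \mathcal{P}_a'$ is an absolute $4$-cycle, and since $H_4(C_2(M);\Q) = 0$ there is a $5$-chain $R$ in $C_2(M)$, generic with respect to $\mathcal{P}_b$ and $\mathcal{P}_c$, with $\partial R = \mathcal{P}_a - \mathcal{P}_a'$. Applying the cycle argument of the previous paragraph to $(q_b,q_c)$ shows that $\mathcal{P}_b \cap \mathcal{P}_c$ is a $2$-cycle, so $(\mathcal{P}_a - \mathcal{P}_a') \cap \mathcal{P}_b \cap \mathcal{P}_c = \partial R \cap (\mathcal{P}_b \cap \mathcal{P}_c) = \partial(R \cap \mathcal{P}_b \cap \mathcal{P}_c)$, a boundary of a $1$-chain whose algebraic count is zero. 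Symmetric arguments for $\mathcal{P}_b$ and $\mathcal{P}_c$ complete the proof, and one sets $\Theta(M,X) = N$.

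The main obstacle is the bookkeeping in the last step: the cobordism argument relies essentially on $\mathcal{P}_b \cap \mathcal{P}_c$ being a genuine cycle, which only follows from the orbit-disjointness hypothesis applied to the auxiliary pair $(q_b,q_c)$, not just the given pair $(q_a,q_b)$. Once one is allowed to vary $q_c$ freely among regular values outside the two forbidden orbits, this presents no real difficulty because $S^2$ has measure-one supply of admissible choices and any propagating chain suffices to detect the class in $H_2(C_2(M);\Q)$.
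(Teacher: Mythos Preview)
Your argument is correct and shares the paper's key move of introducing a third propagating chain $\mathcal{P}_c$ and reading off the coefficient of $[S]$ as the triple intersection number. The difference lies in how independence is established. The paper simply observes that
\[
\Theta(M,X,\mathcal{P}_a,\mathcal{P}_b)=\langle \mathcal{P}_a,\mathcal{P}_b,\mathcal{P}_c\rangle_{C_2(M)}=\langle \mathcal{P}_a,\mathcal{P}_c,\mathcal{P}_b\rangle_{C_2(M)}=\Theta(M,X,\mathcal{P}_a,\mathcal{P}_c),
\]
using nothing but the symmetry of the triple intersection number; this simultaneously shows independence of the chain $\mathcal{P}_b$ \emph{and} of the point $q_b$, and the argument for $(\mathcal{P}_a,q_a)$ is symmetric. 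Your route instead fixes the boundaries and bounds $\mathcal{P}_a-\mathcal{P}_a'$ by a $5$-chain $R$ via $H_4(C_2(M);\Q)=0$. That is valid but, as written, only treats replacements with the \emph{same} boundary point $q_a$; the case of varying $q_a$ (and $q_b$) is only covered implicitly by your earlier remark that all propagating chains represent the same class in $H_4(C_2(M),\partial C_2(M);\Q)$, together with the symmetry of the triple pairing. Making that step explicit would close the argument, but at that point you have essentially recovered the paper's one-line symmetry proof, which avoids the $5$-chain altogether.
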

\begin{proof}
Denote the homology class of $\mathcal{P}_a \cap \mathcal{P}_{b}$ by $\Theta(M,X,P_a,P_b)[S]$. Let $q_c$ be a point in $S^2$ that is not contained in the orbits of $q_a$ and $q_b$ for the $\U_p$ action on $S^2$. Let $\mathcal{P}_c$ be a propagating chain of $(C_2(M),g,X,\nu_X^p)$ such that $\partial \mathcal{P}_c= C(M,g,X,\nu_X^p;q_c)$. Without loss of generality, we assume that $\mathcal{P}_a\cap \mathcal{P}_b \cap \mathcal{P}_c$ and all pairwise intersections between $\mathcal{P}_a$, $\mathcal{P}_b$ and $\mathcal{P}_c$ are transverse. Then we have
$$\Theta(M,X,\mathcal{P}_a,\mathcal{P}_b)=\langle \mathcal{P}_a,\mathcal{P}_b,\mathcal{P}_c \rangle_{C_2(M)}= \langle \mathcal{P}_a,\mathcal{P}_c,\mathcal{P}_b \rangle_{C_2(M)}=\Theta(M,X,\mathcal{P}_a,\mathcal{P}_c).$$
This shows that $\Theta(M,X,\mathcal{P}_a,\mathcal{P}_b)$ does not depend on the choices of $q_b$ and $P_b$. Similarly, it does not depend on the choices of $q_a$ and $P_a$.
\end{proof}

 There is a natural smooth involution of $\check{C}_2(\check{M})$ that exchanges the images of the two points. It extends as a smooth involution $\iota$ of $C_2(M)$. 
 
 \begin{faita}\label{fact_iota}
  Let $[X]$ be a combing on $\check{M}$. Let $p\in\N\setminus\{0\}$ be a multiple of $\order(X)$. Let $\nu_X^p$ be a $p$-multisection of $UX^\perp$. Let $\mathcal{P}$ be a propagating chain of $(C_2(M),g,X,\nu_X^p)$. Then $\iota(\mathcal{P})$
  is also a propagating chain of $(C_2(M),g,X,\nu_X^p)$.
\end{faita}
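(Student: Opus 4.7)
The plan is to apply Stokes' formula and reduce everything to the identity
$\iota_* C(M,g,X,\nu_X^p;q) = C(M,g,X,\nu_X^p;-q)$
of oriented cycles of $\partial C_2(M)$. Indeed, once this is established, if $\mathcal{P}$ is a propagating chain of $(C_2(M),g,X,\nu_X^p)$ with $\partial \mathcal{P}=C(M,g,X,\nu_X^p;q)$, then $\partial \iota(\mathcal{P})=\iota_*\partial \mathcal{P}=C(M,g,X,\nu_X^p;-q)$, so $\iota(\mathcal{P})$ is the required propagating chain with associated point $-q\in S^2$.

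The first step is to record two equivariance identities, both of which express that $\iota$ is intertwined with the antipodal map $a\colon S^2\to S^2$. On an embedded ball $B\subset \check{M}$, with a lift $\nu_X$ of $\nu_X^p$, the involution $\iota$ restricted to $U\check{M}|_B\subset \partial C_2(M)$ is the fiberwise antipodal map $(m,v)\mapsto (m,-v)$, so $p_{(\nu_X,w_X,X)}\circ \iota = a\circ p_{(\nu_X,w_X,X)}$. On $\partial C_2(M)\setminus U\check{M}$, swapping the two configuration points reverses the sign of $y-x$, so $G_M\circ \iota = a\circ G_M$. Since every rotation $R_{2k\pi/p}$ has axis $(0,0,1)$, it commutes with $a$. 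Combining these observations with the definition of $C(M,g,X,\nu_X^p;q)$ gives the set-theoretic equality
\[
\iota\bigl(C(M,g,X,\nu_X^p;q)\bigr)=C(M,g,X,\nu_X^p;-q),
\]
piece by piece on $U\check{M}|_B$ and on $\partial C_2(M)\setminus U\check{M}$.

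To upgrade this to an equality of \emph{oriented} cycles, I would balance the relevant sign contributions. The involution $\iota$ reverses the orientation of $C_2(M)$, since it swaps two $3$-dimensional factors (contributing $(-1)^9=-1$), and therefore also reverses the induced orientation on $\partial C_2(M)$. On the other hand, the antipodal map $a$ reverses the orientation of $S^2$. Using the Poincaré dual characterization of the preimage orientation, I would verify in a local chart that
\[
\int_{\iota_*[\phi^{-1}(q)]}\alpha \;=\; -\int_{\phi^{-1}(q)}\iota^*\alpha \cdot \bigl(\text{sign of }a^*\omega\bigr)\;=\;\int_{[\phi^{-1}(-q)]}\alpha
\]
for $\phi=R_{2k\pi/p}\circ p_{(\nu_X,w_X,X)}$ and $\omega$ approximating a delta at $q$; the two sign reversals cancel, and the same computation applies with $\phi=R_{2k\pi/p}\circ G_M$ on the complementary boundary piece.

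The main obstacle is precisely this orientation bookkeeping: one must confirm that the orientation-reversal of $\iota$ on the ambient $5$-dimensional $\partial C_2(M)$ is exactly cancelled by the orientation-reversal of $a$ on $S^2$. Once this is done in a single local model, the full identity transports globally because the definition of $C(M,g,X,\nu_X^p;q)$ is independent of the chosen local lift $\nu_X$ (the sum over $\U_p$ rotates makes it so) and because the Gauss map treats the pieces of $\partial C_2(M)$ outside $U\check{M}$ uniformly. Stokes' formula then yields the conclusion.
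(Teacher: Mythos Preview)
Your argument is correct. The paper states this as a \emph{Fact} without proof, so there is no proof in the paper to compare against; you have supplied what the author left implicit.

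Two minor remarks on presentation. First, the opening reference to ``Stokes' formula'' is a slip: what you actually use is that pushforward by a diffeomorphism commutes with taking boundaries, $\partial(\iota_*\mathcal{P})=\iota_*(\partial\mathcal{P})$, which is purely functorial and needs no Stokes. Second, the orientation bookkeeping is the crux and deserves to be stated more cleanly than via the integral heuristic you wrote. The clean statement is: if $\phi\colon N^5\to S^2$ is a submersion, the fibre $\phi^{-1}(q)$ is oriented by the rule $(\text{or.\ of }\phi^{-1}(q))\wedge \phi^*(\text{or.\ of }S^2)=\text{or.\ of }N$; since $\iota$ reverses the orientation of $N=\partial C_2(M)$ and $a$ reverses that of $S^2$, and $\phi\circ\iota=a\circ\phi$, the two signs cancel and $\iota_*[\phi^{-1}(q)]=[\phi^{-1}(-q)]$ as oriented $3$-cycles. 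This is exactly what you argued, just without the detour through an approximating $\omega$. With that tightening, the proof is complete.
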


The next proposition expresses the relation between the invariants $z_1(M,X)$ and $\Theta(M,X)$.

\begin{propa}
 We have
 $$z_1(M,X) =\frac{1}{12} \Theta(M,X) \left[\tata \right].$$
\end{propa}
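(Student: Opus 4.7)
The plan is to use Theorem~\ref{thm_inv_combing_dual} to express $z_1(M,X)$ as a sum of algebraic triple intersections of propagating chains. First I would choose three points $q_1,q_2,q_3\in S^2$ in three distinct $\U_p$-orbits, together with propagating chains $P_1,P_2,P_3$ of $(C_2(M),g,X,\nu^p_X)$ with $\partial P_i = C(M,g,X,\nu^p_X;q_i)$ in general position, so that the triple intersection $P_1\cap P_2\cap P_3$ is transverse; existence follows from an argument analogous to \cite[Lemma 11.4]{lesbook}. By Theorem~\ref{thm_theta_combing}, the algebraic triple intersection number $\langle P_1,P_2,P_3\rangle_{C_2(M)}$ equals $\Theta(M,X)$, independently of the specific propagating chains.

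Second, I would enumerate $\Dc_1^c(\emptyset)$. The unique connected trivalent graph with $2n=2$ vertices and no looped edge is the theta graph $\tata$, whose automorphism group is $S_3\times\Z/2$ of order $12$: permutations of the three edges together with the vertex swap, the latter simultaneously reversing all three edge-orientations. This group acts freely on the $2^3\cdot 3!=48$ pairs (edge-orientation, labeling $j\colon E\to\{1,2,3\}$), yielding exactly $|\Dc_1^c(\emptyset)|=4$ isomorphism classes.

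Third, I would compute $I(M,\tata,j,(P_i))[\tata]$ for a representative of each of these four classes. Fixing a vertex-orientation $o^\star$ induces an orientation-preserving identification $\check{C}_{V(\tata)}(\check{M})\cong\check{C}_2(\check{M})$; each edge-projection $p_{e_k}$ is then either the identity or the swap involution $\iota$, according to whether the edge-orientation of $e_k$ is $u_1\to u_2$ or the opposite. Consequently $\bigcap_{k=1}^3 p_{e_k}^{-1}(P_{j(k)})$ is a triple intersection of three chains each of the form $P_{j(k)}$ or $\iota(P_{j(k)})$; by Fact~\ref{fact_iota} all such chains remain propagating chains of $(C_2(M),g,X,\nu^p_X)$, so Theorem~\ref{thm_theta_combing} gives algebraic triple intersection equal to $\Theta(M,X)$. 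Each of the four classes therefore contributes $\Theta(M,X)[\tata]$ to the sum.

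Combining with $\xi_1=1/((3!)\cdot 2^3)=1/48$ then yields $z_1(M,X)=\xi_1\cdot 4\cdot \Theta(M,X)[\tata]=\tfrac{1}{12}\Theta(M,X)[\tata]$, as claimed. The main subtlety is the consistent sign tracking across the four classes: one must verify that each contribution enters as $+\Theta(M,X)$ rather than $\pm\Theta(M,X)$, which reduces to a careful reconciliation between the vertex-orientation $o^\star$, the orientation of $\iota(P_i)$ provided by Fact~\ref{fact_iota}, and the class $[\tata,o^\star]\in\A_1^c(\emptyset)$.
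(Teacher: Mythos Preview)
Your proposal is correct and follows essentially the same approach as the paper: apply Theorem~\ref{thm_inv_combing_dual}, enumerate the four elements of $\Dc_1^c(\emptyset)$, and use Theorem~\ref{thm_theta_combing} together with Fact~\ref{fact_iota} to see that each contributes $\Theta(M,X)[\tata]$, then multiply by $\xi_1=1/48$. Your write-up is actually more explicit than the paper's, which simply asserts the cardinality $|\Dc_1^c(\emptyset)|=4$ and the equal contribution of each class without unpacking the automorphism count or the sign reconciliation you flag at the end.
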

\begin{proof}
Let $P_1,P_2,P_3$ be three propagating chains of $(C_2(M),g,X,\nu_X^p)$ in general position.  
 According to Theorem~\ref{thm_inv_combing_dual}, we have
  $$z_1(M,X)=\xi_1 \sum_{(\Gamma,j)\in \Dc_1^c(\emptyset)} I(M,\Gamma,j,(P_i)_{i\in\{1,2,3\}})[\Gamma].$$
  We have $\xi_1=1/48$. There are $4$ elements of $\Dc_1^c(\emptyset)$. As a consequence of Theorem~\ref{thm_theta_combing} and Fact~\ref{fact_iota}, for every $(\Gamma,j)\in \Dc_1^c(\emptyset)$ we have $I(M,\Gamma,j,(P_i)_{i\in\{1,2,3\}})[\Gamma]= \Theta(M,X)[\tata]$. The result follows.
\end{proof}

In the particular case of Theorem~\ref{thm_theta_combing} where $q_a=(0,0,1)$ and $q_b=(0,0,-1)$, we recover Lescop's definition of the Theta invariant of a rational homology sphere equipped with a combing. See \cite[Theorem 5.1]{lescomb}.
The next proposition is equivalent to \cite[Theorem 5.3]{lescomb}. According to \cite[Proposition 10.10]{lesbook}, we have $\beta_1=1/12 [\tata]$.

\begin{propa}\label{prop_eq_theta_inv}
 Let $[X]$ be a combing on $\check{M}$. We have
 $$z_1(M,X)=z_1(M)+\frac{1}{4}p_1(X)\beta_1.$$
\end{propa}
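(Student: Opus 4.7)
The plan is to reduce the statement to an identity already established in Lescop's paper \cite{lescomb}, by linking both $z_1(M,X)$ and $z_1(M)$ to the $\Theta$-invariants via the previous proposition and its analogue for parallelizations.

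First, I would combine the previous proposition with the relation $\beta_1 = \frac{1}{12}[\tata]$ recalled from \cite[Proposition 10.10]{lesbook} to translate the desired formula into an equivalent statement purely about $\Theta$ invariants. Since $\frac{1}{4} p_1(X) \beta_1 = \frac{1}{48} p_1(X) [\tata]$ and $z_1(M,X) = \frac{1}{12} \Theta(M,X) [\tata]$, it suffices to show
\[
\Theta(M,X) = 12\, \langle z_1(M),[\tata]^\vee \rangle + \tfrac{1}{4} p_1(X),
\]
or more naturally to identify the scalar invariant $\Theta(M)$ such that $z_1(M) = \frac{1}{12}\Theta(M)[\tata]$ and prove $\Theta(M,X) = \Theta(M) + \frac{1}{4} p_1(X)$.

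The identity $z_1(M) = \frac{1}{12} \Theta(M)[\tata]$ comes from applying the parallelization version of the argument in the previous proposition: starting from Theorem~\ref{thm_inv_tau} with $n=1$ and a single parallelization $\tau$, writing the $\Theta$-invariant $\Theta(M,\tau)$ of $(M,\tau)$ from propagating chains dual to two distinct points of $S^2$, then subtracting the $p_1(\tau)/4\,\beta_1$ correction to define $\Theta(M) = \Theta(M,\tau) - \frac{1}{4} p_1(\tau)$. Proposition~\ref{prop_eq_para} and the fact that for a parallelization we can take $p=1$ (so a single propagating chain plays the role of each $\mathcal{P}_a,\mathcal{P}_b$) ensure that the parallelization-based and combing-based constructions of $\Theta$ give the same number when $X$ extends to a parallelization.

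Then the variation formula $\Theta(M,X) = \Theta(M) + \frac{1}{4} p_1(X)$ is exactly the content of Lescop's \cite[Theorem 5.3]{lescomb}. Our Theorem~\ref{thm_theta_combing} shows that $\Theta(M,X)$ does not depend on the choice of the two points $q_a,q_b$ used in its definition, so in particular the value obtained here agrees with Lescop's, which uses the specific pair $q_a=(0,0,1)$, $q_b=(0,0,-1)$. Substituting back and multiplying by $\frac{1}{12}[\tata]$ yields the stated identity.

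The main obstacle is the bookkeeping of the normalization: one must track carefully the factor $\frac{1}{12}$ relating $z_1$ to $\Theta$, the factor $\frac{1}{12}$ relating $\beta_1$ to $[\tata]$, and the factor $\frac{1}{4}$ in front of $p_1(X)$, and check that the combing-$\Theta$ defined in Theorem~\ref{thm_theta_combing} coincides with Lescop's so that \cite[Theorem 5.3]{lescomb} applies verbatim. Everything else is a direct assembly of already available results.
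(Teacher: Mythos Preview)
Your approach is correct and matches the paper's: the paper does not give an independent proof of this proposition but simply records that it is equivalent to \cite[Theorem 5.3]{lescomb}, relying on the previous proposition ($z_1(M,X)=\tfrac{1}{12}\Theta(M,X)[\tata]$), the identity $\beta_1=\tfrac{1}{12}[\tata]$, and the remark that the $\Theta(M,X)$ of Theorem~\ref{thm_theta_combing} specializes to Lescop's when $q_a=(0,0,1)$, $q_b=(0,0,-1)$. Your write-up just makes this bookkeeping explicit.
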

 
\subsection{A definition of \texorpdfstring{$z$}{z} via vector fields}

\label{sec_relate_Shimizu}

In \cite{shimizu}, Shimizu defined \emph{admissible vector fields} on $\check{M}$ as sections of $T\check{M}$ that are transverse to the zero section and standard near $\infty$. He used them to constrain propagators on the boundary of the configuration space of two points in $M$. Shimizu studied the dependence on these constraints of a function $\Tilde{z}_n(M,(\omega_i)_{i\in\{1,\dots,3n\}})$ of a family $(\omega_i)_{i\in\{1,\dots,3n\}}$ of such propagating forms. He obtained a correcting term $\Tilde{z}_n^{\mbox{\scriptsize anomaly}}(M,(\omega_i)_{i\in\{1,\dots,3n\}})$ associated with an appropriate $3$-bundle over a $4$-manifold
whose boundary is $M$. 
Shimizu proved the equality
$$Z(M)=\Tilde{z}_n(M,(\omega_i)_{i\in\{1,\dots,3n\}})-\Tilde{z}_n^{\mbox{\scriptsize anomaly}}(M,(\omega_i)_{i\in\{1,\dots,3n\}}).$$
Gradients of \say{admissible Morse functions} on $\check{M}$ are examples of admissible vector fields. See \cite[Example 4.2]{shimizu}. This allowed Shimizu to prove the equivalence between $Z$ and Watanabe's Morse homotopy invariant defined in \cite{watanabeMorse}.

In general, both
$\Tilde{z}_n(M,(\omega_i)_{i\in\{1,\dots,3n\}})$ and $\Tilde{z}_n^{\mbox{\scriptsize anomaly}}(M,(\omega_i)_{i\in\{1,\dots,3n\}})$ depend on the forms $\omega_i$. In \cite[Remark 4.7]{shimizu}, Shimizu stated that these elements do not depend on the forms $\omega_i$ when they are adapted to  a generic family $(\gamma_i)_{i\in\{1,\dots,3n\}}$ of admissible vector fields.  Combings are examples of admissible vector fields.  When $X$ is a combing, the constant family equal to $X$ is not generic. We therefore ask the following question.

\begin{questions}
\label{ques_shimizu}
 Can Shimizu's construction be adapted to give a consistent definition of an invariant of a rational homology sphere equipped with a combing ? Could such an invariant be related to the invariant $z(M,X)$ of Theorem~\ref{thm_inv_combing} ?
\end{questions}

Recall the involution $\iota$ of $C_2(M)$ from the previous section. Propagating forms that are transformed into their opposite by this involution are called \emph{antisymmetric}. In Shimizu's construction the restriction on $\partial C_2(M)$ of propagators adapted to an admissible vector field $\gamma$ must be antisymmetric, even when $\gamma$ is nowhere vanishing. In contrast, our definition of $Z$ allows the use of non antisymmetric propagators. As explained in \cite{Man}, such a use may make concrete computations of $Z$ easier. For example, Lescop used non antisymmetric propagating chains to compute $\Theta(M,X)$ in \cite{lesHC}.

\section{About multisections}
\label{sec_multisection}
Throughout this section, $[X]$ is a fixed combing on $\check{M}$. We study multisections of $UX^\perp$. In Section~\ref{sec_Euler}, we give a definition of the Euler class of a principal $S^1$-bundle. In Section~\ref{sec_concrete_multisection}, we construct a special multisection of $UX^\perp$ that we will use in our proof of Theorem~\ref{thm_inv_var}.

Let $p\in\N\setminus \{0\}$.
Let $E$ be an $S^1$-principal bundle on $\check{M}$. Let $E_p$ denote the quotient $E/\mathbb{U}_p$ with respect to the action of $\U_p\subset S^1$ on $E$. In this paragraph, we give some details about the induced structure of an $S^1$-principal bundle on $E_p$. Let $f_p\colon E \to E_{p}$ denote the quotient map. The $S^1$-action on $E$ induces a free and transitive $S^1/\U_p$-action on $E_{p}$. The map $z\mapsto z^p$ from $S^1$ to $S^1$ induces an isomorphism of Lie groups from $S^1/\U_p$ to $S^1$. Using the above identification, the $S^1/\U_p$-action on $E_{p}$ becomes an $S^1$-action that can be described as follows. Let $(z,x)\in S^1 \times E_{p}$. Let $(\Tilde{z},\Tilde{x})\in S^1 \times E$ be such that $\Tilde{z}^p=z$ and $f_p(\Tilde{x})=x$. Then $z\cdot x =f_p(\Tilde{z} \cdot \Tilde{x})$. This action equips $E_{p}$ with the structure of a principal $S^1$-bundle. The map $f_p$ is a morphism of principal $S^1$-bundles, and a covering of degree $p$.

\begin{lema}\label{lem_exist_section}
Let $p\in\N\setminus\{0\}$ be a multiple of $\order(X)$. There exists a $p$-multisection of $UX^\perp$.
\end{lema}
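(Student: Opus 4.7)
The plan is to prove the lemma in two stages: first, establish that the principal $S^1$-bundle $UX^\perp_p$ has trivial Euler class and hence admits global sections; second, upgrade this to the existence of a section meeting the prescribed boundary condition along $\check{V}_\infty$. This split matches the announcement made just before Definition \ref{def_multisection} that $UX^\perp_p$ is trivial when $p$ is a multiple of $\order(X)$.

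For the first stage, I would verify the identity $e(UX^\perp_p) = p \cdot e(UX^\perp)$ in $H^2(\check{M};\Z)$. Under the identification $S^1/\U_p \cong S^1$ given by $[z] \mapsto z^p$ used throughout the paper, the quotient map $f_p$ acts fiberwise as the $p$-th power map on $S^1$. On the level of local transition cocycles $g_{ij}\colon U_i \cap U_j \to S^1$ of $UX^\perp$, the corresponding cocycles of the quotient bundle $UX^\perp_p$ become $g_{ij}^{p}$, which proves the Euler class identity. Since $p$ is a multiple of the order $\order(X)$ of $e(UX^\perp)$ in $H^2(\check{M};\Z)$, we conclude $e(UX^\perp_p) = 0$, so $UX^\perp_p$ is trivializable.

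For the second stage, I would appeal to relative obstruction theory for principal $S^1$-bundles. The obstruction to extending the given section $f_p \circ \vst$ from $\check{V}_\infty$ to $\check{M}$ is a relative cohomology class $\omega \in H^2(\check{M}, \check{V}_\infty; \Z)$ whose image under the canonical map $H^2(\check{M}, \check{V}_\infty; \Z) \to H^2(\check{M};\Z)$ is precisely $e(UX^\perp_p)$. The relevant fragment of the long exact sequence of the pair $(\check{M}, \check{V}_\infty)$ reads
\[
H^1(\check{V}_\infty;\Z) \longrightarrow H^2(\check{M}, \check{V}_\infty;\Z) \longrightarrow H^2(\check{M};\Z).
\]
Since $\check{V}_\infty$ is diffeomorphic to a punctured three-ball, it deformation retracts onto $S^2$, so $H^1(\check{V}_\infty;\Z) = 0$ and the second arrow is injective. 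Combined with $e(UX^\perp_p) = 0$, this forces $\omega = 0$, so the extension exists.

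The main care is needed in the Euler class identity, since it depends on precisely which identification $S^1/\U_p \cong S^1$ is being used. As a more hands-on alternative, once a global section $s$ of $UX^\perp_p$ has been produced by stage one, one can write $f_p \circ \vst = \phi \cdot s$ on $\check{V}_\infty$ for a smooth $\phi \colon \check{V}_\infty \to S^1$, lift $\phi$ to a real-valued function on the simply connected $\check{V}_\infty$, extend the lift smoothly to $\check{M}$, and set $\nu_X^p = \phi \cdot s$ with the extended $\phi$. The delicate technicality in this variant is performing the smooth extension from an open subset while preserving $\phi$ on all of $\check{V}_\infty$, which is why the cohomological formulation above is cleaner.
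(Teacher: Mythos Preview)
Your proof is correct. The Euler class identity $e(UX^\perp_p)=p\,e(UX^\perp)$ via transition cocycles is valid under the paper's identification $S^1/\U_p\cong S^1$, $[z]\mapsto z^p$, and your relative obstruction argument for extending $f_p\circ\vst$ over $\check{M}$ is sound: the injectivity of $H^2(\check{M},\check{V}_\infty;\Z)\to H^2(\check{M};\Z)$ (coming from $H^1(\check{V}_\infty;\Z)=0$) forces the relative obstruction to vanish once the absolute Euler class does.

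However, your route is genuinely different from the paper's. The paper does not prove Lemma~\ref{lem_exist_section} via obstruction theory; it proves it by an explicit construction in Section~\ref{sec_concrete_multisection}. Concretely, the paper chooses a knot $K$ Poincar\'e dual to $-e(UX^\perp)$, extends $X$ to a parallelization $\tau_e$ on the exterior $E_K$ (Lemma~\ref{lem_exist_knot}), takes a surface $\Sigma\subset E_K$ with $\partial\Sigma$ representing $p[l_K]+q[m_K]$ (Lemma~\ref{lem_exist_surface}), twists the section $Y_e^p=f_p\circ Y_e$ of $UX^\perp_p$ along $\Sigma$, and finally extends across $N(K)$ by a degree computation (Lemma~\ref{lem_concrete_multi}). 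The paper itself acknowledges your approach in Remark~\ref{rque_Euler_quotient}, noting that one could prove the lemma directly from $e(E_p)=p\,e(E)$.

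What each approach buys: your argument is shorter and conceptually cleaner for the bare existence statement. The paper's constructive proof is longer but is not gratuitous: the specific multisection $\Tilde{Y}^p$ it produces, together with the auxiliary data $K$, $\Sigma$, $\tau_e$, and (when $\slk(K)=0$) the parallelization $\tau_N^{\mbox{\scriptsize multi}}$, is reused essentially throughout Section~\ref{sec_proof_var} to build the pseudo-parallelization $\Tilde{\tau}$ and to compare $z_n(M,X)$ with $z_n(M,\Tilde{\tau})$ in the proof of Theorem~\ref{thm_inv_var}. So while your proof settles Lemma~\ref{lem_exist_section}, it would not by itself feed the later arguments.
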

 We give a reformulation of Lemma~\ref{lem_exist_section} in terms of the vanishing of an Euler class in Lemma~\ref{lem_Euler_knot}. We prove Lemma~\ref{lem_exist_section} in Section~\ref{sec_concrete_multisection} after Lemma~\ref{lem_concrete_multi}. We now prove that any two $p$-multisections of $UX^\perp$ are homotopic.

\begin{lema}\label{lem_multi_hom}
 Let $p\in\N\setminus\{0\}$ be a multiple of $\order(X)$. Let $\nu^p_X$ and $w^p_X$ be two $p$-multisections of $UX^\perp$. There exists a smooth homotopy $H\colon [0,1]\times \check{M} \to UX^\perp_p$ between $\nu^p_X$ and $w^p_X$ through $p$-multisections of $UX^\perp$.
\end{lema}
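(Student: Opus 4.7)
The plan is to reduce the problem to a question about maps to $S^1$ by exploiting the principal $S^1$-bundle structure on $UX^\perp_p$ recalled at the beginning of this section. Given the two $p$-multisections $\nu^p_X$ and $w^p_X$, there is a unique smooth map $\phi\colon \check{M}\to S^1$ such that $w^p_X(m)=\phi(m)\cdot \nu^p_X(m)$ for every $m\in \check{M}$, where $\cdot$ denotes the $S^1$-action on $UX^\perp_p$. Since both sections coincide with $f_p\circ\vst$ on $\check{V}_\infty$, the map $\phi$ equals $1$ on $\check{V}_\infty$, and therefore extends smoothly to $\phi\colon M\to S^1$ by setting $\phi(\infty)=1$.

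The key step is to build a smooth homotopy $(\Phi_t)_{t\in [0,1]}$ of maps $M\to S^1$ from the constant map $1$ to $\phi$ with the additional property that $\Phi_t\equiv 1$ on $V_\infty$ for every $t$. I would do this by lifting $\phi$ along the smooth universal cover $p\colon \R\to S^1$, $p(x)=\exp(2\pi i x)$. The obstruction to such a lift is the pull-back of the generator of $H^1(S^1;\Z)$, and thus lies in $H^1(M;\Z)$. Since $M$ is a rational homology sphere, $H_1(M;\Z)$ is finite, so the universal coefficient theorem yields $H^1(M;\Z)=\mathrm{Hom}(H_1(M;\Z),\Z)=0$. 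Hence a smooth lift $\tilde{\phi}\colon M\to \R$ exists, which we normalize so that $\tilde{\phi}(\infty)=0$. Since $\phi\equiv 1$ on the connected set $V_\infty$, the restriction $\tilde{\phi}|_{V_\infty}$ is continuous and $\Z$-valued, hence constantly zero.

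With $\tilde\phi$ in hand, set $\Phi_t(m)=\exp(2\pi i t\,\tilde{\phi}(m))$ for $t\in[0,1]$ and $m\in M$. This is a smooth homotopy from $\Phi_0\equiv 1$ to $\Phi_1=\phi$ that is constantly equal to $1$ on $V_\infty$. The required smooth homotopy between the two $p$-multisections is then $H(t,m)=\Phi_t(m)\cdot \nu^p_X(m)$, which is a smooth section of $UX^\perp_p$ for each $t$, interpolates between $\nu^p_X$ and $w^p_X$, and coincides with $f_p\circ\vst$ on $\check{V}_\infty$ for every $t$.

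The main obstacle is the verification that a lift $\tilde\phi$ exists and can be chosen to vanish on $V_\infty$, which is precisely where both the rational homology sphere hypothesis on $M$ and the fact that both multisections coincide on $\check{V}_\infty$ with the standard one are used in an essential way. The rest of the argument is formal.
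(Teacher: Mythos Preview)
Your proof is correct and follows essentially the same approach as the paper: both use the principal $S^1$-action to compare the two sections via a map $\phi\colon M\to S^1$ constant equal to $1$ on $V_\infty$, and both invoke $H^1(M;\Z)=0$ (from $M$ being a rational homology sphere) to conclude that $\phi$ is null-homotopic relative to $V_\infty$. The only difference is cosmetic: the paper cites the bijection between $[(M,V_\infty),(S^1,1)]$ and $H^1(M;\Z)$ directly, while you unpack this by explicitly lifting $\phi$ to $\R$ and writing down the straight-line homotopy.
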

\begin{proof}
 The section $\nu_X^p$ induces a bundle isomorphism between $UX^\perp_p$ and $\check{M} \times S^1$ such that $\nu_X^p$ is identified with the constant section equal to $1$. Using this identification, the section $w_X^p$
 becomes
 a map $w_X^p\colon \check{M} \to S^1$ that is constant equal to $1$ on $\check{V}_\infty$. Extend this map
 to 
 a map from $M$ to $S^1$ constant equal to $1$ on $V_\infty$. To prove the lemma, it suffices to show that
 any such map is homotopic to the constant map equal to $1$ relatively to $V_\infty$. As $V_\infty$ is an embedded $3$-ball in $M$, the set of homotopy classes of maps from $(M,V_\infty)$ to $(S^1,1)$ is in bijection with $H^1(M,\Z)$. As $M$ is a rational homology sphere, we have $H^1(M,\Z)=0$. The result follows. 
\end{proof}

\subsection{The Euler class of a principal circle bundle}
\label{sec_Euler}
Let $E$ be an $S^1$-principal bundle on $\check{M}$ equipped with a trivialisation $E|_{\check{V}_\infty}\to S^1 \times \check{V}_\infty$ over $\check{V}_\infty$. We associate to $E$ a smooth $D^2$-bundle $D(E)$ over $\check{M}$ whose transition maps are the transition maps of $E$ seen as elements of $S^1=SO(2) \subset \mbox{Diff}^+(D^2)$.
We use the outward normal first convention to orient boundaries.
The bundle $D(E)$
is equipped with a trivialisation over $V_\infty$. The zero section is a well-defined smooth section of $D(E)$. 
We use the next lemma as a definition for the Euler class of $E$.

\begin{lema}\label{lem_Euler}
Let $E$ be an $S^1$-bundle on $\check{M}$ equip with a trivialisation $E|_{V_\infty}\to S^1 \times \check{V}_\infty$ over $\check{V}_\infty$. There exists a smooth section $Y\colon \check{M} \to D(E)$ of $D(E)$ which is transverse to the zero section and constant equal to $1$ on $V_\infty \setminus\{\infty\}$. The set $L_Y=\{m\in\check{M}\suchthat Y(m)=0\}$ is a smooth link in $M\setminus V_\infty$. It is cooriented by the $D^2$ fibers of $D(E)$. The homology class $[L_Y]$ of $L_Y$ does not depend on the chosen section $Y$. It is by definition the Poincaré dual to the Euler class $e(E)$ of $E$.
\end{lema}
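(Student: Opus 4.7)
The plan is to produce $Y$ by starting from a trivial extension of the given section near $\infty$ and then achieving transversality by a generic perturbation supported away from $V_\infty$; the two remaining assertions (smoothness of $L_Y$ with its coorientation, and independence of $[L_Y]$) will follow from transversality arguments, the second one via a parametrized cobordism.

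\emph{Construction of $Y$.} Using the trivialisation $D(E)|_{V_\infty}\cong V_\infty \times D^2$ provided by the hypothesis, define $Y_0$ on $V_\infty$ to be the constant section equal to $1\in\partial D^2$. Extend $Y_0$ smoothly to a section of $D(E)$ over all of $\check{M}$; this is possible because $\check{M}$ is paracompact and $D(E)$ has convex fibers, so a partition of unity argument in local trivialisations produces an extension that may be arbitrary outside a neighbourhood of $V_\infty$. Now apply the parametric transversality theorem to the evaluation map of a generic smooth family of perturbations supported in $M\setminus V_\infty$: for a residual set of perturbations, the perturbed section $Y$ is transverse to the zero section of $D(E)$ while remaining equal to $1$ on $V_\infty\setminus\{\infty\}$.

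\emph{The link $L_Y$ and its coorientation.} Transversality of $Y$ to the zero section, which is a smooth submanifold of codimension $2$ in the $5$-manifold $D(E)$, implies that $L_Y=Y^{-1}(\text{zero section})$ is a smooth submanifold of $\check{M}$ of codimension $2$, hence of dimension $1$. Since $Y\equiv 1$ on $\check{V}_\infty$, we have $L_Y\subset M\setminus V_\infty$, which is compact, so $L_Y$ is a closed $1$-submanifold of $M\setminus V_\infty$, i.e.\ a link. For $m\in L_Y$, the differential $dY_m\colon T_m\check{M}\to T_0 D(E)_m\cong T_m\check{M}\oplus \R^2$ followed by the projection to the fibre direction is surjective; its kernel is $T_m L_Y$, and the induced isomorphism $T_m\check{M}/T_m L_Y\cong \R^2$ transports the canonical orientation of the $D^2$-fibres (inherited from the $SO(2)$-structure group) to a coorientation of $L_Y$ in $\check{M}$.

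\emph{Independence of $[L_Y]$.} Let $Y_0,Y_1$ be two choices. The pullback bundle $\pi^*D(E)$ over $[0,1]\times \check{M}$ admits a section $\tilde Y$ restricting to $Y_i$ on $\{i\}\times\check{M}$ and constant equal to $1$ on $[0,1]\times\check{V}_\infty$; one builds it first by a convex combination in the trivialisation over $V_\infty$ and any smooth interpolation (e.g.\ via a tubular neighbourhood of the zero section in $D(E)$, or fibrewise geodesics for an auxiliary metric) elsewhere. A relative transversality theorem then allows us to perturb $\tilde Y$, fixing it at $t=0,1$ and on $[0,1]\times \check V_\infty$, so that it is transverse to the zero section. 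The preimage of the zero section is a smoothly embedded cooriented $2$-dimensional cobordism in $[0,1]\times(M\setminus V_\infty)$ between $L_{Y_0}$ and $L_{Y_1}$, so $[L_{Y_0}]=[L_{Y_1}]$ in $H_1(\check{M};\Z)$.

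The main technical point is the parametric/relative transversality step in the cobordism argument: one must perturb $\tilde Y$ while keeping it fixed on the whole subset $(\{0,1\}\times\check{M})\cup([0,1]\times\check V_\infty)$, which requires applying the jet transversality theorem to sections agreeing with a prescribed one on a closed subset, a standard but somewhat delicate manipulation; the remaining assertions are then direct consequences.
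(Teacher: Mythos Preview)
Your proof is correct and follows essentially the same cobordism argument as the paper. One minor simplification the paper exploits: since $D(E)$ has structure group $SO(2)\subset GL_2(\R)$, the fibrewise linear interpolation $H_t=(1-t)Y_0+tY_1$ is globally well-defined on all of $\check{M}$, so there is no need to patch a convex combination near $V_\infty$ with an auxiliary interpolation elsewhere; the paper then projects the resulting transverse zero set $S_H\subset[0,1]\times\check{M}$ to $\check{M}$ to obtain directly a $2$-chain with boundary $L_{Y_1}-L_{Y_0}$.
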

\begin{proof}

We only prove that the homology class of $L_Y$ does not depend on $Y$. Assume that $Y_0$ and $Y_1$ are two such sections of $D(E)$. The maps $H_t=(1-t)Y_0+tY_1$ for $t\in [0,1]$ define a smooth homotopy from $Y_0$ to $Y_1$ that is constant equal to $1$ on $[0,1]\times (V_\infty \setminus\{\infty\})$. The homotopy $H$ can be perturbed on $]0,1[\times \check{M}$ so that the set $S_H=\{(t,m)\in [0,1]\times \check{M}\suchthat H_t(m)=0\}$ is a smooth proper compact surface in $[0,1]\times \check{M}$. The surface $S_H$ is cooriented by the $D^2$ fibers of $D(E)$. The image of $S_H$ by the projection from $[0,1]\times \check{M}$ onto the factor $\check{M}$ is a $2$-chain of $\check{M}$ whose boundary is $L_{Y_1}-L_{Y_0}$.
\end{proof}

In the next definition, we explain how to twist sections of a principal $S^1$-bundle along the normal direction to an embedded surface.

\begin{defia}\label{def_twist}
 Let $N$ be a smooth compact connected $3$-manifold. Let $E$ be a principal $S^1$-bundle over $N$. Let $Y$ be a section of $E$ over $N$. Let $\Sigma$ be a properly embedded surface in $N$. Choose a smooth orientation-preserving embedding of $[-1,1]\times \Sigma$ into $N$ that maps $[-1,1]\times \partial \Sigma$ into $\partial N$ and such that $\{0\}\times \Sigma$ is identified with $\Sigma$. Let $u$ denote
 the $[-1,1]$-coordinate. 
 Choose a smooth increasing function $\kiS{-1}{1}\colon [-1,1]\to [0,1]$ that is $0$ near $-1$ and $1$ near $1$. Let $k\in\Z$. The 
 \emph{$k$-twist}
 of $Y$ along $\Sigma$ is the section $Y(\Sigma,k)$ of $E$ over $N$ defined to be $Y(\Sigma,k)=Y$ on $N \setminus ([-1,1]\times \Sigma)$ and
 $$Y(\Sigma,k)=\exp(2ik\pi \kiS{-1}{1}(u))\cdot Y$$
 on $[-1,1]\times \Sigma$.
\end{defia}

In the next definition, we introduce the relative degree of two sections of a principal $S^1$-bundle over $S^1$.

\begin{defia}\label{def_degree}
 Let $\gamma$ be a smooth closed connected $1$-manifold. Let $E$ be a principal $S^1$-bundle on $\gamma$. Let $s$ be a section of $E$. It induces a trivialisation $E \to S^1\times \gamma$ of $E$. Any section $s'$ of $E$ can be seen as a map from $\gamma$ to $S^1$ using this trivialisation. Let $d_{\gamma}(s,s')$ denote the degree of this map.
\end{defia}

We dot not prove the following four easy lemmas.

\begin{lema}\label{lem_deg_sum}
Let $\gamma,E,s,s'$ be as in Definition \ref{def_degree}. If $s''$ is a third section of $E$, we have $d_{\gamma}(s,s'')=d_{\gamma}(s,s')+d_{\gamma}(s',s'')$.
\end{lema}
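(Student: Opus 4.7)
The plan is to reduce everything to the well-known additivity of degrees of maps $\gamma \to S^1$ under pointwise multiplication, once all three sections have been expressed via a common trivialization.

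First, I will use the section $s$ to trivialize $E$ as $S^1 \times \gamma$, sending $s$ to the constant map $1$. Under this trivialization, $s'$ and $s''$ become smooth maps $f, h\colon \gamma \to S^1$, and by definition of the relative degree we have $d_\gamma(s,s') = \deg f$ and $d_\gamma(s,s'') = \deg h$.

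Next, I will compare the two trivializations of $E$ induced respectively by $s$ and $s'$. The trivialization coming from $s'$ is obtained from the one coming from $s$ by pointwise multiplication by $f^{-1}$ on the $S^1$ factor. Reading $s''$ in the trivialization induced by $s'$ therefore gives a map $g\colon \gamma \to S^1$ satisfying $h = f\cdot g$ pointwise, and by definition $d_\gamma(s',s'') = \deg g$.

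Finally, I will invoke the standard fact that the degree defines a group homomorphism from the group of smooth maps $\gamma \to S^1$ (under pointwise multiplication) to $\mathbb{Z}$; equivalently, it is the integral of $\frac{1}{2i\pi}d\log$ pulled back from $S^1$, which is clearly additive under $h = f\cdot g$. This yields $\deg h = \deg f + \deg g$, i.e.\ the desired equality $d_\gamma(s,s'') = d_\gamma(s,s') + d_\gamma(s',s'')$. No part of this should present any difficulty: the only point worth writing carefully is the change-of-trivialization identity $h = f\cdot g$, which follows directly from the freeness and transitivity of the $S^1$-action on $E$.
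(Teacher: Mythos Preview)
Your argument is correct. The paper does not actually prove this lemma: it explicitly states that it will not prove this and the three following easy lemmas, so there is no approach to compare against. Your reduction to the additivity of the degree of maps $\gamma \to S^1$ under pointwise multiplication, together with the change-of-trivialization identity $h = f\cdot g$, is exactly the standard justification one would expect.
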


\begin{lema}\label{lem_deg_mer}
 Let
$E$, $Y$, and $L_Y$ 
be as in Lemma~\ref{lem_Euler}. Let $N(L_Y)$ be a tubular neighbourhood of $L_Y$. Let $m \subset \partial N(L_Y)$ be a meridian of a connected component of $N(L_Y)$. Orient $m$ such that $\lk(m,L_Y)=1$. Let $s$ be a section of $E$ over a meridian disk $D_m \subset N(L_Y)$ such that $\partial D_m =m$. Then $d_m(s,Y)=1$.
\end{lema}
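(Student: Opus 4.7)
The plan is to reduce the claim to a local computation in a trivialisation of $E$ near the component $K$ of $L_Y$ enclosed by $D_m$. Since the component of $N(L_Y)$ containing $K$ is a solid torus and principal $S^1$-bundles over a solid torus are classified by $H^2(S^1\times D^2;\Z)=0$, the bundle $E$ is trivial there. First I would fix such a trivialisation, which induces a compatible trivialisation of $D(E)$ over the same solid torus. In this local picture, $Y$ becomes a smooth map $Y_{\text{loc}}$ to $D^2$ whose zero set is $K$ and which is transverse to $0\in D^2$, while $s$ becomes a map $s_{\text{loc}}\colon D_m\to S^1$. Let $s_0$ denote the reference section of $E$ coming from this local trivialisation.

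Next I would compute the degrees involved. Since $Y$ is nowhere zero on $m$, normalising yields a genuine section $Y/|Y|$ of $E|_m$, and by Lemma~\ref{lem_deg_sum} one has $d_m(s,Y)=d_m(s_0,Y)-d_m(s_0,s)$. Because $s$ extends over the disk $D_m$, the map $s_{\text{loc}}|_m\colon m\to S^1$ is null-homotopic, so $d_m(s_0,s)=0$. The degree $d_m(s_0,Y)$ equals the winding number of $Y_{\text{loc}}|_m\colon m\to D^2\setminus\{0\}$ around $0$, which in turn equals the local degree of the extension $Y_{\text{loc}}|_{D_m}\colon D_m\to D^2$ at its unique transverse zero $p=D_m\cap K$.

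Finally I would verify that this local degree is $+1$. This is the only delicate point of the argument. By the construction of $L_Y$ in Lemma~\ref{lem_Euler}, the coorientation of $L_Y$ is given by the derivative $dY$, which sends the normal fibre of $L_Y$ at $p$ isomorphically and compatibly with orientations onto the fibre $T_0 D^2$ of $D(E)$. The orientation of $m$ is fixed by $\lk(m,L_Y)=1$, which amounts to orienting $D_m$ so that $T_p D_m$ represents the positive coorientation of $K$ at $p$. Combining the two, $dY_{\text{loc}}$ restricted to $T_p D_m$ is an orientation-preserving isomorphism onto $T_0 D^2$, so the local degree at $p$ is $+1$, and we conclude $d_m(s,Y)=1$. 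The only mild obstacle in this proof is therefore the careful bookkeeping of the orientation conventions for the coorientation of $L_Y$ and for the meridian $m$.
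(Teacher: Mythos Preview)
The paper does not actually prove this lemma: immediately before it, the text reads ``We do not prove the following four easy lemmas,'' and Lemma~\ref{lem_deg_mer} is among them. So there is no proof in the paper to compare against.

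Your argument is correct and is the expected one. The trivialisation over the solid torus reduces the computation to the winding number of $Y_{\mathrm{loc}}|_m$ around $0$, which equals the signed count of zeros of $Y_{\mathrm{loc}}$ on $D_m$; there is exactly one such zero, at $p=D_m\cap K$, and the sign is $+1$ by the coorientation convention of Lemma~\ref{lem_Euler} together with the normalisation $\lk(m,L_Y)=1$. Your use of Lemma~\ref{lem_deg_sum} to split $d_m(s,Y)=d_m(s_0,Y)-d_m(s_0,s)$ and then kill the second term via the extension of $s$ over $D_m$ is the clean way to organise the computation. The only caveat, which you already flag, is the orientation bookkeeping: one must check that the paper's convention that $L_Y$ is ``cooriented by the $D^2$ fibers of $D(E)$'' means precisely that $dY_p$ is orientation-preserving from the positive normal of $L_Y$ to the fiber, and that the orientation of $D_m$ induced by $\partial D_m=m$ matches the positive coorientation at $p$. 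Both are routine to verify with the outward-normal-first convention stated in Section~\ref{sec_Euler}.
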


\begin{lema}\label{lem_deg_ext}
Let $E$ be a principal $S^1$-bundle of a solid torus $T=D^2 \times S^1$. Let $s$ be a section of $E$ over a meridian disk $D^2  \times \{1\}$ of $T$. Let $s'$ be a section of $E$ over $\partial T$. Then $s'$ extends as a section of $E$ over $T$ if and only if $d_{(\partial D^2) \times \{1\}}(s,s')=0$.
\end{lema}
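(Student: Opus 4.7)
The plan is to reduce the extension problem to an obstruction question for maps into $S^1$ by first trivialising $E$ globally over $T$. Since $T = D^2 \times S^1$ is homotopy equivalent to $S^1$, we have $H^2(T;\Z) = 0$, and principal $S^1$-bundles are classified by $H^2(T;\Z)$, so $E$ is trivial. Fix a trivialisation $\phi\colon E \to T\times S^1$. Under $\phi$, the section $s$ corresponds to a map $f\colon D^2\times\{1\} \to S^1$, and $s'$ corresponds to a map $f'\colon \partial T \to S^1$. An extension of $s'$ to a section of $E$ over $T$ is then the same thing as an extension of $f'$ to a map $T \to S^1$.

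Next, since $S^1$ is an Eilenberg–MacLane space $K(\Z,1)$, homotopy classes of maps from a reasonable space $Y$ to $S^1$ are in bijection with $H^1(Y;\Z)$, and this bijection is natural. The projection $\pi\colon T \to S^1$ onto the second factor induces an isomorphism $H^1(S^1;\Z) \to H^1(T;\Z)$, and on $\partial T = S^1 \times S^1$ the class $\pi^*\alpha$ (where $\alpha$ generates $H^1(S^1;\Z)$) evaluates to $0$ on the meridian $m = \partial D^2 \times \{1\}$ and to $1$ on the longitude. Hence the image of the restriction map $H^1(T;\Z) \to H^1(\partial T;\Z)$ is exactly the subgroup of classes vanishing on $m$. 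It follows that $f'$ extends to $T$ if and only if $\deg(f'|_m) = 0$.

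It remains to identify $\deg(f'|_m)$ with $d_m(s,s')$. The section $s$ induces a second trivialisation $\phi_s$ of $E|_{D^2\times\{1\}}$ in which $s$ is constant equal to $1$; comparing with $\phi$ shows that the change of trivialisation is pointwise multiplication by $f^{-1}$. Reading $s'|_m$ in $\phi_s$ therefore gives the map $x \mapsto f'(x)/f(x)$, so by Lemma~\ref{lem_deg_sum} (or by direct computation) $d_m(s,s') = \deg(f'|_m) - \deg(f|_m)$. But $f|_m$ is the restriction to the boundary circle of the disk map $f\colon D^2\times\{1\} \to S^1$, so $\deg(f|_m) = 0$. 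Hence $d_m(s,s') = \deg(f'|_m)$, and the two conditions coincide.

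There is no deep obstacle here; the argument is entirely classical obstruction theory for maps into a $K(\Z,1)$. The only point requiring care is the bookkeeping between the two trivialisations — the global $\phi$ chosen on all of $T$ and the local $\phi_s$ induced by $s$ on the meridian disk — when interpreting the relative degree $d_m(s,s')$.
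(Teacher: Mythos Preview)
Your argument is correct. Note that the paper does not actually provide a proof of this lemma: it is listed together with Lemmas~\ref{lem_deg_sum}, \ref{lem_deg_mer}, and \ref{lem_deg_twist} as one of ``four easy lemmas'' that are left to the reader. Your route --- trivialise $E$ globally over $T$ using $H^2(T;\Z)=0$, translate the extension problem into an obstruction question for maps into $K(\Z,1)$, and then identify $\deg(f'|_m)$ with $d_m(s,s')$ via the two trivialisations --- is precisely the kind of classical justification the author has in mind. The only subtlety you flag (that extension of a map $\partial T\to S^1$ up to homotopy implies an honest extension, via a collar) is handled implicitly by the $K(\Z,1)$ argument and is unproblematic.
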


The following lemma describes the  effect of twisting a section of a principal $S^1$-bundle on the degree.

\begin{lema}\label{lem_deg_twist}
 Let $N,E,Y,\Sigma,k$ be as in Definition \ref{def_twist}. Let $\gamma$ be an oriented closed simple curve on $\partial N$. Consider a section $s$ of $E|_{\gamma}$. We have $ d_{\gamma}(s,Y(\Sigma,k))=d_{\gamma}(s,Y) + k\langle\partial \Sigma,\gamma \rangle_{\partial N} $, where $\langle \cdot, \cdot \rangle_{\partial N}$ denotes the algebraic intersection over $\partial N$. (Here $\partial N$ is oriented as the boundary of $N$.)
\end{lema}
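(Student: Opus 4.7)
The plan is to reduce the statement to computing how a single twist changes a section, then interpret this change as the degree of an explicit $S^1$-valued map and read it off from a regular value.

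\emph{Step 1: Reduction to computing the twist difference.} By Lemma~\ref{lem_deg_sum} applied to the three sections $s$, $Y$, $Y(\Sigma,k)$ along $\gamma$, we have
\[
d_{\gamma}(s,Y(\Sigma,k)) \;=\; d_{\gamma}(s,Y) + d_{\gamma}(Y,Y(\Sigma,k)).
\]
So it suffices to prove $d_{\gamma}(Y,Y(\Sigma,k)) = k\,\langle\partial\Sigma,\gamma\rangle_{\partial N}$.

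\emph{Step 2: Trivialise along $Y$ and identify the twist with a map to $S^1$.} Using $Y$ to trivialise $E$ over $N$, we can write $Y(\Sigma,k) = \phi_k\cdot Y$ for a smooth map $\phi_k\colon N\to S^1$ which equals $1$ on $N\setminus([-1,1]\times\Sigma)$ and equals $\exp(2ik\pi\,\kiS{-1}{1}(u))$ on the tubular neighbourhood $[-1,1]\times\Sigma$. By definition of $d_\gamma$, the relative degree $d_{\gamma}(Y,Y(\Sigma,k))$ is the degree of the restriction $\phi_k|_\gamma\colon\gamma\to S^1$.

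\emph{Step 3: Compute the degree of $\phi_k|_\gamma$ via a regular value.} After an isotopy of $\gamma$ inside $\partial N$, we may assume that $\gamma$ is transverse to $\partial\Sigma\subset\partial N$. Pick a regular value $e^{i\theta_0}\in S^1\setminus\{1\}$ of $\phi_k$ (this is a regular value of $\phi_k|_\gamma$ as well, after possibly perturbing $\theta_0$). Because $\phi_k$ depends only on the $u$-coordinate on the tube and is constant equal to $1$ outside, the preimage $\phi_k^{-1}(e^{i\theta_0})$ meets $\partial N$ in $|k|$ parallel copies of $\partial\Sigma$ inside $[-1,1]\times\partial\Sigma$, one at each value $u_j\in(-1,1)$ satisfying $\kiS{-1}{1}(u_j)=(\theta_0+2\pi j)/(2k\pi)$. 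Each of these copies is cooriented in $\partial N$ by the differential $d\phi_k = 2ik\pi\,(\kiS{-1}{1})'(u)\,e^{i\theta_0}\,du$, i.e.\ by $\mathrm{sgn}(k)\,du$.

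\emph{Step 4: Identify the signed count with the intersection pairing.} At each transverse intersection point $p\in\gamma\cap\bigl(\{u_j\}\times\partial\Sigma\bigr)$, the contribution to $\deg(\phi_k|_\gamma)$ is the sign of $\phi_k|_\gamma$'s derivative at $p$, which by the previous computation agrees with $\mathrm{sgn}(k)$ times the sign of the transverse intersection of $\gamma$ with $\{u_j\}\times\partial\Sigma$ inside the oriented surface $\partial N$. Since $\{u_j\}\times\partial\Sigma$ is isotopic in $\partial N$ to $\partial\Sigma$ with its induced orientation, summing over $p$ and over the $|k|$ slices gives
\[
\deg(\phi_k|_\gamma) \;=\; k\,\langle\partial\Sigma,\gamma\rangle_{\partial N},
\]
which combined with Step~1 yields the statement.

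\emph{Main obstacle.} The only nontrivial point is bookkeeping the orientations: one must check that the coorientation of $\phi_k^{-1}(e^{i\theta_0})\cap\partial N$ coming from $d\phi_k$ matches, up to the factor $\mathrm{sgn}(k)$, the coorientation of $\partial\Sigma$ in $\partial N$ dictated by the outward-normal-first convention applied to $\Sigma\hookrightarrow N$ and $\partial N\hookrightarrow N$, and the orientation of $[-1,1]\times\Sigma$ chosen in Definition~\ref{def_twist}. This is a routine but delicate sign check; once it is settled the counting argument of Steps~3--4 is essentially immediate.
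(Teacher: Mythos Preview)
The paper does not actually prove this lemma: it is one of the four ``easy lemmas'' (along with Lemmas~\ref{lem_deg_sum}, \ref{lem_deg_mer}, \ref{lem_deg_ext}) that are stated without proof. Your argument supplies a correct and standard proof. The reduction via Lemma~\ref{lem_deg_sum} in Step~1 is exactly right, and the identification of $d_\gamma(Y,Y(\Sigma,k))$ with the degree of the explicit map $\phi_k|_\gamma$ is clean. The regular-value count in Steps~3--4 is the natural way to finish; an equivalent and slightly slicker alternative is to write $\deg(\phi_k|_\gamma)=\int_\gamma \phi_k^*(d\theta/2\pi)=k\int_\gamma (\kiS{-1}{1})'(u)\,du$ and observe that the last integral is precisely the signed count of crossings of $\gamma$ through the collar $[-1,1]\times\partial\Sigma$, i.e.\ $\langle\partial\Sigma,\gamma\rangle_{\partial N}$. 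Your assessment of the main obstacle is accurate: the content of the lemma is entirely in the orientation bookkeeping, and once the sign is pinned down the rest is automatic.
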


\begin{lema}\label{lem_Euler_knot}
Let $E$ be as in Lemma~\ref{lem_Euler}. Let $K$ be a smooth knot in $M\setminus V_\infty$ that represents the Poincaré dual to $e(E)$. There exists a section $\Tilde{Y}$ of $D(E)$ equal to $1$ on $V_\infty \setminus\{\infty\}$ such that $L_{\Tilde{Y}}=K$.

If $e(E)=0$, then there exists a smooth section of $E$ which is constant equal to $1$ on $V_\infty \setminus\{\infty\}$.
\end{lema}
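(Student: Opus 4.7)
The second assertion follows from the first applied to the empty knot $K = \emptyset$: the resulting $\Tilde{Y}$ is then nowhere zero, and normalising it fibrewise produces a section of $E$ still equal to $1$ on $V_\infty$ (since $|1| = 1$ in the trivialisation there). I therefore focus on the first assertion.

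The plan is to build an explicit $D^2$-bundle $F$ over $\check{M}$ together with a section $s$ whose zero locus is exactly $K$, and then transport $s$ to $D(E)$ through an isomorphism $F \cong D(E)$ that intertwines the trivialisations on $V_\infty$. Choose a tubular neighbourhood $N(K) \cong D^2 \times K$ of $K$ in $M \setminus V_\infty$, using a trivialisation of the normal bundle of $K$ (available since every oriented rank two bundle over $S^1$ is trivial). Set $U_1 = \check{M} \setminus K$ and $U_2 = N(K)$, and clutch trivial $D^2$-bundles along $U_1 \cap U_2 = (D^2 \setminus \{0\}) \times K$ via a transition of the form $\phi(v, k) = \bar{v}/|v| \in S^1 \subset SO(2)$, with the sign chosen so that the winding of $\phi$ around a meridian of $K$ yields $e(F) = \mathrm{PD}(K) = e(E)$ in $H^2(\check{M}; \mathbb{Z})$. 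Define $s$ by $s(v, k) = v$ in the $U_2$-trivialisation: it is transverse, vanishes precisely on $K$, and its coorientation matches the fibre of $F$ in the sense of Lemma~\ref{lem_Euler}. On the overlap, the transition produces $\phi \cdot v = |v|$, which equals $1$ on $\partial N(K)$; I extend $s$ to $U_1 \setminus U_2$ as the constant $1$ and smooth across $\partial N(K)$ using a radial cutoff supported in $N(K)$ that does not introduce new zeros.

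Oriented $D^2$-bundles over $\check{M}$ equipped with a trivialisation on $V_\infty$ are classified up to compatible isomorphism by the relative Euler class in $H^2(\check{M}, V_\infty; \mathbb{Z})$, and the long exact sequence of the pair together with the contractibility of $V_\infty$ identifies this group with $H^2(\check{M}; \mathbb{Z})$. Since $e(F) = e(E)$, there exists an isomorphism $\psi \colon F \to D(E)$ matching the two trivialisations on $V_\infty$, and I set $\Tilde{Y} := \psi \circ s$. The main technical point is coordinating the sign conventions so that simultaneously $e(F) = e(E)$, the section $s_1 = \phi \cdot s_2$ equals $1$ on $\partial N(K)$, and $L_s = K$ receives the coorientation prescribed in Lemma~\ref{lem_Euler}; once the orientation of $K$ and the fibrewise orientation of $D(E)$ are fixed, these constraints determine $\phi$ and the normal framing of $K$ uniquely. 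The vanishing $H^1(\check{M}; \mathbb{Z}) = 0$, which holds because $M$ is a rational homology sphere, ensures that $\psi$ is canonical up to homotopy, making the whole construction well-defined.
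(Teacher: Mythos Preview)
Your argument is correct (modulo the sign bookkeeping you yourself flag), but it takes a genuinely different route from the paper. The paper never builds an auxiliary bundle or invokes classification of $S^1$-bundles; it works entirely inside $D(E)$. Starting from an arbitrary section $Y$ supplied by Lemma~\ref{lem_Euler}, it chooses tubular neighbourhoods of $L_Y$ and $K$, uses $[K]=[L_Y]$ in $H_1(M;\Z)$ to find a properly embedded surface $S$ in their common complement with $\partial S = K_{//}-(L_Y)_{//}$, and then \emph{twists} $Y$ along $S$ in the sense of Definition~\ref{def_twist}. The degree calculus of Lemmas~\ref{lem_deg_mer}--\ref{lem_deg_twist} shows that the twisted section $Y(S,1)$ extends without zeros across $N(L_Y)$ and can be further extended across $N(K)$ with zero locus exactly $K$; the case $e(E)=0$ falls out by taking $\partial S = -(L_Y)_{//}$ alone. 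Your approach is conceptually cleaner and shorter, while the paper's is entirely self-contained with the elementary tools it has just set up and will reuse in Section~\ref{sec_concrete_multisection}. Two minor remarks on your write-up: the final sentence about $H^1(\check{M};\Z)=0$ is irrelevant to existence, since \emph{any} isomorphism $\psi$ matching the trivialisations over $V_\infty$ already produces a valid $\Tilde{Y}$; and the tidiest way to settle your sign worry is to note that Lemma~\ref{lem_Euler} applied to $F$ and $s$ forces $e(F)=\mathrm{PD}[K]$ for the coorientation $K$ inherits from the fibres of $F$, so one simply orients the $D^2$-factor of $N(K)\cong D^2\times K$ to make this agree with the given orientation of $K$.
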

\begin{proof}
 Let $Y$ be a section of $X^\perp$ as in Lemma~\ref{lem_Euler}. Let $N(L_Y)$ and $N(K)$ be closed tubular neighbourhoods of $L_Y$ and $K$ in $M\setminus V_\infty$. Let $N$ denote the closure of $M\setminus (N(L_Y) \cup N(K) \cup V_\infty)$ in $M$. As $[K]=[L_Y]$ in $H_1(M,\Z)$, there exists parallels $K_{//}\subset \partial N(K)$ and $(L_Y)_{//}\subset \partial N(L_Y)$ of $K$ and $L_Y$ and a smooth oriented surface $S$ properly embedded in $N$ such that $\partial S = K_{//} -(L_Y)_{//}$. When $e(E)=0$, choose $S$ such that $\partial S = -(L_Y)_{//}$.
 
 Let $[-1,1] \times S \to N$ be a smooth orientation-preserving embedding which restricts as the identity from $\{0\} \times S$ to $S$ and maps $[-1,1]\times \partial S$ to $\partial N$. Consider the section $Y(S,1)$ of $E$ over $N$.  It can be extended over $N(L_Y)$ as a nowhere vanishing section of $D(E)$. The result follows when $e(E)=0$. When this is not the case, the section $Y(S,1)$ can be further extended as a section of $D(E)$ over $N(K)$ so that $L_{Y(S,1)}=K$.
 \end{proof}

\begin{rquea}\label{rque_Euler_quotient}
 Let $E$ be as in \ref{lem_Euler}. Let $p\in\N\setminus\{0\}$. Equip $E_p$ with the induced trivialisation over $\check{V}_\infty$. One could easily show that $e(E_p)=pe(E)$ and use this fact
 to prove Lemma~\ref{lem_exist_section}.
\end{rquea}

\subsection{A multisection construction}
\label{sec_concrete_multisection}

Let $p\in\N\setminus\{0\}$ be a multiple of $\order(X)$. In this section we construct a $p$-multisection of $UX^\perp$ and prove Lemma~\ref{lem_exist_section}. We will use this construction in the proof of Theorem~\ref{thm_inv_var}. The following lemma is a direct corollary of Lemma~\ref{lem_Euler_knot}. See Figure~\ref{fig_setting}.

\begin{lema}\label{lem_exist_knot}
 Let $a,b,\epsilon$ be real numbers with $a<b$ and $\epsilon < \max(1,b-a)/4$. There exists an oriented knot $K\subset M\setminus V_\infty$ and a neighbourhood $N(K) \subset M\setminus V_\infty$ of $K$ equipped with an orientation-preserving diffeomorphism $\Psi \colon [a,b]\times S^1\times [-1,1] \to N(K)$ with the following properties.
 \begin{itemize}
 \item The element $[K]$ of $H_1(M;\Z)$ is Poincaré dual to $-e(UX^\perp)$.\footnote{We take $-e(UX^\perp)$ instead of $e(UX^\perp)$ for technical reasons.}
  \item The section $X$ extends as an orthonormal framing $(Y_e,W_e,X)$ of $T\check{M}$ over $E_K=\check{M}\setminus \Psi(]a+\epsilon,b-\epsilon[\times S^1\times ]-1+\epsilon,1-\epsilon[)$ that is the standard framing of $\R^3$ on $\check{V}_\infty$. Let $\tau_e\colon E_K \times \R^3 \to TE_K$ denote the induced parallelization of $E_K$. (The letter \say{e} stands for \say{exterior}.)
  
 \end{itemize}
\end{lema}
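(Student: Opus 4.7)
The plan is to apply Lemma~\ref{lem_Euler_knot} to the principal $S^1$-bundle $E = UX^\perp$ and then repackage the resulting data into the form requested by the statement. The argument reduces to orientation bookkeeping together with a standard tubular neighbourhood trivialisation.

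First, I would choose an oriented knot $K_0 \subset M \setminus V_\infty$ representing the Poincaré dual to $e(UX^\perp)$. Such a knot exists because every class in $H_1(M;\Z)$ is realised by an embedded oriented knot, which can then be isotoped off the $3$-ball $V_\infty$. Applying Lemma~\ref{lem_Euler_knot} with this knot produces a section $\tilde{Y}$ of $D(UX^\perp)$ equal to $1$ on $V_\infty \setminus \{\infty\}$, transverse to the zero section, and with oriented zero set $K_0$. I would then let $K$ denote $K_0$ with the reversed orientation, so that $[K]$ is Poincaré dual to $-e(UX^\perp)$, giving the first bullet.

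Next, I would build the framing. On $\check{M} \setminus K$ the section $\tilde{Y}$ is nowhere zero, so $Y_e := \tilde{Y}/\lVert \tilde{Y} \rVert$ is a smooth section of $UX^\perp$ there, still equal to $(1,0,0)$ on $\check{V}_\infty$. Setting $W_e := X \times Y_e$ (the cross product induced by the oriented Euclidean structure on $T\check{M}$) gives an orthonormal frame $(Y_e, W_e, X)$ of $T\check{M}$ on $\check{M} \setminus K$ that induces the orientation of $M$ and is standard on $\check{V}_\infty$; restricting it produces the desired parallelization $\tau_e$ on any subset of $\check{M} \setminus K$.

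Finally, I would construct $\Psi$. Since the oriented normal $2$-plane bundle of $K$ in $M \setminus V_\infty$ is trivial, the tubular neighbourhood theorem provides an orientation-preserving embedding of $S^1 \times D^2$ onto a closed tubular neighbourhood of $K$. Composing with an orientation-preserving diffeomorphism from $D^2$ onto a smoothing of the rectangle $[a,b] \times [-1,1]$ sending the origin to $\bigl((a+b)/2, 0\bigr)$, and then reordering the factors, yields an orientation-preserving diffeomorphism $\Psi \colon [a,b] \times S^1 \times [-1,1] \to N(K)$ with $K = \Psi(\{(a+b)/2\} \times S^1 \times \{0\})$. Because $(a+b)/2 \in \,]a+\epsilon,b-\epsilon[$ and $0 \in \,]-1+\epsilon,1-\epsilon[$ (the hypothesis $\epsilon < \max(1,b-a)/4$ ensures this), the knot $K$ lies inside $\Psi(]a+\epsilon,b-\epsilon[ \times S^1 \times ]-1+\epsilon,1-\epsilon[)$, so $E_K \subset \check{M} \setminus K$ and $(Y_e,W_e,X)$ restricts to $E_K$ as required. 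I do not anticipate any real obstacle; the only mildly delicate point is the sign convention that forces $-e(UX^\perp)$ rather than $e(UX^\perp)$ in the first bullet.
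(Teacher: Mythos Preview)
Your argument is correct and is exactly what the paper intends: it states that the lemma is a direct corollary of Lemma~\ref{lem_Euler_knot}, and you have spelled out precisely that deduction---produce a section of $D(UX^\perp)$ vanishing transversally along a knot in the right homology class, normalise it off the knot to obtain $(Y_e,W_e,X)$, and take a tubular neighbourhood. One small remark: your parenthetical that ``the hypothesis $\epsilon < \max(1,b-a)/4$ ensures this'' is not literally true as written (it should read $\min$ for both inclusions $(a+b)/2\in\,]a+\epsilon,b-\epsilon[$ and $0\in\,]-1+\epsilon,1-\epsilon[$ to follow), but this is a typo in the paper's hypothesis rather than a flaw in your reasoning.
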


Let $D_m$ be the meridian disk of $N(K)$ identified with $[a,b] \times \{1\} \times [-1,1]$, and oriented as $- [a,b] \times [-1,1]$. Let $m_K\subset \partial N(K)$ denote the oriented curve $\partial D_m$. Let $l_K\subset \partial N(K)$ denote the curve identified with $\{b\} \times S^1 \times \{0\}$, and oriented as $S^1$. When $s$ is a section of $UX^\perp$ over $D_m$, we have $d_{m_K}(s,Y_e)=-1$.

The \emph{linking number} $\lk(L_1,L_2)\in\Q$ of two disjoints links $L_1$ and $L_2$ in $M$ is the algebraic intersection of $L_1$ and a $2$-chain $S$ with rational coefficients whose boundary is $L_2$. A \emph{parallel} of a link $L$ is a link disjoint from $L$ and isotopic to $L$ in a tubular neighbourhood of $L$ in $M$. The \emph{self-linking number} $\slk(a)\in \Q/\Z$ of a class $a\in H_1(M,\Z)$ is defined
to be 
the class modulo $\Z$ of the linking number of a link representative $L_1$ of $a$ and a parallel $(L_1)_{//}$ of $L_1$.

\begin{lema}\label{lem_exist_surface}
There exists a compact surface $\Sigma$ and a proper embedding of $[-1,1]\times \Sigma$ into $E_K\setminus V_\infty$ such that the homology class of $ (\{0\}\times \Sigma) \cap \partial N(K)$ in $H_1(\partial N(K);\Z)$ is $p[l_K]+q[m_K]$ for some $q\in\Z$.  The equality $\slk(K)=-q/p$ holds in $\Q/\Z$. Let $N(\Sigma)$ denote the image of $[-1,1]\times \Sigma$.
\end{lema}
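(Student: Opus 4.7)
The plan is to first use the Euler-class hypothesis to produce a two-chain in the exterior of $N(K)$ with the correct boundary class on $\partial N(K)$, upgrade it to a properly embedded two-sided surface with a bicollar, and finally identify $\slk(K)=-q/p$ by a direct linking computation. Since $p$ is a multiple of $\order(X)$, we have $p\,e(UX^\perp)=0$ in $H^2(M;\Z)$, and Poincaré duality, together with the fact that $[K]$ is Poincaré dual to $-e(UX^\perp)$, gives $p[K]=0$ in $H_1(M;\Z)$. Choose an integer two-chain $\Sigma_0$ in $M$ with $\partial\Sigma_0=pK$, push it off the point $\infty$ by general position, and make it transverse to $\partial N(K)$. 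Set $\Sigma_1=\Sigma_0\cap E_K$ and $\gamma=\partial\Sigma_1\subset\partial N(K)$. The complementary piece $\Sigma_0\cap N(K)$ realises $p[K]$ as the image of $[\gamma]$ under the inclusion $H_1(\partial N(K);\Z)\to H_1(N(K);\Z)$; since this map sends $[l_K]\mapsto[K]$ and has kernel $\Z\langle[m_K]\rangle$, we deduce $[\gamma]=p[l_K]+q[m_K]$ in $H_1(\partial N(K);\Z)$ for some $q\in\Z$.

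To upgrade $\Sigma_1$ to a properly embedded surface, I would invoke the standard three-manifold-topology fact that every class in $H_2(N,\partial N;\Z)$ of a compact oriented three-manifold $N$ is represented by a properly embedded oriented surface: via Poincaré--Lefschetz duality one has $H_2(N,\partial N;\Z)\cong H^1(N;\Z)\cong[N,S^1]$, and the preimage of a regular value of a smooth representative is such a surface. Applied to a compact version of $E_K\setminus V_\infty$, and absorbing the contribution on the spherical component $\partial V_\infty$ (which is trivial since $H_1(S^2;\Z)=0$), this yields a compact oriented surface $\Sigma$ properly embedded in $E_K\setminus V_\infty$ with $\partial\Sigma\subset\partial N(K)$ of class $p[l_K]+q[m_K]$. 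Orientability of $\Sigma$ inside the oriented three-manifold $E_K\setminus V_\infty$ makes $\Sigma$ two-sided, so it admits a bicollar $[-1,1]\times\Sigma\hookrightarrow E_K\setminus V_\infty$ that can be chosen to be a proper embedding.

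For the self-linking, consider the rational two-chain $C=\tfrac{1}{p}\Sigma-\tfrac{q}{p}D_m$ in $M$. Its boundary on $\partial N(K)$ is $l_K+\tfrac{q}{p}m_K-\tfrac{q}{p}m_K=l_K$, so $C$ is a rational Seifert chain for the parallel $l_K$ of $K$. Now $\Sigma\subset E_K$ is disjoint from $K$, and $D_m$ meets the core $K$ transversely in a single point with intersection sign $+1$ (a short check in the $\Psi$-coordinates of Lemma~\ref{lem_exist_knot}, compatible with the orientation conventions fixed just before the statement). Therefore $\lk(K,l_K)=\langle K,C\rangle=-q/p$, giving $\slk(K)=-q/p$ in $\Q/\Z$ as required. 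The main obstacle is the representability step: realising the singular two-chain $\Sigma_1$ as a properly embedded oriented surface with the correct boundary homology class requires a careful use of three-manifold topology, after which the rest is bookkeeping of signs and orientations.
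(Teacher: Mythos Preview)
Your argument is correct and follows the same route as the paper: use $p[K]=0$ to find a $2$-chain bounding $pK$, intersect with $E_K$, and read off the boundary class on $\partial N(K)$. The paper's proof is extremely terse (it simply asserts that the restriction can be taken to be a properly embedded surface and omits the $\slk$ computation entirely), whereas you spell out the representability step via $H_2(N,\partial N;\Z)\cong[N,S^1]$ and carry out the linking computation with the rational chain $C=\tfrac{1}{p}\Sigma-\tfrac{q}{p}D_m$; both additions are sound, and your sign check $\langle K,D_m\rangle=+1$ is consistent with the orientation conventions fixed before the lemma.
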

\begin{proof}
 We have $p [K]=0$ in $H_1(M;\Z)$. Thus there exists a $2$-chain $S$ of $M$ such that $\partial S = p K$. We can and do assume that $S \cap E_K$ is a surface $\Sigma$ properly embedded in $E_K$, which does not meet $V_\infty$.
 \end{proof}

Throughout the article we assume that $|q|<p$ without loss.

Let $Y^p_e=f_p \circ Y_e$. Recall Definition \ref{def_twist}. Let $u$ denote the projection from $N(\Sigma)$ onto $[-1,1]$. Let $\Tilde{Y}^p_e$ denote the section of $UX^\perp_p$ on $E_K$ defined to be $\Tilde{Y}^p_e=Y^p_e(\Sigma,1)$.

\begin{lema}\label{lem_concrete_multi}
 The restriction of the multisection $\Tilde{Y}^p_e$ to $N(K) \cap E_K$ extends as a $p$-multisection $\Tilde{Y}^p_N$ of $UX^\perp$ over $N(K)$. 
\end{lema}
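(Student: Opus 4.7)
The plan is to apply Lemma~\ref{lem_deg_ext} to the solid torus $N(K)$ with the $S^1$-principal bundle $UX^\perp_p$. Since $N(K)\cap E_K$ is a collar of $\partial N(K)$ in $N(K)$, it suffices to construct a section of $UX^\perp_p$ on $N(K)$ extending $\Tilde{Y}^p_e|_{\partial N(K)}$; a small homotopy in the collar then matches it with $\Tilde{Y}^p_e$ on all of $N(K)\cap E_K$. By Lemma~\ref{lem_deg_ext}, the extension exists if and only if $d_{m_K}(s_p,\Tilde{Y}^p_e)=0$ for some section $s_p$ of $UX^\perp_p$ over the meridian disk $D_m$.

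I would pick any section $s$ of $UX^\perp$ over $D_m$ (which exists as $D_m$ is contractible) and set $s_p=f_p\circ s$. By Lemma~\ref{lem_deg_sum}, I split
\[ d_{m_K}(s_p,\Tilde{Y}^p_e)=d_{m_K}(s_p,Y^p_e)+d_{m_K}(Y^p_e,\Tilde{Y}^p_e). \]
For the first term, write $Y_e=\zeta\cdot s$ along $m_K$ with $\zeta\colon m_K\to S^1$ of degree $d_{m_K}(s,Y_e)=-1$. Because $f_p$ is an $S^1$-bundle morphism with respect to the $S^1$-action on $UX^\perp_p$ induced by the isomorphism $S^1/\U_p\cong S^1$, $z\mapsto z^p$, the relation $Y_e=\zeta\cdot s$ becomes $Y^p_e=\zeta^p\cdot s_p$ in $UX^\perp_p$. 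Since $\zeta^p\colon m_K\to S^1$ has degree $-p$, we obtain $d_{m_K}(s_p,Y^p_e)=-p$.

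For the second term, Lemma~\ref{lem_deg_twist} applied with $k=1$, $\gamma=m_K$, and $N$ a compact piece of $E_K$ containing $\Sigma$ properly embedded and $m_K$ on its boundary, gives
\[ d_{m_K}(Y^p_e,\Tilde{Y}^p_e)=\langle\partial\Sigma,m_K\rangle_{\partial N(K)}. \]
Using $[\partial\Sigma]=p[l_K]+q[m_K]$ in $H_1(\partial N(K);\Z)$ from Lemma~\ref{lem_exist_surface} together with the torus intersections $\langle l_K,m_K\rangle=1$ and $\langle m_K,m_K\rangle=0$, this evaluates to $p$. Summing the two contributions yields $d_{m_K}(s_p,\Tilde{Y}^p_e)=-p+p=0$, and Lemma~\ref{lem_deg_ext} produces the desired extension $\Tilde{Y}^p_N$.

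The main obstacle is the orientation bookkeeping: the whole cancellation rests on $\langle l_K,m_K\rangle=+1$ and $d_{m_K}(s,Y_e)=-1$ with the conventions set up in Lemma~\ref{lem_exist_knot} (which is precisely why the paper takes $[K]$ to be Poincaré dual to $-e(UX^\perp)$ rather than $+e(UX^\perp)$), and on applying Lemma~\ref{lem_deg_twist} with the correct boundary orientation on $\partial N(K)$ so that the twist by $+1$ exactly compensates the monodromy $-p$ of $Y^p_e$ along $m_K$.
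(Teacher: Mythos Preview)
Your proof is correct and follows essentially the same route as the paper: reduce to Lemma~\ref{lem_deg_ext}, compute $d_{m_K}(s_p,Y^p_e)=-p$ from $d_{m_K}(s,Y_e)=-1$ via the $p$-fold quotient, and use Lemma~\ref{lem_deg_twist} to pick up the compensating $+p$. The paper applies Lemma~\ref{lem_deg_twist} directly with the reference section $s$ (getting $d_{m_K}(s,\Tilde{Y}^p_e)=-p+p$ in one step) rather than splitting via $Y^p_e$ and Lemma~\ref{lem_deg_sum}, but this is only a cosmetic difference; your explicit justification of why the degree scales by $p$ under $f_p$ is a detail the paper leaves implicit.
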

\begin{proof}
Let $s$ be a section of $UX^\perp_p$ over $D_m$. We have $d_{m_K}(s,Y_e)=-1$. Thus we have $d_{m_K}(s,Y^p_e)=-p$. Applying Lemma~\ref{lem_deg_twist}, we get $d_{m_K}(s,\Tilde{Y}^p_e)=-p+p=0$. The result then follows from Lemma~\ref{lem_deg_ext}.
\end{proof}
This concludes the proof of Lemma~\ref{lem_exist_section}.

We use Lemma~\ref{lem_concrete_multi} as a definition of a $p$-multisection $\Tilde{Y}^p_N$ of $UX^\perp$ over $N(K)$ when $\slk(K)\neq 0$.

Let us now assume $\slk(K) = 0$ until the end of the section. In this case, we constrain the intersection $N(\Sigma)\cap N(K)$ to satisfy a factorisation property for later use, as in the following lemma. Again, we refer to Figure~\ref{fig_setting} for a schematic of our setting.

\begin{lema}\label{lem_exist_surface_bis}
Assume that $\slk(K)=0$. There exists a compact surface $\Sigma$ and a proper embedding of $[-1,1]\times \Sigma$ into $E_K\setminus V_\infty$ such that
$$ N(\Sigma) \cap N(K) =[a,a+\epsilon] \times S^1 \times\Bigl(\bigcup_{k\in\{1,\dots,p\}}  I_k\Bigr)$$
for a certain collection of pairwise disjoint segments $I_k$ of $[-1+\epsilon,1-\epsilon]$. 
\end{lema}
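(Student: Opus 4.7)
The plan is to start from the surface $\Sigma$ furnished by Lemma~\ref{lem_exist_surface} and then perform a finite sequence of local modifications supported near $\partial N(K)$ until the intersection $N(\Sigma)\cap N(K)$ acquires the announced product form. The hypothesis $\slk(K)=0$ will be used at the outset to simplify the algebraic data on $\partial N(K)$, after which the work is purely of surface-topology type.

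First, since $\slk(K)=-q/p$ in $\Q/\Z$ by Lemma~\ref{lem_exist_surface} and $\slk(K)=0$, the integer $q$ must be a multiple of $p$; combined with the running assumption $|q|<p$, this forces $q=0$. Consequently $[(\{0\}\times\Sigma)\cap\partial N(K)]=p[l_K]$ in $H_1(\partial N(K);\Z)$, a purely longitudinal class.

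Next, I reshape the intersection curve $C=(\{0\}\times\Sigma)\cap\partial N(K)$ so that it becomes $p$ parallel longitudes on the prescribed face. The components of $C$ are disjoint simple closed curves on the torus $\partial N(K)$. Any null-homotopic component bounds an innermost disk in $\partial N(K)$, which after a small push-off into $E_K$ can be used to compress $\Sigma$; this operation does not change the homology class of $C$ in $H_1(\partial N(K);\Z)$. The remaining essential components are pairwise parallel on the torus, and since $[l_K]$ is primitive in $H_1(\partial N(K);\Z)$, each represents $\pm[l_K]$. Oppositely oriented pairs are then cancelled by oriented saddle surgeries along short bands in $\partial N(K)$ pushed slightly into $E_K\setminus V_\infty$, each such surgery reducing the number of components by two while preserving the total class. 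Iterating produces exactly $p$ parallel longitudes, which a small isotopy of $\partial N(K)$ extended to an ambient isotopy of $\check{M}$ supported away from $V_\infty$ moves onto the face $\{a\}\times S^1\times[-1,1]$ at pairwise distinct heights $u_1,\dots,u_p$ chosen inside $(-1+\epsilon,1-\epsilon)$.

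Finally, a local straightening followed by a careful choice of collar gives the required product form. Near each longitude $\{a\}\times S^1\times\{u_k\}$, an ambient isotopy supported in a small neighborhood inside $E_K$ replaces the portion of $\Sigma$ entering $N(K)$ by the horizontal annulus $A_k=[a,a+\epsilon]\times S^1\times\{u_k\}$, whose remaining boundary $\{a+\epsilon\}\times S^1\times\{u_k\}$ lies on $\partial E_K$. I then select the proper embedding $[-1,1]\times\Sigma\hookrightarrow E_K\setminus V_\infty$ so that, along each $A_k$, the interval parameter $[-1,1]$ is identified with the $u$-coordinate of $N(K)$. Picking pairwise disjoint closed intervals $I_k\subset[-1+\epsilon,1-\epsilon]$ containing the $u_k$, the thickening of $A_k$ inside $N(K)$ becomes $[a,a+\epsilon]\times S^1\times I_k$, and taking the union over $k$ yields the claimed equality. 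The delicate step is the organisation of $C$ into $p$ parallel longitudes: it combines innermost-disk compressions, saddle surgeries, and a final isotopy, each of which must be carried out inside $E_K\setminus V_\infty$ without destroying properness of the collar embedding. The subsequent straightening and the choice of intervals $I_k$ are then routine.
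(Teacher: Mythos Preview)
The paper states this lemma without proof, evidently regarding it as a routine strengthening of Lemma~\ref{lem_exist_surface} once $q=0$ is known. Your argument supplies the missing details and is correct: from $\slk(K)=-q/p\in\Z$ and the standing assumption $|q|<p$ you deduce $q=0$, so the intersection curve on the torus represents $p[l_K]$; innermost-disk compressions remove inessential components, oriented saddle surgeries followed by further compressions cancel oppositely oriented essential pairs, leaving exactly $p$ coherently oriented longitudes; and an isotopy onto the $x=a$ face together with a compatible choice of collar yields the product form.

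One small imprecision worth noting: you phrase everything in terms of the curve $C=(\{0\}\times\Sigma)\cap\partial N(K)$, but $\Sigma$ is properly embedded in $E_K$, so its actual boundary lies on $\partial E_K$, the slightly shrunken torus $\partial\bigl([a+\epsilon,b-\epsilon]\times S^1\times[-1+\epsilon,1-\epsilon]\bigr)$. The cleanest reading of your argument is to first restrict $\Sigma$ to the exterior of $N(K)$, perform the compressions and surgeries on that surface (whose boundary genuinely sits on $\partial N(K)$), and only afterwards extend by the product annuli $[a,a+\epsilon]\times S^1\times\{u_k\}$ back into $N(K)\cap E_K$. With that adjustment the collar choice aligning the $[-1,1]$ parameter with the last coordinate of $N(K)$ is unproblematic, and the conclusion follows.
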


In the next definition, we lift the restriction of $\Tilde{Y}^p_e$ over $N(K) \cap E_K$ to a section of $UX^\perp$ over
$N(K) \cap E_K$.

\begin{defia}
  Assume that $\slk(K)=0$. We define a section  $\Tilde{Y}_N$ of $UX^\perp$ over $N(K) \cap E_K$ as follows. Let $k\in\{1,\dots,p\}$. On $[a,a+\epsilon]\times S^1 \times I_k \subset N(K)\cap N(\Sigma)$, we set
  $$\Tilde{Y}_N= R_{\frac{2\pi}{p}\kiS{-1}{1}(u)} \circ R_{\frac{2(k-1)\pi}{p}} \circ Y_e.$$
  Let $C_1$ denote the connected component of $(N(K) \cap E_K) \setminus N(\Sigma)$ that contains $(b,1,0)$. Number the connected components of $(N(K) \cap E_K) \setminus N(\Sigma)$ with integers in $\{1,\dots,p\}$ starting from $C_1$ ang turning around $N(K)\cap E_K$ according to the orientation of $m_K$. On $C_k$, we set
  $$\Tilde{Y}_N= R_{\frac{2(k-1)\pi}{p}} \circ Y_e.$$
\end{defia}

We use the next lemma as a definition of a parallelization $\tau_N^{\mbox{\scriptsize multi}}$ of $N(K)$.

\begin{lema}\label{lem_para_int}
   Assume that $\slk(K)=0$. The section $\Tilde{Y}_N$ of $UX^\perp$ over $N(K) \cap E_K$ extends as a section of $UX^\perp$ over $N(K)$. We again denote this last section by $\Tilde{Y}_N$. 
  
  Complete $(X,\Tilde{Y}_N)$ into an orthonormal framing $(X,\Tilde{Y}_N,W_N)$ of $TN(K)$. Let $\tau_N^{\mbox{\scriptsize multi}}$ be the parallelization of $N(K)$ associated with $(\Tilde{Y}_N,W_N,X)$.
 \end{lema}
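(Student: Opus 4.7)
The plan is to apply Lemma~\ref{lem_deg_ext} to fill in $\Tilde{Y}_N$ across the inner open solid torus $T_{\mathrm{int}}=\Psi(]a+\epsilon,b-\epsilon[\times S^1\times\;]-1+\epsilon,1-\epsilon[)$, whose boundary sits inside $N(K)\cap E_K$ and whose complement in $N(K)$ is the shell where $\Tilde{Y}_N$ is already defined. Since $\overline{T_{\mathrm{int}}}$ is a solid torus with meridian homotopic to $m_K=\partial D_m$, an extension exists if and only if $d_{m_K}(s,\Tilde{Y}_N)=0$ for a section $s$ of $UX^\perp$ over a meridian disk. The whole lemma reduces to verifying this single degree identity, after which the frame completion is routine.

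For that verification, the cleanest route is to push down to the quotient bundle $UX^\perp_p$. The covering map $f_p$ induces $S^1\to S^1/\U_p\cong S^1$ on fibers via $z\mapsto z^p$, so degrees along $m_K$ are multiplied by $p$: one has $p\cdot d_{m_K}(s,\Tilde{Y}_N)=d_{m_K}(f_p\circ s,f_p\circ\Tilde{Y}_N)$. By construction $f_p\circ\Tilde{Y}_N=\Tilde{Y}^p_e$ on $N(K)\cap E_K$, and the vanishing $d_{m_K}(f_p\circ s,\Tilde{Y}^p_e)=0$ was already established inside the proof of Lemma~\ref{lem_concrete_multi}: there, Lemma~\ref{lem_deg_twist} applied to the $1$-twist $\Tilde{Y}^p_e=Y^p_e(\Sigma,1)$ shows that the contribution $\langle\partial\Sigma,m_K\rangle_{\partial N(K)}=p$, coming from the $p$ longitude-parallel sheets arranged by Lemma~\ref{lem_exist_surface_bis} (valid because $\slk(K)=0$, i.e., $q=0$), exactly cancels $d_{m_K}(f_p\circ s,Y^p_e)=p\cdot d_{m_K}(s,Y_e)=-p$. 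Hence $d_{m_K}(s,\Tilde{Y}_N)=0$, and Lemma~\ref{lem_deg_ext} delivers the extension.

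Equivalently, one can verify the identity directly in $UX^\perp$ by tracing $m_K$: it passes cyclically through $C_1,C_2,\dots,C_p$ back to $C_1$, crossing $\Sigma$ in $p$ consistently-oriented points; on each $C_k$, $\Tilde{Y}_N=R_{2(k-1)\pi/p}\circ Y_e$, and across each strip the section interpolates continuously by $R_{(2\pi/p)\kiS{-1}{1}(u)}\circ R_{2(k-1)\pi/p}\circ Y_e$, accumulating a relative phase of $2\pi/p$ per crossing, for a total of $2\pi$, i.e.\ $d_{m_K}(Y_e,\Tilde{Y}_N)=1$; combined with $d_{m_K}(s,Y_e)=-1$ (the sign remark after Lemma~\ref{lem_exist_knot}, itself Lemma~\ref{lem_deg_mer}) and additivity (Lemma~\ref{lem_deg_sum}), the obstruction is again $0$.

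Once $\Tilde{Y}_N$ is extended over $N(K)$, it is a nowhere-vanishing unit section of the oriented rank-$2$ bundle $X^\perp$, so I would complete it uniquely to a positively-oriented $g$-orthonormal frame $(X,\Tilde{Y}_N,W_N)$ of $TN(K)$ by letting $W_N$ be the unique unit vector in $X^\perp$ orthogonal to $\Tilde{Y}_N$ with the correct orientation; the parallelization $\tau_N^{\mathrm{multi}}$ is then the one associated to the reordered frame $(\Tilde{Y}_N,W_N,X)$. The main obstacle is the degree identity, where the per-sheet rotation $R_{(2\pi/p)\kiS{-1}{1}(u)}$ built into $\Tilde{Y}_N$ must exactly lift the quotient-level cancellation of Lemma~\ref{lem_concrete_multi} to the total space $UX^\perp$; the rest is standard solid-torus extension machinery from Section~\ref{sec_Euler}.
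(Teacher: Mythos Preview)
Your proposal is correct. The direct verification in your third paragraph is exactly the paper's proof: one notes $d_{m_K}(s,Y_e)=-1$, computes $d_{m_K}(Y_e,\Tilde{Y}_N)=1$ by tracking the $p$ rotations of angle $2\pi/p$ accumulated along $m_K$, adds via Lemma~\ref{lem_deg_sum}, and invokes Lemma~\ref{lem_deg_ext}.

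Your quotient-bundle route in the second paragraph is a small genuine alternative the paper does not take. It exploits the already-proved vanishing $d_{m_K}(f_p\circ s,\Tilde{Y}^p_e)=0$ from Lemma~\ref{lem_concrete_multi} together with the degree-multiplication $d_{m_K}(f_p\circ s,f_p\circ\Tilde{Y}_N)=p\,d_{m_K}(s,\Tilde{Y}_N)$ and the identity $f_p\circ\Tilde{Y}_N=\Tilde{Y}^p_e$ on $N(K)\cap E_K$. This is clean because it recycles the earlier lemma rather than redoing the count; the paper's direct count is marginally more elementary since it avoids invoking the $p$-fold covering behaviour of $f_p$ on degrees. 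Both lead to the same one-line application of Lemma~\ref{lem_deg_ext}, and the frame completion step is, as you say, routine.
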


\begin{proof}
 Let $s$ be a section of $UX^\perp$ over $D_m$. We have $d_{m_K}(s,Y^e)=-1$. We compute $d_{m_K}(Y^e,\Tilde{Y}_N)=1$. Lemma~\ref{lem_deg_sum} implies $d_{m_K}(s,\Tilde{Y}_N)=0$. The result follows from Lemma~\ref{lem_deg_ext}.
 \end{proof}

 We define a $p$-multisection $\Tilde{Y}^p_N$ of $UX^\perp$ over $N(K)$ as $\Tilde{Y}^p_N = f_p \circ \Tilde{Y}_N$ when $\slk(K)=0$.

 Let $\Tilde{Y}^p$ denote the globally defined $p$-multisection of $X^\perp$ that restricts as $\Tilde{Y}_e^p$ on $E_K$ and as $\Tilde{Y}_N^p$ on $N(K)$. 

\begin{figure}[h]
 \centering
 \begin{tikzpicture}
  \useasboundingbox (-4,-4) rectangle (4,4);
  
  \fill[lightgray!50] (-3,-3) rectangle (3,3);
   \fill[lightgray!70] (-4,-2) rectangle (-2.5,-1);
     \fill[lightgray!70] (-4,1) rectangle (-2.5,2);
     
    \draw (-4.25,1.5) node{\scriptsize $\Sigma$};
    \draw (-3.5,1.5) node[below]{\scriptsize $N(\Sigma)$};
  
  \draw (-3,-3) rectangle (3,3);
  \draw[dashed] (-3,-2.5)--(3,-2.5);
   \draw[dashed] (-3,2.5)--(3,2.5);
   \draw[dashed] (-2.5,-3)--(-2.5,3);
    \draw[dashed] (2.5,-3)--(2.5,3);
    
\draw[dashed] (-4,-2)--(-2.5,-2);
\draw (-4,-1.5)--(-2.5,-1.5);
\draw[dashed] (-4,-1)--(-2.5,-1);

\draw[dashed] (-4,2)--(-2.5,2);
\draw (-4,1.5)--(-2.5,1.5);
\draw[dashed] (-4,1)--(-2.5,1);

\draw (0,0) node{$\times$} node[below]{\scriptsize $K$};

\draw (-3,-3) node[below]{\scriptsize $a$};
\draw (-2.5,-3) node[below]{\scriptsize $a+\epsilon$};
 \draw (2.5,-3) node[below]{\scriptsize $b-\epsilon$}; 
 \draw (3,-3) node[below]{\scriptsize $b$};
 
 \draw (3,-3) node[right]{\scriptsize $-1$};
 \draw (3,-2.5) node[right]{\scriptsize $-1+\epsilon$};
  \draw (3,2.5) node[right]{\scriptsize $1-\epsilon$};
   \draw (3,3) node[right]{\scriptsize $1$};

   \draw[very thick] (3,-2)--(3,-1) node[at start]{$-$} node[at end]{$-$};
    \draw (3,-1.5) node[right]{\scriptsize$I_1$};
 \draw[very thick] (3,2)--(3,1) node[at start]{$-$} node[at end]{$-$};
    \draw (3,1.5) node[right]{\scriptsize$I_p$};
\draw (-3.5,0) node[rotate=90]{$\cdots$};

\draw[->] (4,0)--(4.5,0) node[at end, below]{\scriptsize$Y_e$};
\draw[->] (4,0)--(4.25,0.25) node[at end, right]{\scriptsize$W_e$};
\draw[->] (4,0)--(4,0.5) node[at end, left]{\scriptsize$X$};
\draw (5.25,0) node{\scriptsize $=\tau^e$};

\begin{scope}[xshift=-4cm,yshift=-2cm]
\draw[->] (4,0)--(4.5,0) node[at end, below]{\scriptsize$\Tilde{Y}_N$};
\draw[->] (4,0)--(4.25,0.25) node[at end, right]{\scriptsize$W_N$};
\draw[->] (4,0)--(4,0.5) node[at end, left]{\scriptsize$X$};
\draw (5.25,0) node{\scriptsize $=\tau_N^{\mbox{multi}}$};
\end{scope}
 \draw (1.5,-1.5) node{\scriptsize $N(K)$};
 
\draw (-4.75,-1.5) node{$\cdots$};
\draw (-4.75,1.5) node{$\cdots$};

\draw[->] (-5.25,1)-- (-5.25,2) node[midway, above, sloped]{\scriptsize$u\in [-1,1]$};
\draw[->] (-5.25,-2)-- (-5.25,-1) node[midway, above, sloped]{\scriptsize$u\in [-1,1]$};
 \draw[<-] (0.5,0) arc (0:270:0.5);
 \draw (5,1) node{\scriptsize $E_K$};
 
 \draw (0,3) node{ $>$} node[above]{\scriptsize $m_K$};
\draw (3,0) node{ $\times$} node[right]{\scriptsize $l_K$};
 \end{tikzpicture}
\caption{Our setting when $\slk(K)=0$.}
\label{fig_setting}
\end{figure}
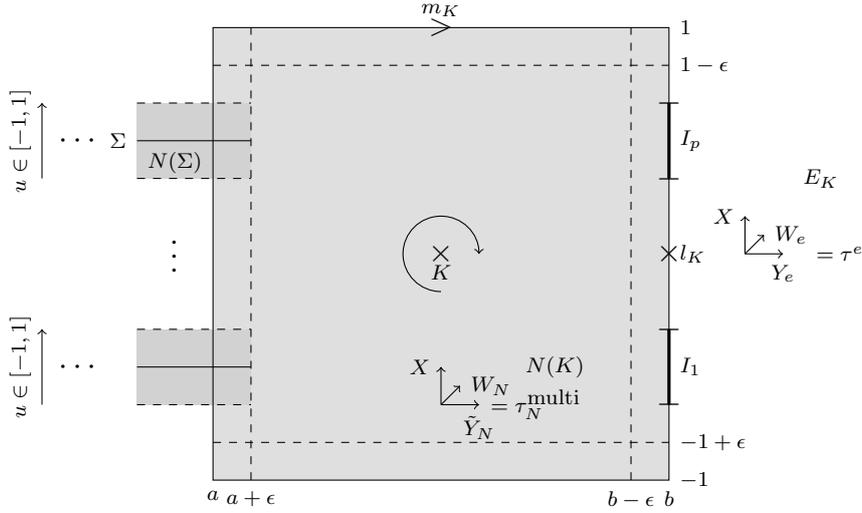

\section{Invariance of \texorpdfstring{$z(M,X)$}{z(M,X)}}
 \label{sec_proof_inv}
In this section, we prove the invariance of $z(M,X)$ stated in Theorem~\ref{thm_inv_combing}.

 \subsection{Smooth paths of parameters}
 
 Let 
 $g_0$ and $g_1$ 
 be two Riemannian metrics on $\check{M}$. Let $g\colon [0,1] \times M \to S^2 T^*M$ be the smooth path of Riemannian metrics on $\check{M}$ defined as $g_t=(1-t)g_0+tg_1$.

 Let $\Xc_0$ and $\Xc_1$ be two representatives of a combing $[X]$. Let $\Xc\colon [0,1]\times \check{M} \to U\check{M}$ be a smooth path
 of
 sections of $U\check{M}$ from $\Xc_0$ to $\Xc_1$ such that $\Xc$ is constant equal to $(0,0,1)$ on $[0,1]\times \check{V}_\infty$.
 
Let $\pi\colon [0,1] \times \check{M} \to \check{M}$ denote the projection on the second factor $\check{M}$. Consider the pull-back bundle $\pi^*(T\check{M})$ over $[0,1]\times \check{M}$. Let $\Xc^{\perp}$ denote the subbundle of $\pi^*(T\check{M})$ whose fiber over a point $(t,m)\in [0,1]\times \check{M}$ is the orthogonal in $T_m\check{M}$ of $\Xc_t(m)$ with respect to the scalar product $g_t(m)$.

 Let $p\in\N\setminus\{0\}$. We define the $S^1$-principal bundle $U\Xc^{\perp}$ over $[0,1]\times \check{M}$ and its quotient $U\Xc^{\perp}_p$ by $\U_p$ as before. They are equipped with a preferred trivialisation over $[0,1]\times \check{V}_\infty$.

Assume that $p$ is a multiple of the order of $e(U\Xc_0^\perp)=e(U\Xc_1^\perp)$. For $j\in\{0,1\}$, let $\nu_{\Xc_j}^p$ be a $p$-multisection of $U(\Xc_j)^{\perp}_p$, where the orthogonal is taken with respect to the metric $g_j$. The proof of Lemma~\ref{lem_exist_section} can be easily adapted
 to show the existence of a section of $U\Xc^{\perp}_p$ that is constant equal to $(1,0,0)$ on $[0,1] \times \check{V}_\infty$ and that restricts to $\nu_{\Xc_j}^p$ on $\{j\}\times\check{M}$ for $j\in\{0,1\}$. Let $\nu^p_\Xc$ denote any such section.

\subsection{Forms associated with paths of parameters}

Let $g,\Xc,p,\nu_\Xc^p$ be as in the above section. 

Forms on $[0,1]\times S^2$ are called $\U_p$-invariant if they are invariant with respect to the action of $\U_p$ given by $\theta \mapsto \mbox{Id}_{[0,1]} \times R_\theta$. 

We define a map $(\mbox{Id}_{[0,1]} \times p_{(\Xc,\nu_\Xc^p)})^*$ from the space of $\U_p$-invariant forms on $[0,1]\times S^2$ to the space of forms on $[0,1]\times \partial C_2(M)$ as before. Let $B$ be an embedded $3$-ball in $\check{M}$. Let $\nu_\Xc$ be a lift of $\nu_\Xc^p$ over $[0,1]\times B$. Complete $(\Xc,\nu_\Xc)$ into a smooth path $(\Xc,\nu_\Xc,w_\Xc)$ of orthormal basis of $\pi^*(TM|_{B})$ with respect to $g$. We get a projection $p_{(\nu_\Xc,w_\Xc,\Xc)}\colon [0,1] \times U\check{M}|_{B} \to [0,1]\times S^2$.  When $\omega$ is an $\U_p$-invariant form on
 $[0,1]\times S^2$, we set
$$(\mbox{Id}_{[0,1]} \times p_{(\Xc,\nu_\Xc^p)})^*(\omega)=(\mbox{Id}_{[0,1]} \times p_{(\nu_\Xc,w_\Xc,\Xc)})^*(\omega)$$
on $[0,1] \times U\check{M}|_{B} \subset [0,1] \times \partial C_2(M)$.

\begin{lema}
 Let $\omega^j$ be a propagating form of $(C_2(M),g_j,\Xc_j,\nu^p_{\Xc_j})$ for $j\in\{0,1\}$. Let $\omega_{S^2}^0,\omega_{S^2}^1$ be two $\U_p$-invariant $2$-forms of volume one of $S^2$ such that
$$\omega^j= p_{(\Xc_j,\nu^p_{\Xc_j})}^*(\omega_{S^2}^j)$$
on $\partial C_2(M)$ for $j\in\{0,1\}$.
Then there exists a closed $\U_p$-invariant $2$-form $\Tilde{\omega}_{S^2}$ on $[0,1]\times S^2$ that restricts to
$\{0\}\times S^2$ as $\omega_{S^2}^0$, and to $\{1\}\times S^2$ as $\omega_{S^2}^1$. 
Let $\omega^{\partial}$ denote the closed $2$-form on $\partial([0,1]\times C_2(M))$ that restricts as
$$\omega^{\partial}=(\mbox{Id}_{[0,1]} \times p_{(\Xc,\nu_\Xc^p)})^*(\Tilde{\omega}_{S^2})$$
 on $[0,1]\times \partial C_2(M)$, and that restricts as $\omega^j$  on $\{j\}\times C_2(M)$ for $j\in\{0,1\}$. There exists a closed $2$-form $\omega$ on $[0,1]\times C_2(M)$ whose restriction to $\partial ([0,1]\times  C_2(M))$ is $\omega^{\partial}$.

\end{lema}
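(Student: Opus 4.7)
My plan is to prove this lemma in three steps: construct the interpolating form $\Tilde{\omega}_{S^2}$, check that the prescribed boundary form $\omega^\partial$ is consistent at the corners of $\partial([0,1]\times C_2(M))$, and extend $\omega^\partial$ to a closed form on the interior by a relative de Rham obstruction argument.

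For the first step, I would use that $\omega_{S^2}^0$ and $\omega_{S^2}^1$ are $\U_p$-invariant closed $2$-forms of volume one on $S^2$, hence cohomologous, so $\omega_{S^2}^1-\omega_{S^2}^0 = d\alpha$ for some $1$-form $\alpha$ on $S^2$. Averaging under $\U_p$ yields a $\U_p$-invariant primitive $\alpha^{\mathrm{inv}} = \tfrac{1}{p}\sum_{k=1}^p R^*_{2k\pi/p}\alpha$, still satisfying $d\alpha^{\mathrm{inv}} = \omega_{S^2}^1-\omega_{S^2}^0$ since each $\omega_{S^2}^j$ is $\U_p$-invariant. Pick a smooth $\chi\colon [0,1]\to[0,1]$ that is $0$ near $0$ and $1$ near $1$, and denote by $\pi_{S^2}\colon [0,1]\times S^2 \to S^2$ the projection. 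Then
\[\Tilde{\omega}_{S^2} = \pi_{S^2}^*\omega_{S^2}^0 + d\bigl(\chi(t)\,\pi_{S^2}^*\alpha^{\mathrm{inv}}\bigr)\]
is closed, $\U_p$-invariant, equal to $\omega_{S^2}^0$ at $t=0$ and to $\omega_{S^2}^1$ at $t=1$ (using that $\chi'$ vanishes near the endpoints to kill the $dt$-term there).

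For the second step, $\partial([0,1]\times C_2(M))$ is the union of $\{0\}\times C_2(M)$, $\{1\}\times C_2(M)$ and $[0,1]\times \partial C_2(M)$, with intersection $\{j\}\times \partial C_2(M)$ for $j\in\{0,1\}$. On each such corner both prescriptions for $\omega^\partial$ agree with $p_{(\Xc_j,\nu^p_{\Xc_j})}^*(\omega_{S^2}^j)$, by the construction of $\Tilde{\omega}_{S^2}$ in Step $1$ and the definition of the pull-back operator $p_{(\Xc,\nu_\Xc^p)}^*$, so $\omega^\partial$ is a well-defined smooth closed $2$-form.

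Finally, I would extend $\omega^\partial$ by the standard relative de Rham obstruction. Pick any smooth extension $\Tilde{\omega}$ of $\omega^\partial$ to $[0,1]\times C_2(M)$; then $d\Tilde{\omega}$ vanishes on the boundary and so represents a class in $H^3\bigl([0,1]\times C_2(M),\,\partial([0,1]\times C_2(M));\R\bigr)$. By the relative Künneth formula applied to the pair $\bigl([0,1],\{0,1\}\bigr)\times\bigl(C_2(M),\partial C_2(M)\bigr)$, this group identifies with $H^1\bigl([0,1],\{0,1\};\R\bigr)\otimes H^2\bigl(C_2(M),\partial C_2(M);\R\bigr)\cong H^2(C_2(M),\partial C_2(M);\R)$, which by Lefschetz duality on the $6$-dimensional compact oriented manifold with boundary $C_2(M)$ is isomorphic to $H_4(C_2(M);\R)$. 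Since $C_2(M)$ has the rational homology of $S^2$, this group vanishes, so $d\Tilde{\omega} = d\eta$ for some $2$-form $\eta$ vanishing on the boundary, and $\omega := \Tilde{\omega}-\eta$ is the desired closed extension. The main obstacle is this cohomological vanishing, but it is the natural one-degree lift of the computation $H^3(C_2(M),\partial C_2(M);\R)=0$ already used in the paper to produce ordinary propagating forms.
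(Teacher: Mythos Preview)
Your proof is correct and follows essentially the same approach as the paper: average a primitive of $\omega_{S^2}^1-\omega_{S^2}^0$ over $\U_p$ to build $\Tilde{\omega}_{S^2}$, and then extend over $[0,1]\times C_2(M)$ by showing the relative group $H^3\bigl([0,1]\times C_2(M),\partial([0,1]\times C_2(M));\R\bigr)$ vanishes. The only differences are cosmetic. The paper uses the linear function $t$ in place of your cutoff $\chi$ and does not spell out the corner compatibility, whereas your choice of $\chi$ constant near the endpoints makes the smoothness of $\omega^\partial$ at the corners transparent. For the cohomological vanishing, the paper applies Lefschetz duality directly on the $7$-manifold $[0,1]\times C_2(M)$ to identify the obstruction group with $H_4([0,1]\times C_2(M);\R)=0$, while you factor through relative K\"unneth and then Lefschetz duality on $C_2(M)$; both routes land on $H_4(C_2(M);\R)=0$.
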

\begin{proof}
  Let $\eta$ be any primitive of the difference $\omega_{S^2}^1-\omega_{S^2}^0$. Define a $1$-form $\Tilde{\eta}$ on $S^2$ as
 $$\Tilde{\eta}=\frac{1}{p} \sum_{k\in\{1,\dots,p\}} R_{\frac{2k\pi}{p}}^*(\eta).$$
 Then $\Tilde{\eta}$ is $\U_p$-invariant. Moreover, we have $d\Tilde{\eta}=\omega_{S^2}^1-\omega_{S^2}^0$. Now set
 $$\Tilde{\omega}_{S^2}=p_2^*(\omega_{S^2}^0)+d(tp_2^*(\Tilde{\eta})),$$
 where $t$ is the coordinate on $[0,1]$ and $p_2\colon [0,1]\times S^2 \to S^2$ is the projection on the second factor. The inclusion $\partial ([0,1]\times  C_2(M)) \subset [0,1]\times  C_2(M)$ induces a surjective map $H^2([0,1]\times  C_2(M);\R)\to H^2(\partial([0,1]\times   C_2(M));\R)$. Indeed, the group $H^3([0,1]\times  C_2(M),\partial([0,1]\times   C_2(M)),\R)$ is isomorphic to $H_4([0,1]\times  C_2(M);\R)$ by Poincaré duality. As $H_*(C_2(M);\R)=H_*(S^2;\R)$, this last group is trivial. This shows the existence of a closed $2$-form $\omega$ with the required properties.
\end{proof}

\subsection{Dependence on the propagating forms}

Let $g,\Xc,p,\nu_\Xc^p$ be as in the above section. For $j\in\{0,1\}$, let $(\omega_i^j)_{i\in\{1,\dots,3n\}}$ be a family of propagating forms of $(C_2(M),g_j,\Xc_j,\nu^p_{\Xc_j})$. For $j\in\{0,1\}$, let $z_n(j)$ denote the element
$z_n(M,(\omega_i^j)_{i\in\{1,\dots,3n\}})$
of $\Ac_n^c(\emptyset)$. In this section, we show $z_n(0)=z_n(1)$.

Let $(\omega_{i,S^2}^j)_{i\in\{1,\dots,3n\}}$ be a family of $\U_p$-invariant $2$-forms of volume one on $S^2$ such that
$$\omega^j_i= p_{(X_j,\nu^p_{X_j})}^*(\omega_{i,S^2}^j)$$
on $\partial C_2(M)$ for all $i\in \{1,\dots,3n\}$ and $j\in\{0,1\}$. Let $(\Tilde{\omega}_{i,S^2})_{i\in\{1,\dots,3n\}}$ be a family of $\U_p$-invariant closed $2$-forms on $[0,1]\times S^2$ and let $(\omega_i)_{i\in\{1,\dots,3n\}}$ be a family of closed $2$-forms on $[0,1]\times C_2(M)$ as in the previous lemma.

 For any $(\Gamma,j)$ in $\Dc_n^c(\emptyset)$, let $(C_{V(\Gamma)}(M))_1$ denote the set of open codimension one faces of $C_{V(\Gamma)}(M)$. Let $o(\Gamma)$ be a vertex-orientation of $\Gamma$. For $F$ in $(C_{V(\Gamma)}(M))_1$, let
$I(F,\Gamma,j)[\Gamma]$
denote the element
$$\Bigl(\int_{[0,1]\times F} \bigwedge_{e\in E(\Gamma)} p_e^*(\omega_{j(e)})\Bigr) [\Gamma,o(\Gamma)]$$ of $\Ac_n^c(\emptyset)$, which does not depend on $o(\Gamma)$. We apply Stokes' theorem to each of the forms
$\bigwedge_{e\in E(\Gamma)} p_e^*(\omega_{j(e)})$
on $[0,1]\times C_{V(\Gamma)}(M)$ to get
$$z_n(1)-z_n(0)= \xi_n \sum_{(\Gamma,j)\in \Dc_n^c(\emptyset)} \sum_{F\in (C_{V(\Gamma)}(M))_1} I(F,\Gamma,j)[\Gamma].$$

\begin{notaa}\label{nota_anomalous}
Let $F(V(\Gamma))$ denote the fiber bundle over $\check{M}$ whose fiber over a point $m\in\check{M}$ is the space $\check{S}_{V(\Gamma)}(T_m\check{M})$ of injections of $V(\Gamma)$ into $T_m\check{M}$ up to translation and dilation. The fiber bundle $F(V(\Gamma))$ is identified with an open codimension one face of $C_{V(\Gamma)}(M)$. It is often called the \emph{anomalous face}. For any edge $e\in E(\Gamma)$, the restriction of $p_e$ to the face  $F(V(\Gamma))$ is valued in $U\check{M}$.
\end{notaa}

The next proposition is a direct corollary of Proposition 9.2 of \cite{lesbook}. We will not give its proof. The more general proposition of Lescop gives in particular a formula for the variation $z_n(1)-z_n(0)$ with the only assumption that the forms $\omega_i$ are of a prescribed form on $[0,1]\times ((\partial C_2(M)) \setminus U\check{M})$. We can apply it in our case, as we have stronger assumptions on our forms $\omega_i$.

\begin{propa}\label{prop_les}
We have
 $$z_n(1)-z_n(0)= \xi_n \sum_{(\Gamma,j)\in \Dc_n^{c}(\emptyset)} I(F(V(\Gamma)),\Gamma,j)[\Gamma].$$
\end{propa}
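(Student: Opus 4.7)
The plan is to apply Stokes' theorem to each closed form $\bigwedge_{e\in E(\Gamma)} p_e^*(\omega_{j(e)})$ on $[0,1]\times C_{V(\Gamma)}(M)$ and then to argue that among all codimension-one faces $[0,1]\times F$ of this manifold with corners, only those corresponding to $F=F(V(\Gamma))$ survive. The variation $z_n(1)-z_n(0)$ arises from the two faces $\{0,1\}\times C_{V(\Gamma)}(M)$, which gives the left-hand side already written in the excerpt. It therefore remains to show that for every $(\Gamma,j)\in \Dc_n^c(\emptyset)$, the sum $\sum_{F\in (C_{V(\Gamma)}(M))_1} I(F,\Gamma,j)[\Gamma]$ collapses to the single term $I(F(V(\Gamma)),\Gamma,j)[\Gamma]$.

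First I would recall Lescop's classification of the codimension-one faces of $C_{V(\Gamma)}(M)$ (see \cite[Chapter 8]{lesbook}). Apart from the anomalous face $F(V(\Gamma))$, they fall into three types: (i) \emph{principal faces} $F(A)$ where a two-element subset $A\subset V(\Gamma)$ collapses at a point of $\check{M}$, (ii) \emph{hidden faces} $F(A)$ where a subset $A\subset V(\Gamma)$ with $|A|\geq 3$ but $A\neq V(\Gamma)$ collapses, and (iii) \emph{faces at infinity} where at least one vertex escapes to $\infty\in M$. For each type I would verify that the corresponding integral vanishes.

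The vanishing mechanisms are the standard ones. For hidden faces, one uses a dimension count on the fibers of the collapse map combined with the fact that the pull-back of $\omega_{j(e)}$ along an edge $e$ with both ends in $A$ factors through a lower-dimensional configuration space, so the integrand vanishes identically on the fiber. For faces at infinity, our forms $\omega_i$ restrict to $[0,1]\times ((\partial C_2(M))\setminus U\check{M})$ as pull-backs of $2$-forms on $S^2$ by the Gauss map, which is exactly the hypothesis of Lescop's result, and the standard escape-to-infinity argument using the behaviour of $\Xc$ on $[0,1]\times \check{V}_\infty$ yields vanishing. For principal faces, one uses that for an edge $e$ of $\Gamma$ joining the two colliding vertices, $p_e$ restricts to the face as a projection onto $U\check{M}|_m$ followed by a map to $\partial C_2(M)$ along which $\omega_{j(e)}$ is a pull-back through $p_{(\Xc,\nu_\Xc^p)}^*$ of a form on $S^2$. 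Summing over all labellings $j$ and packaging the result in $\Ac_n^c(\emptyset)$, the contributions cancel in pairs via the antisymmetry relation on pairs of edges incident at a vertex and via the Jacobi/IHX relation obtained from summing over the three ways an outgoing edge can be attached at a collapsed pair.

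Once these three types of faces are shown to contribute zero, the remaining term in the Stokes sum is exactly $I(F(V(\Gamma)),\Gamma,j)[\Gamma]$, proving the proposition. The main obstacle is the bookkeeping on principal faces: one must match the geometric cancellations with the algebraic relations defining $\Ac_n^c(\emptyset)$, paying attention to orientations of the collapsed configuration spaces. This is precisely the content of the general statement \cite[Proposition 9.2]{lesbook}, which applies here because our forms $\omega_i$ satisfy the hypothesis imposed there on $[0,1]\times ((\partial C_2(M))\setminus U\check{M})$; the additional constraint on $U\check{M}$ arising from the multisection $\nu_\Xc^p$ only strengthens the assumptions and does not affect the conclusion.
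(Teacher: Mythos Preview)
Your proposal is correct and follows the same approach as the paper: the paper does not give a proof at all, but simply notes that the statement is a direct corollary of \cite[Proposition~9.2]{lesbook}, since the forms $\omega_i$ satisfy the required behaviour on $[0,1]\times ((\partial C_2(M))\setminus U\check{M})$ and the additional constraints on $U\check{M}$ only strengthen the hypotheses. Your write-up is essentially an outline of how that proposition is proved, and your final paragraph makes the same observation the paper makes.
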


\begin{lema}\label{lem_anomalous}
 Let $(\Gamma,j)$ be in $\Dc_n^c(\emptyset)$. We have
 $$I(F(V(\Gamma)),\Gamma,j)[\Gamma]=0.$$
 
\end{lema}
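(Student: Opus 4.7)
The plan is to prove the lemma by establishing that the integrand $\bigwedge_{e\in E(\Gamma)} p_e^*(\omega_{j(e)})$ vanishes identically on $[0,1]\times F(V(\Gamma))$, from which the claim follows immediately. The key idea is a dimension count after a local factorisation.

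I would work locally. Over an embedded $3$-ball $B\subset \check{M}$, choose a lift $\nu_\Xc$ of $\nu_\Xc^p$ and complete $(\Xc,\nu_\Xc)$ to an orthonormal framing $(\nu_\Xc,w_\Xc,\Xc)$ of $\pi^*(TM)$ on $[0,1]\times B$. This ($t$-dependent) framing trivialises the fibration $[0,1]\times F(V(\Gamma))|_{[0,1]\times B}$ over $[0,1]\times B$ as $[0,1]\times B \times \check{S}_{V(\Gamma)}(\R^3)$. The central observation is that, in this trivialisation and for each edge $e\in E(\Gamma)$, the composition
$$\Phi_e = (\mbox{Id}_{[0,1]}\times p_{(\nu_\Xc,w_\Xc,\Xc)})\circ(\mbox{Id}_{[0,1]}\times p_e) \colon [0,1]\times B \times \check{S}_{V(\Gamma)}(\R^3) \to [0,1]\times S^2$$
takes the simple form $(t,m,c)\mapsto (t,G_e(c))$, where $G_e\colon \check{S}_{V(\Gamma)}(\R^3) \to S^2$ is the Gauss map of the edge $e$ on configurations in Euclidean space. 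Indeed $p_{(\nu_\Xc,w_\Xc,\Xc)}$ reads off the components of a unit tangent vector in the frame at $(t,m)$, and the trivialisation of $F(V(\Gamma))$ above uses exactly this frame, so the base dependence cancels. In particular $\Phi_e$ factors through the projection $p_{13}\colon [0,1]\times B \times \check{S}_{V(\Gamma)}(\R^3) \to [0,1]\times \check{S}_{V(\Gamma)}(\R^3)$ as $\Phi_e=(\mbox{Id}_{[0,1]}\times G_e)\circ p_{13}$.

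Using the defining property $\omega_{j(e)}|_{[0,1]\times \partial C_2(M)} = (\mbox{Id}_{[0,1]}\times p_{(\Xc,\nu_\Xc^p)})^*(\Tilde{\omega}_{j(e),S^2})$, the integrand in this local trivialisation equals
$$\bigwedge_{e\in E(\Gamma)} p_e^*(\omega_{j(e)}) \;=\; p_{13}^*\Bigl(\bigwedge_{e\in E(\Gamma)} (\mbox{Id}_{[0,1]}\times G_e)^*(\Tilde{\omega}_{j(e),S^2})\Bigr).$$
The form inside $p_{13}^*$ lives on $[0,1]\times \check{S}_{V(\Gamma)}(\R^3)$, of dimension $1+(6n-4)=6n-3$, while it has total degree $2\cdot 3n=6n$. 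A form of degree strictly greater than the dimension vanishes identically, and so does its pullback. The $\U_p$-invariance of $\Tilde{\omega}_{j(e),S^2}$ ensures that this local vanishing statement is independent of the chosen lift, hence the integrand vanishes on $[0,1]\times F(V(\Gamma))$, and $I(F(V(\Gamma)),\Gamma,j)[\Gamma]=0$.

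The main obstacle is verifying the local identity $\Phi_e(t,m,c)=(t,G_e(c))$ in the time-dependent setting; this requires carefully unpacking the definitions of $p_e$ on the anomalous face, of the frame projection $p_{(\nu_\Xc,w_\Xc,\Xc)}$, and of the trivialisation of $F(V(\Gamma))$ induced by a $t$-dependent orthonormal framing of $\pi^*(TM)$. Once this compatibility is in place, the remaining dimension count is immediate and the local nature of the argument causes no issue thanks to $\U_p$-invariance.
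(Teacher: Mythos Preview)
Your proof is correct and follows essentially the same route as the paper: local trivialisation of the anomalous face via a lift of the multisection, factorisation of each edge map through the projection onto $[0,1]\times \check{S}_{V(\Gamma)}(\R^3)$, and the dimension count $6n>6n-3$. The paper presents the same argument slightly more compactly (writing your $G_e$ as $p_e'=G_{S^3}\circ p_e$), but the strategy and the key step are identical.
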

\begin{proof}
We will prove that the form $\bigwedge_{e\in E(\Gamma)} (\mbox{Id}_{[0,1]} \times p_e)^*(\omega_{j(e)})$ is $0$ on $[0,1]\times F(V(\Gamma))$.
 Let $B$ be
 an embedded 
 $3$-ball in $\check{M}$ and let $\nu_\Xc$ be a lift of $\nu^p_\Xc$ on $[0,1]\times B$. Complete $(\Xc,\nu_\Xc)$ to an orthonormal frame $(\Xc,\nu_\Xc,w_\Xc)$ of $\pi^*(TM)$ over $[0,1] \times B$. This induces a diffeomorphism $\psi_{(\nu_\Xc,w_\Xc,\Xc)}$ from $F(V(\Gamma))|_{B}$ to $B\times \check{S}_{V(\Gamma)}(\R^3)$. Recall the projection $p_{(\nu_\Xc,w_\Xc,\Xc)}\colon [0,1]\times U\check{M}|_{B} \to S^2$. On $F(V(\Gamma))|_{B}$ we have
 $$\bigwedge_{e\in E(\Gamma)}  (\mbox{Id}_{[0,1]} \times p_e)^*(\omega_{j(e)}) = \bigwedge_{e\in E(\Gamma)}  (p_{(\nu_\Xc,w_\Xc,\Xc)} \circ (\mbox{Id}_{[0,1]} \times p_e))^*(\Tilde{\omega}_{S^2,j(e)}).$$
  Define $p_e'\colon \check{S}_{V(\Gamma)}(\R^3) \to S^2$ to be $G_{S^3}\circ p_e$. On $F(V(\Gamma))|_{B}$ we have
 $$p_{(\nu_\Xc,w_\Xc,\Xc)} \circ (\mbox{Id}_{[0,1]} \times p_e) =\mbox{Id}_{[0,1]} \times ( p_e' \circ p_{\check{S}_{V(\Gamma)}(\R^3)} \circ \psi_{(\nu_\Xc,w_\Xc,\Xc)}),$$
 where $p_{\check{S}_{V(\Gamma)}(\R^3)}\colon B\times \check{S}_{V(\Gamma)}(\R^3) \to  \check{S}_{V(\Gamma)}(\R^3)$ is the projection on the factor $\check{S}_{V(\Gamma)}(\R^3)$. Thus on $F(V(\Gamma))|_{B}$ we have
 \begin{multline*}
 \bigwedge_{e\in E(\Gamma)}  (\mbox{Id}_{[0,1]} \times p_e)^*(\omega_{j(e)})\\ = (\mbox{Id}_{[0,1]} \times (p_{\check{S}_{V(\Gamma)}(\R^3)} \circ \psi_{(\nu_\Xc,w_\Xc,\Xc)}))^*\Bigl(\bigwedge_{e\in E(\Gamma)}  (\mbox{Id}_{[0,1]} \times p_e')^*(\omega_{S^2,j(e)})\Bigr).
 \end{multline*}
 The form $\bigwedge_{e\in E(\Gamma)}  (\mbox{Id}_{[0,1]} \times p_e')^*(\omega_{S^2,j(e)})$ is of degree $2|E(\Gamma)|$. It is defined on $[0,1]\times \check{S}_{V(\Gamma)}(\R^3)$, which is of dimension $2|E(\Gamma)|-3$. Thus it vanishes.
\end{proof}

\begin{proof}[Proof of Theorem~\ref{thm_inv_combing}]
Let $[X]$ be a combing on $\check{M}$. Let $k\geq 1$ be an integer. Proposition~\ref{prop_les} and Lemma~\ref{lem_anomalous} show the independence of $z_n$ with respect to the choices of the Riemannian metric $g$, of the particular representative $X$ of $[X]$, of the $k\order(X)$-multisection $\nu^{k\order(X)}_{X}$ of $UX^\perp$, and of the family $(\omega_i)_{i\in\{1,\dots,3n\}}$ of propagating forms of $(C_2(M),g,X,\nu^{k\order(X)}_{X})$. Let $z_n(M,X,k)$ denote this quantity. We prove $z_n(M,X,k)=z_n(M,X,1)$.

Consider the quotient map $\pi_k\colon UX_{\order(X)}^\perp \to UX_{k \order(X)}^\perp$. An $\order(X)$-multisection $\nu_X^{\order(X)}$ of $UX^\perp$ induces a $k\order(X)$-multisection $\pi_k \circ \nu_X^{\order(X)}$ of $UX^\perp$. Any local lift of 
$\nu_X^{\order(X)}$ is a local lift of $\pi_k \circ \nu_X^{\order(X)}$.
This implies that a propagating form of $(C_2(M),g,X, \pi_k \circ \nu_X^{\order(X)})$ is also a propagating form of $(C_2(M),g,X, \nu_X^{\order(X)})$. We obtain $z_n(M,X,k)=z_n(M,X,1)$.
\end{proof}

\section{The variation formula}
\label{sec_proof_var}

 Let $[X]$ be a fixed combing on $\check{M}$. Let $g$ be a fixed Riemannian metric on $\check{M}$. Let $p=\order(X)$. Recall the knot $K$ from Lemma~\ref{lem_exist_knot}. In this section, we prove Theorem~\ref{thm_inv_var}. Our strategy is the following. In \cite[Chapter 19]{lesbook}, Lescop introduced a way of correcting the boundary of propagators using generalised parallelizations called pseudo-parallelizations. She defined an invariant $z(M,\Tilde{\tau})$ of a rational homology sphere $M$ equipped with a pseudo-parallelization $\Tilde{\tau}$. She defined an invariant $p_1$ of pseudo-parallelizations. She proved that $z(M,\Tilde{\tau})=z(M)+1/4 p_1(\Tilde{\tau})\beta$. In Section~\ref{sec_pseudo_par}, we review the definition of pseudo-parallelizations and the above-mentionned results. In Section~\ref{sec_pseudo_par_combing} we construct an appropriate pseudo-parallelization $\Tilde{\tau}$ on $\check{M}$ in the setting described in Section~\ref{sec_concrete_multisection}. In Section~\ref{sec_proof_var_special}, we prove Theorem~\ref{thm_inv_var} when $\slk(K)=0$. In Section~\ref{sec_conclude}, we conclude the proof of Theorem~\ref{thm_inv_var}.

 \subsection{Review of pseudo-parallelizations}
 \label{sec_pseudo_par}
 
 In this section, we introduce pseudo-parallelizations of $\check{M}$ and homogeneous propagating forms of $C_2(M)$ associated with pseudo-parallelizations. In Theorem~\ref{thm_pseudo_par}, we review Lescop's definition of an invariant of rational homology spheres equipped with a pseudo-parallelization. We state the relation between this invariant and $z$. We refer to \cite[Chapter 19]{lesbook} for more general statements and for proofs.
 
  Let $N(\partial [-1,1])=[-1,1]\setminus [-1+\epsilon,1-\epsilon]$. 
  We assume that the map $\kiS{-1}{1}$ of Definition~\ref{def_twist} is constant on each connected component of $N(\partial [-1,1])$.
  Let $N(\partial [a,b])$ be defined in a similar way. The coordinates on $[a,b]\times [-1,1]$ are denoted by $(x,y)$.
  
 \begin{defia}
 A pseudo-parallelization $(N(\gamma);\tau_e,\tau_b)$ of $\check{M}$ consists of
 \begin{itemize}
  \item a framed link $\gamma$ equipped with a neighborhood $N(\gamma)=[a,b]\times \gamma \times [-1,1] \subset M\setminus V_\infty$
  \item a parallelization $\tau_e$ of $\check{M}$ outside $N(\gamma)$
  \item a parallelization $\tau_b$ of $N(\gamma)$ such that $\tau_b=\tau_e$ on $([a,b]\times \gamma \times N(\partial [-1,1])) \cup ([b-\epsilon,b] \times \gamma \times [-1,1])$ and $\tau_b=\tau_e \circ T_\gamma$ on $[a,a+\epsilon]\times \gamma\times [-1,1]$, where $T_\gamma$ is defined on $N(\gamma) \times \R^3$ by
  $$T_\gamma (x,c,y,v)=(x,c,y,R_{2\pi\kiS{-1}{1}(y)}(v)).$$
 \end{itemize}
\end{defia}
In this article, we only consider pseudo-parallelizations $(N(\gamma);\tau_e,\tau_b)$ of $\check{M}$ such that $\tau_e$ is the standard parallelization of $\R^3$ over $V_\infty \setminus \{\infty\}$.

Let $F\colon [a,b]\times [-1,1] \to SO(3)$ be a smooth map such that
$$F(x,y)=
\left\{
\begin{array}{lcl}
 1& \mbox{ if }& |y|>1-\epsilon \\
 R_{2\pi\kiS{-1}{1}(y)} &\mbox{ if }& x<a+\epsilon \\
 R_{2\pi\kiS{-1}{1}(y)}^{-1} &\mbox{ if }& x>b-\epsilon
\end{array}
\right..$$
Such a map $F$ exists because its restriction to the boundary of $[a,b]\times [-1,1]$ is twice an element of $\pi_1(SO(3))=\Z/2\Z$. Later we will use the following lemma. It is equivalent to Lemma 21.13 of $\cite{lesbook}$.

\begin{lema}\label{lem_degree_mone}
 Let $f \colon [a,b]\times [-1,1] \to S^2$ be defined
 by $f(x,y)=F(x,y)((0,0,1))$. Then the degree of the induced map $f$ from $[a,b]\times [-1,1] /\partial ([a,b]\times [-1,1])$ to $S^2$ is $1$.
\end{lema}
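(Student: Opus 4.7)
The plan is threefold. First, I would observe that $f$ is constant equal to $(0,0,1)$ on $\partial([a,b]\times [-1,1])$: on the top and bottom edges one has $F=1$, and on the left and right edges $F$ is a rotation about the $z$-axis, which fixes $(0,0,1)$. Therefore $f$ descends to a map $\bar f : S^2 \to S^2$, whose degree is well-defined and is what we must compute.

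Second, I would show that this degree is independent of the chosen $F$. If $F_0, F_1$ are two such maps, then $F_1 F_0^{-1}$ is equal to the identity of $SO(3)$ on the full boundary, and therefore factors through $S^2 = D^2/\partial D^2 \to SO(3)$. Since $SO(3) = \mathbb{R}P^3$ we have $\pi_2(SO(3)) = \pi_2(S^3) = 0$, so this induced map is null-homotopic; transporting the null-homotopy by right-multiplication with $F_0$ yields a homotopy from $F_1$ to $F_0$ relative to the boundary, and hence a homotopy rel boundary from $f_1$ to $f_0$. So the degrees agree.

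Third, I would exhibit one convenient $F$ for which the degree can be computed by hand. I propose
$$F(x,y) = R^x_{\alpha(x)}\, R_{2\pi\chi_{\interp}^{[-1,1]}(y)}\, R^x_{-\alpha(x)},$$
where $R^x_\alpha$ denotes the rotation about the $x$-axis by angle $\alpha$, and $\alpha : [a,b] \to [0,\pi]$ is a smooth, increasing function equal to $0$ near $a$ and to $\pi$ near $b$. Since $R_0 = R_{2\pi} = 1$ in $SO(3)$, and since $R^x_\pi R_\theta R^x_{-\pi} = R_{-\theta}$, the four boundary conditions are immediate. The induced $f(x,y)$ then rotates $(0,0,1)$ by angle $2\pi\chi_{\interp}^{[-1,1]}(y)$ around the axis $(0,-\sin\alpha(x),\cos\alpha(x))$, a family of axes sweeping a great circle from $(0,0,1)$ to $(0,0,-1)$.

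The last step, which I expect to be the main obstacle, is to carefully verify that the local degree is $+1$ rather than $-1$. I would solve $f(x,y)=(0,-1,0)$: the unique interior solution is $\alpha = \pi/4$, $\theta = \pi$, at which a direct Jacobian computation gives $\partial_\alpha f = (0,0,-2)$ and $\partial_\theta f = (1/\sqrt 2, 0, 0)$; their cross product is a positive multiple of the outward normal $(0,-1,0)$, so the local degree is $+1$ with the standard orientation conventions on $S^2$. Together with the regularity of $(0,-1,0)$ and the uniqueness of the preimage, this yields $\deg(\bar f) = 1$, proving the lemma.
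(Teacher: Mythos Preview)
The paper does not prove this lemma: it simply records that the statement is equivalent to Lemma~21.13 of \cite{lesbook} and defers to that reference. Your proposal, by contrast, gives a complete self-contained argument, and it is correct.

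Your three-step strategy is clean: (1) the boundary behaviour of $F$ forces $f\equiv(0,0,1)$ on $\partial([a,b]\times[-1,1])$; (2) any two admissible $F$ differ by a map $[a,b]\times[-1,1]/\partial\to SO(3)$, and $\pi_2(SO(3))=0$ makes them homotopic rel boundary, so the degree is independent of $F$; (3) the explicit choice $F(x,y)=R^x_{\alpha(x)}R_{2\pi\chi(y)}R^x_{-\alpha(x)}$ satisfies all four boundary conditions (in particular $R^x_\pi R_\theta R^x_{-\pi}=R_{-\theta}$ gives the right-edge condition), and the preimage of $(0,-1,0)$ is the single interior point $(\alpha,\theta)=(\pi/4,\pi)$ with local degree $+1$. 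I checked your Jacobian: with $f=(-\sin\alpha\sin\theta,\ \sin\alpha\cos\alpha(\cos\theta-1),\ \sin^2\alpha\cos\theta+\cos^2\alpha)$ one indeed gets $\partial_\alpha f=(0,0,-2)$ and $\partial_\theta f=(1/\sqrt2,0,0)$ at that point, and their cross product is a positive multiple of the outward normal $(0,-1,0)$. This sign is consistent with how the lemma is used later in the paper (in the proof of Lemma~\ref{lem_forme_bord}, where $\int_{[a,b]\times[-1,1]}f^*(\omega_{S^2})=1$ is needed).

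One small technical remark: your local-degree computation implicitly uses $\alpha'(x_0)>0$ and $(\chiS^{[-1,1]})'(y_0)>0$ at the preimage point, so that $(\partial_x f,\partial_y f)$ is a \emph{positive} rescaling of $(\partial_\alpha f,\partial_\theta f)$. Since $\alpha$ is yours to choose this is free; for $\chiS^{[-1,1]}$ you may either assume it has been chosen with positive derivative on the open interior (the paper leaves this freedom), or simply invoke Sard's theorem and compute at a nearby regular value. Either way the argument goes through.
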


\begin{defia} 
Let $\Tilde{\tau}=(N(\gamma);\tau_e,\tau_b)$ be a pseudo-parallelization of $\check{M}$. Let $\omega_{S^2}$ denote the standard $2$-form of volume one on $S^2$. Let $F_\gamma\colon N(\gamma) \times \R^3 \to N(\gamma) \times \R^3$ be defined
by
$$F_\gamma (x,c,y,v)=(x,c,y,F(x,y)(v)).$$
We define a closed $2$-form $\omega(\gamma,\tau_b)$ on $N(\gamma)$ as
 $$\omega(\gamma,\tau_b)=\frac{p_{\tau_b \circ T_{\gamma}^{-1}}^*(\omega_{S^2}) + p_{\tau_b \circ F_{\gamma}^{-1}}^*(\omega_{S^2})}{2}$$
 The \emph{homogeneous boundary form} associated to $(\Tilde{\tau},F)$ is the closed $2$-form $\omega(\Tilde{\tau},F)$ defined on $U\check{M}$ as
 $$\omega(\Tilde{\tau},F)=
 \left\{
 \begin{array}{lcl}
  p_{\tau_e}^*(\omega_{S^2}) &\mbox{ on }& U\check{M}|_{ \check{M} \setminus N(\gamma)} \\
  \omega(\gamma,\tau_b) &\mbox{ on }& U\check{M}|_{ N(\gamma)}
 \end{array}
\right..
 $$
 \end{defia}
 
 \begin{defia}
 Let $\Tilde{\tau}$ be a pseudo-parallelization of $\check{M}$. A \emph{homogeneous propagating form} of $(C_2(M),\Tilde{\tau})$ is a propagating form of $C_2(M)$ that restricts as the homogeneous boundary form associated to $(\Tilde{\tau},F)$  on $U\check{M}$ for some map $F$ as above.
 \end{defia}
 
 \begin{thma}\label{thm_pseudo_par}
  Let $\Tilde{\tau}$ be a pseudo-parallelization of $\check{M}$. Let $\omega$ be a homogeneous propagating form of $(C_2(M),\Tilde{\tau})$. The element
  $$z_n(M,\Tilde{\tau},\omega)=\xi_n \sum_{(\Gamma,j)\in \Dc_n^c(\emptyset)} \int_{C_{V(\Gamma)}(M)} \bigwedge_{e\in E(\Gamma)} p_e^*(\omega) [\Gamma]$$
  of $\A_n^c(\emptyset)$ does not depend on the choice of $\omega$. It is denoted by $z_n(M,\Tilde{\tau})$. There exists a $\Q$-valued map $p_1$ on the 
  set of pseudo-parallelizations of $\check{M}$ such that
  $$z_n(M,\Tilde{\tau})=z_n(M)+\frac{1}{4}p_1(\Tilde{\tau})\beta_n.$$
 \end{thma}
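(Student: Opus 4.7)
The plan is to adapt the independence/variation strategy of Section~\ref{sec_proof_inv} to pseudo-parallelizations, following Lescop's treatment in \cite[Chapter 19]{lesbook}.

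First, I would establish independence of $z_n(M,\Tilde{\tau},\omega)$ on the choice of homogeneous propagating form $\omega$. Given two such forms $\omega_0,\omega_1$ with underlying auxiliary maps $F_0,F_1\colon [a,b]\times[-1,1]\to SO(3)$, the space of admissible $F$ is path-connected: both satisfy the prescribed boundary condition and Lemma~\ref{lem_degree_mone} fixes the relevant degree. Interpolating produces a closed family of homogeneous boundary forms on $[0,1]\times U\check{M}$, and together with the standard pull-back on $[0,1]\times U\check{M}|_{\check{M}\setminus N(\gamma)}$ this defines a closed $2$-form on $[0,1]\times \partial C_2(M)$. Since $H^3(C_2(M),\partial C_2(M);\R)=0$, this extends to a closed $2$-form $\Omega$ on $[0,1]\times C_2(M)$ restricting to $\omega_j$ at the ends. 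Applying Stokes' theorem to $\bigwedge_{e\in E(\Gamma)} p_e^*(\Omega)$ on $[0,1]\times C_{V(\Gamma)}(M)$, as in Proposition~\ref{prop_les}, localizes the variation on codimension-one faces. The anomalous face $F(V(\Gamma))$ contributes zero by the dimensional argument of Lemma~\ref{lem_anomalous}, and the remaining diagonal faces cancel by the Jacobi and antisymmetry relations.

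For the existence of the $\Q$-valued invariant $p_1$ on pseudo-parallelizations, the definition would extend Lescop's characteristic-class construction of $p_1(\tau)$ from \cite[Chapter 5]{lesbook} by accounting for the local twist along $N(\gamma)$ via the auxiliary map $F$; rationality arises because the boundary data for $F$ represents twice a class in $\pi_1(SO(3))=\Z/2\Z$, yielding a factor of $1/2$. To prove the variation formula, I would interpolate $\Tilde{\tau}$ with a genuine parallelization $\tau$ of $\check{M}$, which exists because $M$ is parallelizable. The variation of $z_n$ along such an interpolation is computed again via Stokes' theorem, and the non-vanishing contribution comes from the anomalous face $F(V(\Gamma))$, where the interpolating family of framings is genuinely non-constant on fibers. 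By definition of the $\beta$-anomaly, this contribution is a scalar multiple of $\beta_n$, and identifying the coefficient as $\frac{1}{4}p_1(\Tilde{\tau})$ reduces to matching two characteristic-class computations.

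The hard part will be the explicit treatment of the local average $\omega(\gamma,\tau_b)=\frac{1}{2}\bigl(p_{\tau_b\circ T_\gamma^{-1}}^*(\omega_{S^2})+p_{\tau_b\circ F_\gamma^{-1}}^*(\omega_{S^2})\bigr)$ on $U\check{M}|_{N(\gamma)}$. Tracking how this averaging interacts with the anomalous face contribution, and produces exactly the factor $\frac{1}{4}p_1(\Tilde{\tau})$, requires a careful Chern--Simons-type comparison of the two parallelizations $\tau_b\circ T_\gamma^{-1}$ and $\tau_b\circ F_\gamma^{-1}$ over $[a,b]\times[-1,1]$, exploiting Lemma~\ref{lem_degree_mone}. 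This is precisely the content of \cite[Chapter 19]{lesbook}, to which I would defer for the full technical details.
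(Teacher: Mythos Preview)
The paper does not give its own proof of this theorem; it is stated in Section~\ref{sec_pseudo_par} purely as a review of Lescop's results, with the sentence ``We refer to \cite[Chapter 19]{lesbook} for more general statements and for proofs.'' Your sketch is a reasonable outline of the strategy carried out there, and you correctly defer the delicate computations (the averaging on $UN(\gamma)$ and the identification of the anomalous coefficient with $\tfrac14 p_1(\Tilde\tau)$) to that reference.

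One point in your independence argument deserves correction. You write that the anomalous face contributes zero ``by the dimensional argument of Lemma~\ref{lem_anomalous}''. That lemma relies on the fact that, for a multisected combing, the boundary form over a ball $B$ is the pull-back of a form on $S^2$ via a single local projection, so the integrand on $[0,1]\times F(V(\Gamma))|_B$ factors through $[0,1]\times \check S_{V(\Gamma)}(\R^3)$. For a pseudo-parallelization this is false over $N(\gamma)$: the homogeneous boundary form $\omega(\gamma,\tau_b)$ involves $T_\gamma$ and $F_\gamma$, which depend on the transverse coordinates $(x,y)\in[a,b]\times[-1,1]$. What saves you is that neither depends on the $\gamma$-coordinate $c$, so after trivialising via $\tau_b$ the integrand on $[0,1]\times F(V(\Gamma))|_{N(\gamma)}$ factors through the projection to $[0,1]\times[a,b]\times[-1,1]\times \check S_{V(\Gamma)}(\R^3)$, a space of dimension $2|E(\Gamma)|-1$, still one below the degree $2|E(\Gamma)|$ of the form. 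This is exactly the mechanism used later in the paper in the proof of Proposition~\ref{prop_pseudo_combing} (the $\slk(K)=0$ case), and it is the correct replacement for Lemma~\ref{lem_anomalous} here.
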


 Lescop used the more flexible definition of $Z$ with pseudo-parallelizations to prove the universality of $Z$ among finite type invariants of rational homology spheres. See \cite[Part IV]{lesbook}.

 \subsection{A pseudo-parallelization associated with a combing}
 \label{sec_pseudo_par_combing}
 
 Recall our construction from Section~\ref{sec_concrete_multisection}. In particular, recall the notation $K,N(K),E_K$, the section $Y_e$ of $X^\perp$ over $E_K$, and the parallelization $\tau_e$ of $E_K$ from Lemma~\ref{lem_exist_knot}. Also recall that the third vector of $\tau_e$ is $X$.

In this section, we define a parallelization $\tau_N$ of $N(K)$ such that
the third vector of $\tau_N$ is $X$, and $\Tilde{\tau}=(N(K);\tau_e,\tau_{N})$ is a pseudo-parallelization. Thus, we define a pseudo-parallelization $\Tilde{\tau}$ of $\check{M}$ that extends $\tau_e$. We distinguish two cases.
 
When $\slk(K)\neq 0$, we use the following lemma as a definition for $\tau_N$.
 \begin{lema} 
 There exists a parallelization $\tau_{N}$ of $N(K)$ such that $(N(K);\tau_e,\tau_{N})$ is a pseudo-parallelization of $\check{M}$ and $X$ is the third vector of $\tau_N$.
 \end{lema}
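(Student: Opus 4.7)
The plan is to reduce the lemma to an obstruction computation for extending a section of $UX^\perp$ over the solid torus $N(K)$, and then to apply Lemma~\ref{lem_deg_ext}.

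Since the twist $T_K$ acts by rotations about the third coordinate axis, the requirement that $X$ be the third vector of $\tau_N$ is compatible with the gluing conditions $\tau_N = \tau_e$ on the outer collar and $\tau_N = \tau_e \circ T_K$ on $[a, a+\epsilon]\times K\times [-1,1]$. First I would observe that, with $X$ fixed as the third vector, finding $\tau_N$ is equivalent to finding a section $Y_N$ of $UX^\perp$ over $N(K)$ which equals $Y_e$ on $([a,b]\times K\times N(\partial[-1,1])) \cup ([b-\epsilon, b]\times K \times [-1,1])$ and equals $R_{2\pi\kiS{-1}{1}(y)}(Y_e)$ on $[a, a+\epsilon]\times K\times [-1,1]$; the second vector is then automatically determined by orthonormality and orientation, and the resulting framing matches $\tau_e$, resp.\ $\tau_e\circ T_K$, wherever it is prescribed.

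Next I would apply Lemma~\ref{lem_deg_ext} to the solid torus $N(K)$. The boundary data above defines a section $s$ of $UX^\perp$ on a neighbourhood of $\partial N(K)$, which extends over $N(K)$ provided $d_{m_K}(\sigma, s) = 0$ for any section $\sigma$ of $UX^\perp$ over the meridian disc $D_m$. Using Lemma~\ref{lem_deg_sum}, I would split this degree as
$$d_{m_K}(\sigma, s) = d_{m_K}(\sigma, Y_e) + d_{m_K}(Y_e, s).$$
The first summand equals $-1$ by the observation stated just before Lemma~\ref{lem_concrete_multi}, which itself rests on Lemma~\ref{lem_deg_mer} together with the fact (Lemma~\ref{lem_exist_knot}) that $[K]$ is Poincar\'e dual to $-e(UX^\perp)$. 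For the second summand, the ratio $s/Y_e$ (viewed as a map $m_K\to S^1$ via the trivialisation induced by $Y_e$) is trivial on three of the four sides of $\partial D_m$, and equals $R_{2\pi\kiS{-1}{1}(y)}$ on the remaining left side; as $y$ runs from $-1$ to $1$ along the chosen orientation of $m_K$, this winds once, contributing $+1$. Hence $d_{m_K}(\sigma, s) = 0$, and Lemma~\ref{lem_deg_ext} produces the required extension $Y_N$.

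Completing $(Y_N, X)$ to an orthonormal oriented framing $(Y_N, W_N, X)$ defines $\tau_N$, and the matching conditions with $\tau_e$ and $\tau_e\circ T_K$ on the collar of $\partial N(K)$ are automatic from the construction of $Y_N$. The main subtlety is sign tracking: the $+1$ contribution of the twist must cancel the $-1$ coming from the Euler class, and this is precisely why in Lemma~\ref{lem_exist_knot} the class $[K]$ was chosen to be dual to $-e(UX^\perp)$ rather than to $+e(UX^\perp)$. Note that this argument does not use the hypothesis $\slk(K)\neq 0$; the case distinction only affects the subsequent construction of a compatible multisection in Section~\ref{sec_concrete_multisection}.
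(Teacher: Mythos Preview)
Your proposal is correct and follows essentially the same approach as the paper: both define the prescribed section of $UX^\perp$ on the collar $E_K\cap N(K)$ from the pseudo-parallelization boundary conditions, compute the relative degree along $m_K$ as $(-1)+1=0$ via Lemmas~\ref{lem_deg_sum} and~\ref{lem_deg_ext}, and then complete to an orthonormal frame with third vector $X$. Your additional remarks on the sign convention for $[K]$ and on the irrelevance of the hypothesis $\slk(K)\neq 0$ are accurate observations not made explicit in the paper.
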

 \begin{proof}
  Consider the section $Y$ of $UX^\perp$ defined over $E_K\cap N(K)$ as 
  $$Y=
  \left\{
  \begin{array}{lcl}
   Y_e & \mbox{ if } & x\in [b-\epsilon,b] \mbox{ or } y\in N(\partial [-1,1])) \\
   R_{2\pi\kiS{-1}{1}(y)}\circ Y_e & \mbox{ if } & x\in [a,a+\epsilon]
  \end{array}
\right..
  $$
  Let $s$ be a section of $UX^\perp$ over $D_m$. We have $d_{m_K}(s,Y_e)=-1$ and $d_{m_K}(Y_e,Y)=1$. Lemma~\ref{lem_deg_sum} then implies $d_{m_K}(s,Y)=0$. Lemma~\ref{lem_deg_ext} shows that $Y$ extends as a section of $UX^\perp$ over $N(K)$. Complete $(X,Y)$ to an orthonormal frame $(X,Y,W)$ of $UN(K)$. Let $\tau_N$ be the parallelization of $N(K)$ associated to $(Y,W,X)$. One easily check that $(N(K);\tau_e,\tau_{N})$ is a pseudo-parallelization of $\check{M}$.
 \end{proof}

 When $\slk(K)= 0$, we use the following lemma as a definition for $\tau_N$, with the parallelization $\tau_N^{\mbox{\scriptsize multi}}$ of $N(K)$ defined in Lemma~\ref{lem_para_int}.

 \begin{lema} 
Assume that $\slk(K)=0$. There exists a map $g$ from $[a,b]\times [-1,1] \to SO(2)\subset SO(3)$ of rotations of $\R^3$ around $(0,0,1)$ such that the triplet $(N(K);\tau_e,\tau_N^{\mbox{\scriptsize multi}} \circ g_K)$ is a pseudo-parallelization of $\check{M}$, where $g_K$ is defined on $N(K)$ as $g_K(x,c,y)=g(x,y)$. Let $\tau_N=\tau_N^{\mbox{\scriptsize multi}} \circ g_K$.
\end{lema}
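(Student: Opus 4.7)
The plan is to use that $\tau_N^{\mbox{\scriptsize multi}}$ and $\tau_e$ both have $X$ as their third vector, so that their transition $\phi=(\tau_N^{\mbox{\scriptsize multi}})^{-1}\circ\tau_e$ automatically takes values in the subgroup $SO(2)\subset SO(3)$ of rotations about $(0,0,1)$. From the explicit formulas giving $\Tilde{Y}_N$ on the components $C_k$ and on the rectangles $[a,a+\epsilon]\times S^1\times I_k$, this transition depends only on $(x,y)$ and not on the $S^1$-coordinate $c$, so $\phi$ descends to a map $\phi\colon (N(K)\cap E_K)\to SO(2)$ defined on the two-dimensional ``frame'' $\bigl([a,b]\times [-1,1]\bigr)\setminus\bigl(\,]a+\epsilon,b-\epsilon[\,\times\,]-1+\epsilon,1-\epsilon[\,\bigr)$.

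The pseudo-parallelization conditions then force $g$ to satisfy $g=\phi$ on the two-dimensional projection of the first region (top, bottom and right strips) and $g(x,y)=\phi(x,y)\cdot R_{2\pi\kiS{-1}{1}(y)}$ on the two-dimensional projection of the second region (the left strip $[a,a+\epsilon]\times[-1,1]$). I would first define $g$ on the frame by these two formulas and check compatibility on their overlap, namely the four corner pieces $[a,a+\epsilon]\times N(\partial[-1,1])$: these lie inside $C_1$, so $\phi=\mbox{Id}$ there, while $\kiS{-1}{1}(y)\in\{0,1\}$ forces $R_{2\pi\kiS{-1}{1}(y)}=\mbox{Id}$, so the two formulas agree.

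What remains is to extend $g$ smoothly over the hole $\,]a+\epsilon,b-\epsilon[\,\times\,]-1+\epsilon,1-\epsilon[\,$. Such a smooth extension into $SO(2)\cong S^1$ exists if and only if the restriction of $g$ to the boundary of the hole has winding zero in $\pi_1(SO(2))=\Z$. Three of the four sides of this boundary carry $g\equiv\mbox{Id}$, so the entire winding comes from the left side $x=a+\epsilon$. As $y$ increases across $[-1+\epsilon,1-\epsilon]$, the angle of $\phi$ accumulates a total change of $-2\pi$ — each successive crossing of a segment $I_k$ contributes $-2\pi/p$, and there are exactly $p$ crossings by the factorized form $N(\Sigma)\cap N(K)=[a,a+\epsilon]\times S^1\times\bigcup_k I_k$ provided by Lemma~\ref{lem_exist_surface_bis} — while the factor $R_{2\pi\kiS{-1}{1}(y)}$ contributes $+2\pi$. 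The two contributions cancel, so $g$ extends smoothly over the hole by a standard lift--extend--project argument.

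The main obstacle is verifying this exact cancellation of windings: it is precisely the place where the hypothesis $\slk(K)=0$ combines with the specific $p$-fold twisting used in the construction of $\Tilde{Y}_N$. The homological condition $\slk(K)=0$ provides (via Lemma~\ref{lem_exist_surface_bis}) the exact count of $p$ sheets of $\Sigma$ in $N(K)$, each inducing a twist of $2\pi/p$ in $\phi$; these fit together to produce precisely the $-2\pi$ needed to absorb the $+2\pi$ twist imposed by $T_\gamma$ in the pseudo-parallelization condition. Once $g$ is constructed, the triplet $(N(K);\tau_e,\tau_N^{\mbox{\scriptsize multi}}\circ g_K)$ satisfies the pseudo-parallelization conditions by construction, and $X$ remains the third vector of $\tau_N^{\mbox{\scriptsize multi}}\circ g_K$ since $g$ takes values in $SO(2)$.
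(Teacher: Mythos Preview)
Your proposal is correct and follows the same approach as the paper. The paper's proof is more terse: it records that on $[a,a+\epsilon]\times S^1\times[-1,1]$ one has $\tau_N^{\mbox{\scriptsize multi}}=\tau_e\circ R$ for a map $R\colon[-1,1]\to SO(2)$ (your $\phi^{-1}$ on the left strip), asserts that $R$ is homotopic rel $N(\partial[-1,1])$ to $y\mapsto R_{2\pi\kiS{-1}{1}(y)}$, and then writes down $g$ directly with $g=R^{-1}\circ R_{2\pi\kiS{-1}{1}(y)}$ on the left strip and $g=1$ elsewhere. Your explicit winding-number count (the $p$ sheets each contributing $2\pi/p$, cancelled by the $+2\pi$ from $T_\gamma$) is exactly the content of that homotopy assertion, spelled out.
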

\begin{proof}
As a consequence of our definition of $\tau_N^{\mbox{\scriptsize multi}}$ in Lemma~\ref{lem_para_int}, there exists a map $R \colon [-1,1] \to SO(2)\subset SO(3)$ of rotations around $(0,0,1)$ such that
$$\tau_N^{\mbox{\scriptsize multi}} (x,c,y,v) = \tau_e (x,c,y,R(y)(v))$$
for $(x,c,y,v)$ in $[a,a+\epsilon ]\times S^1 \times [-1,1] \times \R^3$. Moreover, the map $R$ is homotopic relatively to $N(\partial [-1,1])$ to $y\mapsto R_{2\pi\kiS{-1}{1}(y)}$.
We choose a map $g$ from $[a,b]\times [-1,1]$ to $SO(2)\subset SO(3)$ such that
$$g=
\left\{
\begin{array}{lcl}
 R^{-1} \circ R_{2\pi\kiS{-1}{1}(y)} & \mbox{ on } & [a,a+\epsilon] \times [-1,1] \\
 1 & \mbox{ on } & [a+2\epsilon,b] \times [-1,1] \\
 1  & \mbox{ on } & [a,b] \times N(\partial [-1,1])
 \end{array}
\right..
$$
The result follows.
\end{proof}

\subsection{Proof of the variation formula in a special case}
\label{sec_proof_var_special}

Recall the $p$-multisection $\Tilde{Y}^p$ of $UX^\perp$ from Lemma~\ref{lem_concrete_multi}. Recall that $u\in[-1,1]$ denotes the normal coordinate to $\Sigma$. In the following lemma, we compare the homogeneous boundary form $\omega(\Tilde{\tau},F)$ and $p_{(X,\Tilde{Y}^p)}^*(\omega_{S^2})$ on $U E_K$. It is a direct corollary of \cite[Lemma 19.14]{lesbook}.

\begin{lema}\label{lem_compare_forms}
Let $v_3$ denote the third coordinate on $S^2\subset \R^3$. On $U(E_K \setminus N(\Sigma))$ we have
 $$p_{(X,\Tilde{Y}^p)}^*(\omega_{S^2}) =\omega(\Tilde{\tau},F).$$
 On $U N(\Sigma)$ we have
 $$p_{(X,\Tilde{Y}^p)}^*(\omega_{S^2}) = \omega(\Tilde{\tau},F) + \frac{1}{2p} d(v_3\circ p_{\tau_e}) \wedge d\kiS{-1}{1}(u).$$
\end{lema}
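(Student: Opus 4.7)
The plan is a local pullback computation in the spirit of Lescop's Lemma~19.14. Fix a small ball $B \subset \check{M}$ and a lift $\nu_X$ of $\Tilde{Y}^p$ over $B$. The explicit constructions of $\Tilde{Y}^p$ in Section~\ref{sec_concrete_multisection} show that $\nu_X$ has the form $R_\theta(Y_e)$ for a rotation around $X$ by a locally defined angle $\theta$, which is locally constant outside $N(\Sigma)$ and satisfies $d\theta = (2\pi/p)\, d\kiS{-1}{1}(u)$ inside $N(\Sigma)$, in each of the explicit pieces of the construction. Completing to the orthonormal frame $(\nu_X, w_X, X)$ by $w_X = R_\theta(W_e)$ then yields $p_{(\nu_X, w_X, X)} = R_{-\theta} \circ p_{\tau_e}$ on $UM|_B$.

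Next I would carry out the pullback calculation. Using the $(\phi, v_3)$-coordinates on $S^2$ in which $\omega_{S^2} = \tfrac{1}{4\pi}\, d\phi \wedge dv_3$, the identity $p_{(\nu_X, w_X, X)} = R_{-\theta} \circ p_{\tau_e}$ gives $\phi \circ p_{(\nu_X, w_X, X)} = \phi \circ p_{\tau_e} - \theta$ and $v_3 \circ p_{(\nu_X, w_X, X)} = v_3 \circ p_{\tau_e}$, whence a direct expansion produces
\begin{equation*}
p_{(\nu_X, w_X, X)}^*(\omega_{S^2}) = p_{\tau_e}^*(\omega_{S^2}) + \tfrac{1}{4\pi}\, d(v_3 \circ p_{\tau_e}) \wedge d\theta.
\end{equation*}
This is the content of Lescop's Lemma~19.14 in our setting. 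Outside $N(\Sigma)$, $d\theta = 0$ and the correction vanishes; inside $N(\Sigma)$, substituting $d\theta = (2\pi/p)\, d\kiS{-1}{1}(u)$ gives exactly the claimed $\tfrac{1}{2p}\, d(v_3 \circ p_{\tau_e}) \wedge d\kiS{-1}{1}(u)$ correction.

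It remains to identify $p_{\tau_e}^*(\omega_{S^2})$ with $\omega(\Tilde{\tau}, F)$ on $UE_K$. Outside $N(K)$ this is by definition. On $UE_K \cap UN(K)$, the same formula gives $p_{\tau_e \circ R_\gamma}^*(\omega_{S^2}) = p_{\tau_e}^*(\omega_{S^2}) + \tfrac{1}{4\pi}\, d(v_3 \circ p_{\tau_e}) \wedge d\gamma$ for any smooth $X$-rotation angle $\gamma$. Checking each subregion of $E_K \cap N(K)$ using the constructions of Section~\ref{sec_pseudo_par_combing}, the correction terms from $T_K^{-1}$ and $F_K^{-1}$ in the average $\omega(K, \tau_N) = \tfrac{1}{2}\bigl(p_{\tau_N \circ T_K^{-1}}^*(\omega_{S^2}) + p_{\tau_N \circ F_K^{-1}}^*(\omega_{S^2})\bigr)$ either vanish individually (on $[a, a+\epsilon] \times \gamma \times [-1,1]$ via $\tau_N = \tau_e \circ T_\gamma$ together with $F_\gamma = T_\gamma$, or on $N(\partial[-1, 1])$ where $T_\gamma = F_\gamma = \mbox{Id}$) or cancel in the average (on $[b-\epsilon, b] \times \gamma \times [-1,1]$ via $\tau_N = \tau_e$ and $F_\gamma^{-1} = T_\gamma$), so $\omega(K,\tau_N) = p_{\tau_e}^*(\omega_{S^2})$ in all cases. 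The main obstacle is the pullback calculation of the second paragraph, in which $\theta$ must be treated as a position-dependent function in order to produce the essential cross-term $d\theta \wedge d(v_3 \circ p_{\tau_e})$; everything else is a bookkeeping exercise using the rotational invariance of $\omega_{S^2}$ around $(0,0,1)$ and the structure of the pseudo-parallelization.
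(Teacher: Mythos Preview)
Your argument is correct and is precisely the computation that the paper defers to \cite[Lemma 19.14]{lesbook}: the paper gives no proof of its own beyond that citation, and your local pullback calculation via $p_{(\nu_X,w_X,X)}=R_{-\theta}\circ p_{\tau_e}$, together with the case analysis on $E_K\cap N(K)$ showing $\omega(K,\tau_N)=p_{\tau_e}^*(\omega_{S^2})$, is exactly how that lemma specialises here. The identification of $d\theta$ with $(2\pi/p)\,d\kiS{-1}{1}(u)$ on $N(\Sigma)$ via the lift of the $\U_p$-twist, and the cancellation of the $T_K^{-1}$ and $F_K^{-1}$ corrections on $[b-\epsilon,b]\times\gamma\times[-1,1]$, are both handled correctly.
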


In the next lemma, which will be proved after Lemma~\ref{lem_homo}, we construct a closed $2$-form on $[0,1]\times \partial C_2(M)$ which interpolates between $\omega(\Tilde{\tau},F)$ and $p_{(X,\Tilde{Y}^p)}^*(\omega_{S^2})$. The
$[0,1]$-coordinate
is denoted by $t$. We choose a smooth function $\kiSneg{0}{1}\colon [0,1]\to [0,1]$ that is $1$ on a neighbourhood of $0$ and $0$ on a neighbourhood of $1$.

\begin{lema}\label{lem_forme_bord}
Let $p_2\colon [0,1]\times \partial C_2(M) \to  \partial C_2(M)$ denote the projection on the second factor. Let $\Omega$ be the closed $2$-form defined on a subset of $[0,1]\times \partial C_2(M)$ as
$$
\Omega=
\left\{
\begin{array}{lcl}
p_{(X,\Tilde{Y}^p)}^*(\omega_{S^2}) & \mbox{ on }& \{0\} \times \partial C_2(M)\\
(p_{\tau_e}\circ p_2)^*(\omega_{S^2})& \mbox{ on }& [0,1] \times U(E_K \setminus  N(\Sigma))\\
(p_{\tau_e}\circ p_2)^*(\omega_{S^2})+ \frac{1}{2p} d\bigl(\kiSneg{0}{1}(t) v_3\circ p_{\tau_e}\circ p_2 d(\kiS{-1}{1}\circ u\circ p_2)\bigr) & \mbox{ on }& [0,1] \times UN(\Sigma)\\
(G_M \circ p_2)^*(\omega_{S^2}) & \mbox{ on }& [0,1] \times ((\partial C_2(M)) \setminus U\check{M}) \\
\omega(\Tilde{\tau},F) & \mbox{ on }& \{1\} \times \partial C_2(M)\\
\end{array}
\right..
$$
There exists a closed $2$-form $\Omega((X,\Tilde{Y}^p),\Tilde{\tau})$ on $[0,1]\times \partial C_2(M)$ which extends $\Omega$.

When $\slk(K)=0$, there exists such a form $\Omega((X,\Tilde{Y}^p),\Tilde{\tau})$ with the following additional property. There exists a closed $2$-form $\Omega((X,\Tilde{Y}^p),\Tilde{\tau})'$ on $[a,b]\times [-1,1]\times S^2$ such that $\Omega((X,\Tilde{Y}^p),\Tilde{\tau})$ is the pull-back of $\Omega((X,\Tilde{Y}^p),\Tilde{\tau})'$ through the identification 
of $UN(K)$ with
$[a,b]\times S^1 \times [-1,1]\times S^2$ given by $\tau_{N}$ and the projection on the factor $[a,b] \times [-1,1]\times S^2$.
\end{lema}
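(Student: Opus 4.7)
The plan is first to confirm that the five prescriptions for $\Omega$ give a well-defined closed $2$-form on the union of the listed subsets of $[0,1]\times\partial C_2(M)$, then to extend it as a closed $2$-form across the complementary open region, which is $]0,1[\,\times U(\mathrm{int}(N(K)))$ with $\mathrm{int}(N(K))=\Psi(\,]a+\epsilon,b-\epsilon[\,\times S^1\times\,]-1+\epsilon,1-\epsilon[\,)$. Closedness of each piece is immediate: the pullbacks of the closed form $\omega_{S^2}$ are closed; the correction term $\frac{1}{2p}d\bigl(\kiSneg{0}{1}(t)\,v_3\circ p_{\tau_e}\circ p_2\,d(\kiS{-1}{1}\circ u\circ p_2)\bigr)$ is exact hence closed; and $\omega(\Tilde{\tau},F)$ is closed by construction.

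The compatibility of the five pieces on pairwise overlaps comes from five inputs. First, the $SO(2)$-invariance of $\omega_{S^2}$ shows that $p_{(X,\Tilde{Y}^p)}^*(\omega_{S^2})=p_{\tau_e}^*(\omega_{S^2})$ on $E_K\setminus N(\Sigma)$, handling the $t=0$ overlap with $U(E_K\setminus N(\Sigma))$. Second, $\kiSneg{0}{1}(0)=1$ combined with Lemma~\ref{lem_compare_forms} handles the $t=0$ overlap with $UN(\Sigma)$. Third, $\kiSneg{0}{1}(1)=0$ kills the correction term at $t=1$, leaving $p_{\tau_e}^*(\omega_{S^2})$, which equals $\omega(\Tilde{\tau},F)$ on the outer shell of $N(K)$ and on $E_K\setminus N(K)$ thanks to the pseudo-parallelization relations defining $\tau_N$ and the $SO(2)$-invariance of $\omega_{S^2}$. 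Fourth, $\kiS{-1}{1}\circ u$ is locally constant near $\partial N(\Sigma)$ so its differential vanishes, reconciling the $U(E_K\setminus N(\Sigma))$ and $UN(\Sigma)$ formulas on their overlap. Fifth, $p_{\tau_e}=G_M$ on $U\check{V}_\infty$ since $\tau_e$ is standard there, reconciling the $U\check{M}$ and $\partial C_2(M)\setminus U\check{M}$ pieces.

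For the extension, the region $U(\mathrm{int}(N(K)))$ is a trivial $S^2$-bundle over an open solid torus via $\tau_N$ and is homotopy equivalent to $S^1\times S^2$; its second cohomology is generated by the $S^2$-fiber class. The prescribed values on $\partial([0,1]\times U(\mathrm{int}(N(K))))$ each integrate to $1$ over any $S^2$-fiber, so their difference with the closed model form $p_{\tau_N}^*(\omega_{S^2})$ has trivial cohomology class on every face. Primitives can be constructed face by face and glued on overlaps via a Mayer-Vietoris argument, then extended to the interior of the region; adding back $p_{\tau_N}^*(\omega_{S^2})$ produces the desired closed extension $\Omega((X,\Tilde{Y}^p),\Tilde{\tau})$.

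For the additional property when $\slk(K)=0$, all prescribed boundary values on $[0,1]\times UN(K)$ are $c$-independent under the $\tau_N$-trivialization: $p_{(X,\Tilde{Y}^p)}^*(\omega_{S^2})=p_{\tau_N^{\mbox{\scriptsize multi}}}^*(\omega_{S^2})$ pulls back via the $SO(2)$-valued function $g(x,y)$, $\omega(K,\tau_N)$ comes from the $SO(3)$-valued $(x,y)$-dependent $F$, and the lateral value $p_{\tau_e}^*(\omega_{S^2})$ also has no $c$-dependence. Averaging the Mayer-Vietoris primitive over the $S^1$-rotation action on $UN(K)$ produces a $c$-invariant primitive that preserves the fixed boundary data, yielding a $c$-invariant closed extension which descends under the projection forgetting the $S^1$-factor to the required closed form $\Omega'$ on $[a,b]\times[-1,1]\times S^2$. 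The main obstacle is the consistent choice of primitives on the overlapping boundary faces (the Mayer-Vietoris gluing of local primitives into a globally defined one) and the verification that $S^1$-averaging does not disturb the prescribed lateral boundary values.
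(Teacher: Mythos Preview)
Your compatibility checks for the five pieces of $\Omega$ are correct, and so is the identification of $]0,1[\times U(\mathrm{int}(N(K)))$ as the region over which an extension must be produced. The gap is in the extension step itself. You claim that $\Omega-(p_{\tau_N}\circ p_2)^*(\omega_{S^2})$ is exact on each face of $\partial([0,1]\times UN(K))$ because both forms integrate to $1$ on every $S^2$-fibre, and you then invoke a Mayer--Vietoris argument to glue primitives. But the fibre class does not exhaust $H_2$ of this boundary. As the paper's Lemma~\ref{lem_homo} makes explicit, $H^2(\mathcal{U})$ (where $\mathcal{U}$ retracts onto $\partial([0,1]\times UN(K))$) has one generator beyond the image of $H^2([0,1]\times UN(K))$, detected by integration over the $2$-cycle $S_X=\partial\bigl([0,1]\times\Gamma(X|_{D_m})\bigr)$, the graph of $X$ over the boundary of the meridian disc. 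This is precisely the Mayer--Vietoris obstruction you wave past: when you try to match primitives on the corners $\{0,1\}\times U(\partial N(K))$, the discrepancy in the meridian direction cannot be absorbed, and the residual obstruction is $\int_{S_X}\Omega$.

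Showing $\int_{S_X}\Omega=0$ is the actual content of the proof, and it is a nontrivial cancellation. Over $-[0,1]\times\Gamma(X|_{\partial D_m})$ only the exact correction term on $UN(\Sigma)$ survives (since $v_3\circ p_{\tau_e}\equiv 1$ along the graph of $X$), and Stokes gives $1/2$. Over $-\{0\}\times\Gamma(X|_{D_m})$ the form $p_{(X,\Tilde{Y}^p)}^*(\omega_{S^2})$ vanishes identically on the graph of $X$, giving $0$. Over $\{1\}\times\Gamma(X|_{D_m})$ the $p_{\tau_N\circ T_K^{-1}}^*$ half of $\omega(\Tilde{\tau},F)$ again vanishes on the graph of $X$, but the $p_{\tau_N\circ F_K^{-1}}^*$ half restricts to the pullback of $\omega_{S^2}$ by the map $(x,y)\mapsto F(x,y)((0,0,1))$, which has degree~$1$ by Lemma~\ref{lem_degree_mone}; with the orientation of $D_m$ this contributes $-1/2$. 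The total is $0$, and only then does the closed extension exist. Your argument never engages with this computation, so the existence of the extension is left unproved. For the $\slk(K)=0$ addendum, the paper bypasses averaging by observing that $\Omega|_{\mathcal{U}}$ already descends to the quotient $\mathcal{U}'\subset[0,1]\times[a,b]\times[-1,1]\times S^2$ and extending there; but either way the same obstruction must first be checked.
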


To prove Lemma~\ref{lem_forme_bord}, we first note that
 Lemma~\ref{lem_compare_forms} implies that the form $\Omega$ is well-defined. It is in particular well-defined on
 $$\mathcal{U}=\bigl(\{0,1\}\times UN(K)\bigr) \cup \bigl([0,1] \times U(E_K\cap N(K))\bigr) \subset [0,1]\times UN(K),$$ which deformation retracts onto $\partial ([0,1]\times UN(K))$.  We would like to extend $\Omega$ on the remaining part of $[0,1]\times UN(K)$. The next easy lemma
 determines the homological obstruction to such an extension.

 \begin{lema}\label{lem_homo}
 Recall that $D_m$ denotes the meridian disk $[a,b]\times \{1\} \times [-1,1]$ of $N(K)$, oriented as $-[a,b]\times [-1,1]$. Let $\Gamma(X|_{D_m})\subset UN(K)$ denote the graph of the restriction of $X$ to $D_m$, oriented as $D_m$. Let $S_X$ denote the $2$-dimensional cycle $\partial([0,1]\times \Gamma(X|_{D_m}))$ of $\partial([0,1]\times UN(K))$.
 
 The image of the map $H^2_{dR}([0,1]\times UN(K);\R) \to H^2_{dR}(\mathcal{U};\R)$ induced by the inclusion is the space of homology classes of closed $2$-forms $\omega$ on $[0,1]\times UN(K)$ such that
 $\int_{S_X} \omega =0$.
 \end{lema}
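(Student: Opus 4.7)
My plan is to reduce to the pair $(W,\partial W)$ with $W=[0,1]\times UN(K)$, apply Stokes's theorem, and identify the homology class $[S_X]\in H_2(\partial W;\R)$ explicitly.

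First, since the shell $E_K\cap N(K)$ deformation retracts onto $\partial N(K)$, the subset $\mathcal{U}$ deformation retracts onto $\partial W=\{0,1\}\times UN(K)\cup[0,1]\times U\partial N(K)$, and the cycle $S_X$ already lies in $\partial W$. So it suffices to determine the image of $H^2_{dR}(W;\R)\to H^2_{dR}(\partial W;\R)$. Using the parallelization $\tau_N$ of $N(K)$ from Lemma~\ref{lem_para_int}, I identify $UN(K)\cong N(K)\times S^2$ with $X$ corresponding to the constant section $(0,0,1)$. Since $N(K)\simeq S^1$, we obtain $W\simeq S^1\times S^2$, hence $H^2_{dR}(W;\R)\cong\R$ is generated by the volume form of the fiber $S^2$. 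After smoothing corners, $\partial([0,1]\times N(K))\cong\partial([0,1]\times D^2)\times S^1\cong S^2\times S^1$, so $\partial W\cong S^2\times S^1\times S^2$ and $H^2_{dR}(\partial W;\R)\cong\R^2$ with basis given by the volume forms of the two $S^2$ factors.

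Second, Poincaré--Lefschetz duality gives $H^2_{dR}(W,\partial W;\R)\cong H_4(W;\R)=0$, so the long exact sequence of the pair shows that $H^2_{dR}(W)\to H^2_{dR}(\partial W)$ is injective with one-dimensional image, spanned by the fiber $S^2$ volume form. To compute $[S_X]$, I use the trivialization from $\tau_N$ in which $X$ is constant equal to $(0,0,1)$, so $\Gamma(X|_{D_m})=D_m\times\{(0,0,1)\}$ and $S_X=\partial([0,1]\times D_m)\times\{(0,0,1)\}$. After smoothing corners, $\partial([0,1]\times D_m)$ is a $2$-sphere in $\partial([0,1]\times N(K))\cong S^2\times S^1$ realizing the slice $S^2\times\{\mathrm{pt}\}$. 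Hence $[S_X]$ generates the base $S^2$ factor of $H_2(\partial W)\cong\R^2$, which is Poincaré dual to the base $S^2$ volume form.

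Finally, for any closed $2$-form $\Omega$ on $W$, Stokes's theorem gives $\int_{S_X}\Omega=\int_{[0,1]\times\Gamma(X|_{D_m})}d\Omega=0$. Thus the image of $H^2_{dR}(W)\to H^2_{dR}(\partial W)$ is contained in the kernel of the linear functional $[\omega]\mapsto\int_{S_X}\omega$. By the identification of $[S_X]$ above, this kernel is exactly the span of the fiber $S^2$ volume form, and its dimension coincides with that of the image. The lemma follows. The main technical step is the explicit identification of $[S_X]$ as a generator of the base $S^2$ direction of $H_2(\partial W)$; the rest is a standard application of Poincaré--Lefschetz duality and Stokes's theorem.
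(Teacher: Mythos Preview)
Your argument is correct and follows the natural route; the paper itself omits the proof, calling it an ``easy lemma'', so there is nothing substantive to compare against. The strategy you use --- trivialising $UN(K)$ via a parallelization with $X$ as third vector, computing $H^2$ of $W\simeq S^1\times S^2$ and of $\partial W\cong S^2\times S^1\times S^2$, using Poincar\'e--Lefschetz duality to see that the restriction map is injective with one-dimensional image, and then pinning down $[S_X]$ as the base-$S^2$ class --- is exactly what the author has in mind.

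One minor remark: you cite Lemma~\ref{lem_para_int} for the parallelization $\tau_N$, but that lemma only constructs $\tau_N^{\mbox{\scriptsize multi}}$ under the hypothesis $\slk(K)=0$, whereas Lemma~\ref{lem_homo} is stated and used without that hypothesis. You should instead cite the parallelization $\tau_N$ of $N(K)$ built in Section~\ref{sec_pseudo_par_combing}, which in both cases has $X$ as its third vector; alternatively, note that any trivialisation of $UN(K)$ suffices, since $D_m$ is contractible and hence the graph $\Gamma(X|_{D_m})$ is always homologous to $D_m\times\{\mathrm{pt}\}$. This does not affect the correctness of your argument.
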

 
 \begin{proof}[Proof of Lemma~\ref{lem_forme_bord}]
 We will prove that $\int_{S_X} \Omega =0$. We first compute the integral of $\Omega$ over $(-[0,1]\times \Gamma(X|_{\partial D_m}))\subset S_X$.
 The section $X$ is the third vector of $\tau_e$, so $v_3 \circ p_{\tau_e} \circ p_2$ is constant equal to $1$ on $[0,1]\times \Gamma(X|_{\partial D_m})$. Only the graph of $X$ over the $p$ segments of $\partial D_m \cap N(\Sigma)$ contribute to the integral, and each contributes in the same way. Recall that $\partial D_m$ is oriented as the positive normal to $\Sigma$. We get
 \begin{align*}
 \int_{-[0,1]\times \Gamma(X|_{\partial D_m})} \Omega  & = p\times \frac{1}{2p} \times  \int_{- [0,1]\times [-1,1]} d\bigl(\kiSneg{0}{1}(t)  d\kiS{-1}{1}(u)\bigr) \\ & = \frac{1}{2},
 \end{align*}
 where the last equality is a straighforward application of Stokes' theorem.
 
 The form $p_{(X,\Tilde{Y}^p)}^*(\omega_{S^2})$ is locally defined as the pull-back of $\omega_{S^2}$ by a map to $S^2$ associated with a local parallelization having $X$ as its third vector. So it is identically $0$ on the graph of $X$ over $\check{M}$. This implies that the integral of $\Omega$ over $- \{0\} \times \Gamma(X|_{D_m})$ is $0$.
 
 We prove that the integral of $\Omega$ over $\{1\} \times \Gamma(X|_{D_m})$ is $-1/2$. The restriction of $p^*_{\tau_{N} \circ T_K^{-1}}(\omega_{S^2})$ to the graph of $X$ over $N(K)$ is $0$, because $X$ is the third vector of $\tau_{N} \circ T_K^{-1}$. Recall the map $f$ from Lemma~\ref{lem_degree_mone}. Then $f$ is exactly the restriction of $p_{\tau_{N} \circ F_K^{-1}}$ to $\Gamma(X|_{D_m})$. Indeed, for $(x,y)\in [a,b]\times [-1,1]$ we have
 \begin{align*}
 \tau_N \circ F_K^{-1} (x,1,y,f(x,y))&=\tau_N \circ F_K^{-1} (x,1,y,F(x,y)(0,0,1))\\&=\tau_N(x,1,y,(0,0,1))\\&=X(x,1,y).
 \end{align*}
 We get
 \begin{align*}
 \int_{\{1\} \times \Gamma(X|_{D_m})} \Omega  & = \frac{1}{2} \int_{- [a,b]\times [-1,1]} f^*(\omega_{S^2}) \\ & = -\frac{1}{2}.
 \end{align*}
 This concludes the proof of the first part of the lemma.
 
 Assume now that $\slk(K)=0$. Consider the identification between $UN(K)$ and $[a,b]\times S^1 \times [-1,1]\times S^2$ given by $\tau_{N}$. Let $\mathcal{U}'$ denote the image of $\mathcal{U}$ by the projection of $[0,1]\times UN(K)$ onto the factor $[0,1]\times [a,b]\times [-1,1]\times S^2$. The restriction of the form $\Omega$ to $\mathcal{U}$ is the pull-back of a $2$-form $\Omega'$ on $\mathcal{U}' $. This form $\Omega'$ extends on $[0,1]\times [a,b]\times [-1,1] \times S^2$ as before. We define $\Omega((X,\Tilde{Y}^p),\Tilde{\tau})'$ on $[0,1]\times UN(K)$ to be the pull-back of $\Omega'$ by the above-mentionned projection.
\end{proof}

In the next proposition, we use the form $\Omega$ of Lemma~\ref{lem_forme_bord} to compare $z_n(M,X)$ and $z_n(M,\Tilde{\tau})$. When $\Gamma$ is a trivalent graph, recall the description of the anomalous face $F(V(\Gamma))$ of $C_{V(\Gamma)}(M)$ from Notation~\ref{nota_anomalous}.

\begin{propa}\label{prop_pseudo_combing}
  Let $\Omega((X,\Tilde{Y}^p),\Tilde{\tau})$ be as in Lemma~\ref{lem_forme_bord}. We have
  $$z_n(M,\Tilde{\tau})=z_n(M,X) + \sum_{(\Gamma,j)\in \Dc_n^e(\emptyset)} \int_{[0,1] \times F(V(\Gamma))|_{N(K)}} \bigwedge_{e\in E(\Gamma)} p_e^*(\Omega((X,\Tilde{Y}^p),\Tilde{\tau})).$$
  When $\slk(K)=0$, we obtain
  $$z_n(M,\Tilde{\tau})=z_n(M,X).$$
\end{propa}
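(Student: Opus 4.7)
The plan is to construct a closed interpolation $2$-form $\widetilde{\Omega}$ on $[0,1]\times C_2(M)$ whose restriction to $[0,1]\times \partial C_2(M)$ is the form $\Omega((X,\Tilde{Y}^p),\Tilde{\tau})$ of Lemma~\ref{lem_forme_bord}, and then to apply Stokes' theorem as in the proof of Proposition~\ref{prop_les}. Such an extension exists because $H_3(C_2(M);\R)=0$ implies that the restriction map $H^2([0,1]\times C_2(M);\R)\to H^2([0,1]\times \partial C_2(M);\R)$ is surjective. By the defining property of $\Omega$, the restrictions $\widetilde{\Omega}|_{\{0\}\times C_2(M)}$ and $\widetilde{\Omega}|_{\{1\}\times C_2(M)}$ are respectively a propagating form of $(C_2(M),g,X,\Tilde{Y}^p)$ and a homogeneous propagating form of $(C_2(M),\Tilde{\tau})$. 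After weighting by $\xi_n[\Gamma]$ and summing over $(\Gamma,j)\in \Dc_n^c(\emptyset)$, Theorems~\ref{thm_inv_combing} and \ref{thm_pseudo_par} show that these two restrictions compute $z_n(M,X)$ and $z_n(M,\Tilde{\tau})$ respectively.

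For each $(\Gamma,j)$, the closed $6n$-form $\bigwedge_{e\in E(\Gamma)} p_e^*(\widetilde{\Omega})$ on the $(6n+1)$-dimensional cylinder $[0,1]\times C_{V(\Gamma)}(M)$ integrates to zero over the boundary of the cylinder by Stokes' theorem. The two horizontal pieces $\{0,1\}\times C_{V(\Gamma)}(M)$ yield $z_n(M,\Tilde{\tau})-z_n(M,X)$ after summation, while the vertical piece $[0,1]\times \partial C_{V(\Gamma)}(M)$ is analyzed via the face decomposition of \cite[Proposition 9.2]{lesbook}, exactly as in Proposition~\ref{prop_les}: modulo the antisymmetry and Jacobi relations defining $\Ac_n^c(\emptyset)$, only the anomalous face $F(V(\Gamma))$ survives.

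I then split the anomalous contribution according to where the common collision point lies. On $[0,1]\times F(V(\Gamma))|_{E_K\setminus N(\Sigma)}$, the form $\widetilde{\Omega}$ equals $(p_{\tau_e}\circ p_2)^*(\omega_{S^2})$, so each $p_e^*(\widetilde{\Omega})$ factors through the Gauss map $F(V(\Gamma))|_{E_K}\to\check{S}_{V(\Gamma)}(\R^3)$ induced by $\tau_e$, and the whole $6n$-form factors through the $(6n-4)$-dimensional fiber $\check{S}_{V(\Gamma)}(\R^3)$, killing the contribution exactly as in Lemma~\ref{lem_anomalous}. On $[0,1]\times F(V(\Gamma))|_{N(\Sigma)}$, the extra summand $\eta=\frac{1}{2p}d\bigl(\kiSneg{0}{1}(t)(v_3\circ p_{\tau_e}\circ p_2)\,d(\kiS{-1}{1}\circ u\circ p_2)\bigr)$ appears. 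Every component of $p_e^*\eta$ carries a $d(\kiS{-1}{1}\circ u)$ factor, so two or more pull-backs of $\eta$ wedge to zero; for a single $p_{e_0}^*\eta$, the complementary wedge $\bigwedge_{e\neq e_0}p_e^*((p_{\tau_e})^*\omega_{S^2})$ is a $(6n-2)$-form that still factors through $\check{S}_{V(\Gamma)}(\R^3)$, a space of dimension $6n-4$, hence vanishes. Only the contribution over $[0,1]\times F(V(\Gamma))|_{N(K)}$ remains, yielding the first identity.

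For the second assertion, assume $\slk(K)=0$. The second part of Lemma~\ref{lem_forme_bord} then provides a $\widetilde{\Omega}$ whose restriction to $[0,1]\times UN(K)$ is pulled back, via the trivialization $\tau_N$, from a closed $2$-form on the $5$-dimensional space $[0,1]\times [a,b]\times [-1,1]\times S^2$ along the projection that forgets the $S^1$ coordinate of $N(K)=[a,b]\times S^1\times [-1,1]$. Each $p_e^*(\widetilde{\Omega})$ on $[0,1]\times F(V(\Gamma))|_{N(K)}$ is therefore independent of the $S^1$ coordinate of the common collision point, and the full wedge $\bigwedge_{e\in E(\Gamma)} p_e^*(\widetilde{\Omega})$ descends along the projection onto $[0,1]\times [a,b]\times [-1,1]\times \check{S}_{V(\Gamma)}(\R^3)$, a space of dimension $6n-1$. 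A $6n$-form on a $(6n-1)$-dimensional manifold vanishes identically, hence $\int_{[0,1]\times F(V(\Gamma))|_{N(K)}}\bigwedge_{e\in E(\Gamma)} p_e^*(\widetilde{\Omega})=0$, which finishes the proof. The main delicate point is carefully matching the definition of $\Omega$ from Lemma~\ref{lem_forme_bord} with the propagating and homogeneous propagating forms at $t=0$ and $t=1$ so that Theorems~\ref{thm_inv_combing} and \ref{thm_pseudo_par} both apply to the endpoints of the same interpolation.
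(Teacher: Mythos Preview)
Your proof is correct and follows essentially the same route as the paper's: extend $\Omega((X,\Tilde{Y}^p),\Tilde{\tau})$ to a closed form on $[0,1]\times C_2(M)$, apply \cite[Proposition 9.2]{lesbook} to reduce to the anomalous face, and kill the contribution over $E_K$ by factorization through a low-dimensional target. The only noticeable difference is in the $N(\Sigma)$ step: the paper observes directly that the full wedge $\bigwedge_e p_e^*(\Omega)$ factors through the projection $[0,1]\times N(\Sigma)\times\check S_{V(\Gamma)}(\R^3)\to [0,1]\times[-1,1]\times\check S_{V(\Gamma)}(\R^3)$ (since $\Omega$ only depends on $t$, $u$, and the $\tau_e$-fiber coordinate), landing in a $(6n-2)$-dimensional space, whereas you expand the wedge into the binomial pieces $\bigwedge_{e\in S}p_e^*\eta\wedge\bigwedge_{e\notin S}p_e^*((p_{\tau_e})^*\omega_{S^2})$ and kill each term separately. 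Both arguments are fine; the paper's single-factorization argument is a bit cleaner, while yours makes the mechanism (the shared $d(\chi_\interp\circ u)$ factor) more explicit.
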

\begin{proof}
 Let $\omega_0$ be a propagating form of $(C_2(M),X,\Tilde{Y}^p)$ such that $\omega_0=p_{X,\Tilde{Y}^p}^*(\omega_{S^2})$ on $\partial C_2(M)$. Let $\omega_1$ be a homogeneous propagating form of $(C_2(M),\Tilde{\tau})$. Let $\omega^{\partial}$ be the closed $2$-form on $\partial ([0,1]\times C_2(M))$ such that $(\omega^\partial)|_{\{i\}\times C_2(M)}=\omega_i$ for $i\in\{0,1\}$ and $(\omega^\partial)|_{[0,1]\times \partial C_2(M)}=\Omega((X,\Tilde{Y}^p),\Tilde{\tau})$. Homological arguments again show that there exists a closed $2$-form $\omega$ on $[0,1]\times C_2(M)$ which restricts as $\omega^{\partial}$ on the boundary of $[0,1]\times C_2(M)$.
 
 As in the proof of Theorem~\ref{thm_inv_combing},
\cite[Proposition 9.2]{lesbook} applies 
 in our case, with a constant family of forms $(\omega_i=\omega)_{i\in\{1,\dots,3n\}}$ on $[0,1]\times C_2(M)$. We obtain
 $$z_n(M,\Tilde{\tau})-z_n(M,X)= \sum_{(\Gamma,j)\in \Dc_n^c(\emptyset)} \int_{[0,1] \times F(V(\Gamma))} \bigwedge_{e\in E(\Gamma)} p_e^*(\Omega((X,\Tilde{Y}^p),\Tilde{\tau})).$$
 The parallelization $\tau_e$ induces an identification between  $F(V(\Gamma))|_{E_K}$ and $E_K \times \check{S}_{V(\Gamma)}(\R^3)$.
 
 Let $\Gamma$ be a trivalent graph. The restriction of $\bigwedge_{e\in E(\Gamma)} p_e^*(\Omega((X,\Tilde{Y}^p),\Tilde{\tau}))$ to $[0,1] \times (E_K \setminus N(\Sigma)) \times\check{S}_{V(\Gamma)}(\R^3) $ factors through the projection
 on the factor $\check{S}_{V(\Gamma)}(\R^3)$. The dimension of $\check{S}_{V(\Gamma)}(\R^3)$ is $2|E(\Gamma)|-4$, while the degree of this form is $2|E(\Gamma)|$. Thus, this form vanishes. 
 
 The restriction of the form $\bigwedge_{e\in E(\Gamma)} p_e^*(\Omega((X,\Tilde{Y}^p),\Tilde{\tau}))$ to $[0,1] \times  N(\Sigma) \times \check{S}_{V(\Gamma)}(\R^3)$ factors through the projection
 on the factor $[0,1]\times [-1,1]\times \check{S}_{V(\Gamma)}(\R^3)$. The dimension of this space is $2|E(\Gamma)|-2$. Thus, this form also vanishes. This concludes the proof of the first part of the proposition.
 
 Assume that $\slk(K)=0$. The parallelization $\tau_{N}$ induces an identification between $F(V(\Gamma))|_{N(K)}$ and $[a,b]\times S^1 \times [-1,1] \times \check{S}_{V(\Gamma)}(\R^3)$. The restriction of the form $\bigwedge_{e\in E(\Gamma)} p_e^*(\Omega((X,\Tilde{Y}^p),\Tilde{\tau}))$ to $[0,1] \times N(K)\times\check{S}_{V(\Gamma)}(\R^3) $ factors through the projection
 on the factor
 $[0,1]\times [a,b]\times [-1,1]\times \check{S}_{V(\Gamma)}(\R^3)$. 
 The dimension of this space is $2|E(\Gamma)|-1$. So this form vanishes. The result follows.
 
\end{proof}

\begin{coroa}\label{coro_slknul}
Recall the combing $[X]$ and the knot $K$ from the beginning of Section~\ref{sec_proof_var}. Recall the pseudo-parallelization $\Tilde{\tau}$ of $\check{M}$ from the end of Section~\ref{sec_pseudo_par_combing}. Assume that $\slk(K)=0$. Then $p_1(X)=p_1(\Tilde{\tau})$. Moreover, we have
  $$z_n(M,X)=z_n(M)+\frac{1}{4}p_1(X)\beta_n.$$
\end{coroa}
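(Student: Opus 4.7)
The corollary is a short assembly of already-proved ingredients; no new geometric analysis should be needed.

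First I would combine Proposition~\ref{prop_pseudo_combing} with Theorem~\ref{thm_pseudo_par}. The second half of Proposition~\ref{prop_pseudo_combing} (the case $\slk(K)=0$) gives the equality $z_n(M,\Tilde{\tau})=z_n(M,X)$ for every $n\geq 1$. Theorem~\ref{thm_pseudo_par} applied to the pseudo-parallelization $\Tilde{\tau}$ of Section~\ref{sec_pseudo_par_combing} yields
$$z_n(M,\Tilde{\tau})=z_n(M)+\frac{1}{4}p_1(\Tilde{\tau})\beta_n.$$
Combining the two, we immediately obtain
$$z_n(M,X)=z_n(M)+\frac{1}{4}p_1(\Tilde{\tau})\beta_n \qquad (*)$$
for every $n\geq 1$. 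It therefore suffices to identify the scalar $p_1(\Tilde{\tau})$ with $p_1(X)$.

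Next I would extract this scalar identity from the degree-one case, which has already been treated independently. Specialising $(*)$ to $n=1$ gives $z_1(M,X)=z_1(M)+\frac{1}{4}p_1(\Tilde{\tau})\beta_1$. On the other hand, Proposition~\ref{prop_eq_theta_inv} (whose proof goes through Lescop's Theta-invariant description and does not rely on the present section) states $z_1(M,X)=z_1(M)+\frac{1}{4}p_1(X)\beta_1$. Subtracting the two expressions we get
$$\frac{1}{4}\bigl(p_1(\Tilde{\tau})-p_1(X)\bigr)\beta_1=0$$
in $\A_1^c(\emptyset)$. Since $\beta_1=\tfrac{1}{12}[\tata]\neq 0$ in $\A_1^c(\emptyset)$ (recorded just before Proposition~\ref{prop_eq_theta_inv}), we conclude $p_1(X)=p_1(\Tilde{\tau})$.

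Substituting this equality of rationals back into $(*)$ yields the announced variation formula $z_n(M,X)=z_n(M)+\tfrac{1}{4}p_1(X)\beta_n$ for all $n$, completing the proof. The only potential obstacle is conceptual rather than technical: one must be comfortable that the degree-one comparison suffices to pin down the scalar $p_1(\Tilde{\tau})$ as equal to $p_1(X)$, after which the higher-degree statement follows with no further computation because $\beta_n$ appears only multiplied by the same scalar in $(*)$.
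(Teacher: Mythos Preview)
Your proof is correct and follows essentially the same route as the paper: combine Proposition~\ref{prop_pseudo_combing} (the $\slk(K)=0$ case) with Theorem~\ref{thm_pseudo_par}, then use the degree-one instance together with Proposition~\ref{prop_eq_theta_inv} and $\beta_1\neq 0$ to extract $p_1(X)=p_1(\Tilde{\tau})$. The only difference is cosmetic ordering; the paper first isolates the degree-one equality before passing to general $n$, whereas you derive $(*)$ for all $n$ first and then specialise.
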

\begin{proof}
 Apply Proposition~\ref{prop_pseudo_combing} in degree $1$ to get $z_1(M,X)=z_1(M,\Tilde{\tau}).$ Recall from Proposition~\ref{prop_eq_theta_inv} that
 $ z_1(M,X)=z_1(M)+\frac{1}{4}p_1(X)\beta_1$. Theorem~\ref{thm_pseudo_par} in degree $1$ shows  $z_1(M,\Tilde{\tau})=z_1(M)+\frac{1}{4}p_1(\Tilde{\tau})\beta_1$. The equality $p_1(X)=p_1(\Tilde{\tau})$ follows.
 
 Proposition~\ref{prop_pseudo_combing} in degree $n$ implies $z_n(M,X)=z_n(M,\Tilde{\tau})$. Theorem~\ref{thm_pseudo_par} in degree $n$ implies
 $z_n(M,\Tilde{\tau})=z_n(M)+\frac{1}{4}p_1(\Tilde{\tau})\beta_n$. The result follows.
\end{proof}

\subsection{Concluding the proof of the variation formula}
\label{sec_conclude}

In this section, we prove Theorem~\ref{thm_inv_var} when $\slk(K)\neq 0$.

Let $r\in]0,1/4p[$. Let $(B_i(3r))_{i\in\{1,\dots,p\}}$ be a family of pairwise disjoint open balls of radius $3r$ in $B(1)\subset \R^3$.  Recall that a neighbourhood $V_\infty$ of $\infty\in M$ is identified with $S^3 \setminus B(1)$. Let $B_M$ denote the manifold $(M\setminus V_\infty) \cup ( \overline{B(2)}\setminus B(1))$. Let $\connect^p M$ (resp. $\connect^p \check{M}$) denote the manifold obtained by gluing $S^3 \setminus \bigcup_{i\in\{1,\dots,p\}} B_i(r)$ (resp. $\R^3 \setminus \bigcup_{i\in\{1,\dots,p\}} B_i(r)$) and $p$ copies of $B_M$ along $\bigcup_{i\in\{1,\dots,p\}} (\overline{B_i(2r)} \setminus B_i(r))$ and collar neighbourhoods $ \overline{B(2)}\setminus B(1)$ of the boundary of the $p$ copies of $B_M$.

The combings $[X]$ defined on the copies of $B_M$ extend to a combing $[\connect^p X]$ on $\connect^p \check{M}$. The $p$-multisections $\Tilde{Y}^p$ of $UX^\perp$ over $B_M$ extend as a $p$-multisection $\connect^p \Tilde{Y}^p$ of $U(\connect^p X)^\perp$. The pseudo-parallelizations $\Tilde{\tau}$ defined on the copies of $B_M$ extend to a pseudo-parallelization $\connect^p \Tilde{\tau}$ of $\connect^p \check{M}$.

\begin{lema}\label{lem_connect}
We have
 $$z_n(M, \Tilde{\tau}) - z_n(M, X) =\frac{1}{p} \bigl( z_n(\connect^p M,\connect^p \Tilde{\tau}) - z_n(\connect^p M,\connect^p X) \bigr).$$
\end{lema}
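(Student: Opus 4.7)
The plan is to express both differences in the identity as integrals over anomalous faces using Proposition~\ref{prop_pseudo_combing}, then exploit the $p$-fold symmetry of $\connect^p M$, which contains $p$ canonical copies $N(K)_1,\ldots,N(K)_p$ of the tubular neighbourhood $N(K)$, one inside each copy of $B_M$.

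First I would apply Proposition~\ref{prop_pseudo_combing} to $M$ to rewrite
$$z_n(M, \Tilde{\tau}) - z_n(M, X) = \sum_{(\Gamma, j) \in \Dc_n^c(\emptyset)} \int_{[0,1] \times F(V(\Gamma))|_{N(K)}} \bigwedge_{e \in E(\Gamma)} p_e^*(\Omega_M),$$
where $\Omega_M := \Omega((X,\Tilde{Y}^p),\Tilde{\tau})$ is the interpolating form of Lemma~\ref{lem_forme_bord}. Next I would construct the analogous form $\Omega_{\connect^p M}$ on $[0,1]\times \partial C_2(\connect^p M)$ in such a way that, on each $[0,1]\times UN(K)_i$, it is the pull-back of $\Omega_M$ via the canonical identification $N(K)_i \simeq N(K)$. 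Because the formulas defining $\Omega$ in Lemma~\ref{lem_forme_bord} are local in nature, they transplant directly to each copy $N(K)_i$; on the glued regions $\bigcup_i(\overline{B_i(2r)}\setminus B_i(r))$ and on $S^3 \setminus \bigcup_i B_i(r)$, where the combing, multisection, and pseudo-parallelization are all standard, I would set the form to be the pull-back of $\omega_{S^2}$ by the standard parallelization of $\R^3$.

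I would then apply the proof of Proposition~\ref{prop_pseudo_combing} to $\connect^p M$ (the argument adapts verbatim to the multi-knot situation). Setting $N_p = \bigsqcup_{i=1}^p N(K)_i$, the degree argument in that proof shows that the integrand $\bigwedge_{e} p_e^*(\Omega_{\connect^p M})$ vanishes on the portion of the anomalous face lying over $\connect^p \check{M} \setminus N_p$. Therefore
$$z_n(\connect^p M, \connect^p \Tilde{\tau}) - z_n(\connect^p M, \connect^p X) = \sum_{(\Gamma, j) \in \Dc_n^c(\emptyset)} \sum_{i=1}^{p} \int_{[0,1] \times F(V(\Gamma))|_{N(K)_i}} \bigwedge_{e \in E(\Gamma)} p_e^*(\Omega_{\connect^p M}).$$
By the compatibility built into $\Omega_{\connect^p M}$, each of the $p$ inner integrals equals the corresponding integral over $[0,1]\times F(V(\Gamma))|_{N(K)}$ computed with $\Omega_M$, so the sum over $i$ produces exactly $p$ times the expression for $z_n(M,\Tilde{\tau}) - z_n(M,X)$. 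Dividing by $p$ yields the identity.

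The main obstacle I foresee is arranging the interpolating form coherently on the $p$ copies. The extension step in Lemma~\ref{lem_forme_bord} involves a prescribed cohomology class on the cycle $S_X \subset \partial([0,1] \times UN(K))$, and I must verify that in $\connect^p M$ the analogous obstruction cycle splits as a disjoint union of $p$ copies, one inside each $[0,1]\times UN(K)_i$, so that the extension may be performed independently on each copy. Once the standard behaviour of the data on the gluing regions is respected, any remaining ambiguity in the extension is irrelevant to the anomalous face integrals by Stokes' theorem, since the integrand vanishes on the complement of $N_p$.
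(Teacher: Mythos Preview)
Your approach is essentially the paper's: both reduce the two differences to anomalous-face integrals via Proposition~\ref{prop_pseudo_combing} and then use the $p$-fold structure of $\connect^p M$. The paper sidesteps the obstacle you flag by transplanting the \emph{already constructed} form $\Omega((X,\Tilde{Y}^p),\Tilde{\tau})$ of Lemma~\ref{lem_forme_bord} to all of each $[0,1]\times U(B_M)_i$ (not just $UN(K)_i$), setting $\Omega=G_{S^3}^*(\omega_{S^2})$ on the remaining standard part; since the extension over $[0,1]\times UN(K)$ was already performed once on $M$, no new obstruction analysis on $\connect^p M$ is required.
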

\begin{proof}
Let $\Omega((X,\Tilde{Y}^p),\Tilde{\tau})$ be as in Lemma~\ref{lem_forme_bord}. We define a closed $2$-form $\Omega$ on $[0,1]\times \partial C_2(\connect^p M)$.
For each of the $p$ copies $(B_M)_i,i\in\{1,\dots,p\}$ of $B_M$ inside $\connect^p M$, we set
$$\Omega=\Omega((X,\Tilde{Y}^p),\Tilde{\tau})$$
on $[0,1]\times U(B_M)_i$. On $[0,1]\times
U(\R^3 \setminus \bigcup_{i\in\{1,\dots,p\}} B_i(r)) \cup ((\partial C_2(\connect^p M)) \setminus U(\connect^p \check{M}))$, we set
$$\Omega=G_{S^3}^*(\omega_{S^2}).$$

The form $\Omega$ restricts as $p_{(\connect^p X,\connect^p \Tilde{Y}^p)}^*(\omega_{S^2})$ on $\{0\}\times \partial C_2(\connect^p M)$. It restricts as the homogeneous boundary form of $(C_2(\connect^p M),\connect^p \Tilde{\tau})$ on $\{1\}\times \partial C_2(\connect^p M)$. When $\Gamma$ is a trivalent graph, let $F(M,V(\Gamma))$ denote the anomalous face of $C_{V(\Gamma)}(M)$ and let $F(\connect^p M,V(\Gamma))$ denote the anomalous face of $C_{V(\Gamma)}(\connect^p M)$. Using arguments already given in the proof of Proposition~\ref{prop_pseudo_combing}, we get
\begin{align*}
z_n(\connect^p M,\connect^p \Tilde{\tau}) - z_n(\connect^p M,\connect^p X)& =  \xi_n \sum_{(\Gamma,j)\in \Dc_n^c(\emptyset)} \int_{[0,1] \times F(\connect^p M,V(\Gamma))} \bigwedge_{e\in E(\Gamma)} p_e^*(\Omega).
\end{align*}
For any $(\Gamma,j)\in \Dc_n^c(\emptyset)$, the form $\bigwedge_{e\in E(\Gamma)} p_e^*(\Omega)$ is zero on the part of $[0,1] \times F(\connect^p M,V(\Gamma))$ over $[0,1] \times U(\R^3 \setminus \bigcup_{i\in\{1,\dots,p\}} B_i(r))$. Our definition of $\Omega$ implies that the right-hand side of the above equation is
\begin{align*}
 p \times \xi_n \sum_{(\Gamma,j)\in \Dc_n^c(\emptyset)} \int_{[0,1] \times F(M,V(\Gamma))} \bigwedge_{e\in E(\Gamma)} p_e^*(\Omega((X,\Tilde{Y}^p),\Tilde{\tau})),
\end{align*}
which is equal to $p (z_n(M, \Tilde{\tau}) - z_n(M, X))$. The result follows.
\end{proof}

 Applying the above equality for $n=1$ and using the formulas of Proposition~\ref{prop_eq_theta_inv} and Theorem~\ref{thm_pseudo_par}, we get the following corollary of Lemma~\ref{lem_connect}.
\begin{coroa}\label{coro_connect}
We have
 $$p_1(\connect^p \Tilde{\tau})-p_1(\connect^p X)= p\bigl( p_1(\Tilde{\tau})-p_1(X)\bigr).$$
 \end{coroa}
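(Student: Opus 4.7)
The plan is to apply the preceding relations in degree one to both $M$ and $\connect^p M$ and cancel a common factor. By Lemma~\ref{lem_connect} evaluated at $n=1$, we have
$$z_1(M,\Tilde{\tau}) - z_1(M,X) = \frac{1}{p}\bigl(z_1(\connect^p M,\connect^p \Tilde{\tau}) - z_1(\connect^p M,\connect^p X)\bigr).$$
I intend to rewrite each of the four terms on the right side and the left side using the known variation formulas, so that $z_1(M)$ and $z_1(\connect^p M)$ cancel in each difference, leaving only $p_1$-contributions.

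First I would note that $\connect^p M$ is again a rational homology sphere, as connected sums preserve the property of having the rational homology of $S^3$. Hence Proposition~\ref{prop_eq_theta_inv} applies to it, giving $z_1(\connect^p M,\connect^p X) = z_1(\connect^p M) + \frac{1}{4} p_1(\connect^p X)\beta_1$, and likewise Proposition~\ref{prop_eq_theta_inv} applied to $M$ gives $z_1(M,X) = z_1(M) + \frac{1}{4} p_1(X)\beta_1$. Symmetrically, Theorem~\ref{thm_pseudo_par} in degree one applied to the pseudo-parallelizations $\Tilde{\tau}$ and $\connect^p \Tilde{\tau}$ yields $z_1(M,\Tilde{\tau}) = z_1(M) + \frac{1}{4} p_1(\Tilde{\tau})\beta_1$ and $z_1(\connect^p M,\connect^p \Tilde{\tau}) = z_1(\connect^p M) + \frac{1}{4} p_1(\connect^p \Tilde{\tau})\beta_1$. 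Subtracting the combing formula from the pseudo-parallelization formula on each manifold, the invariants $z_1(M)$ and $z_1(\connect^p M)$ cancel, and we are left with
$$z_1(M,\Tilde{\tau}) - z_1(M,X) = \frac{1}{4}\bigl(p_1(\Tilde{\tau}) - p_1(X)\bigr)\beta_1$$
and the analogous identity on $\connect^p M$.

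Substituting these two identities into the displayed equation from Lemma~\ref{lem_connect} gives
$$\frac{1}{4}\bigl(p_1(\Tilde{\tau}) - p_1(X)\bigr)\beta_1 = \frac{1}{4p}\bigl(p_1(\connect^p \Tilde{\tau}) - p_1(\connect^p X)\bigr)\beta_1.$$
The only remaining step is to cancel $\beta_1$ on both sides. This is legitimate because, as recalled right before Proposition~\ref{prop_eq_theta_inv}, we have $\beta_1 = \frac{1}{12}[\tata]$, and the class $[\tata]$ of the theta graph is a nonzero element of $\A_1^c(\emptyset)$. Multiplying both sides by $4p/\beta_1$ (or more precisely, extracting the scalar coefficient along $[\tata]$) yields the claimed equality
$$p_1(\connect^p \Tilde{\tau}) - p_1(\connect^p X) = p\bigl(p_1(\Tilde{\tau}) - p_1(X)\bigr).$$
The only genuine subtlety is the cancellation of $\beta_1$, which is the main (and only mild) obstacle; the rest of the argument is pure bookkeeping of the already-established variation formulas.
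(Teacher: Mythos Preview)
Your proof is correct and follows exactly the approach indicated in the paper: apply Lemma~\ref{lem_connect} at $n=1$, use Proposition~\ref{prop_eq_theta_inv} and Theorem~\ref{thm_pseudo_par} on both $M$ and $\connect^p M$ to replace each $z_1$ by $z_1(\cdot)+\tfrac14 p_1(\cdot)\beta_1$, and cancel the nonzero class $\beta_1=\tfrac{1}{12}[\tata]$. The paper states this in a single sentence; you have simply unpacked it.
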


 The group 
 $H^2(\connect^p M;\Z)$ is isomorphic to the direct sum of $p$ copies of the group $H^2(M;\Z)$.  The Euler class of $U(\connect^p X)^\perp$ is $(e(UX^\perp),\dots,e(UX^\perp))$ with respect to this identification. The
 self-linking
 number of the Poincaré dual to $e((\connect^p X)^\perp)$ is the class of $p\times (-q/p)$ in $\Q/\Z$. Thus it is $0$.
 
 \begin{lema}\label{lem_connect_var}
  We have
  $$z_n(\connect^p M,\connect^p X)- z_n(\connect^p M,\connect^p \Tilde{\tau}) = \frac{1}{4}\bigl(p_1(\connect^p X) - p_1(\connect^p \Tilde{\tau})\bigr)\beta_n.$$
 \end{lema}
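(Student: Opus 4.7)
The plan is to apply Corollary~\ref{coro_slknul} and Theorem~\ref{thm_pseudo_par} to the connected sum $\connect^p M$ equipped with the combing $\connect^p X$ and the pseudo-parallelization $\connect^p \Tilde{\tau}$, and then subtract.

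First, I would verify that Corollary~\ref{coro_slknul} applies to $\connect^p M$ with combing $\connect^p X$. This is exactly the content of the paragraph just before the lemma: the Euler class $e(U(\connect^p X)^\perp)$ corresponds to $p$ copies of $e(UX^\perp)$ under the splitting $H^2(\connect^p M;\Z) \cong \bigoplus_{i=1}^p H^2(M;\Z)$, and its Poincaré dual has self-linking number $p \cdot (-q/p) \equiv 0$ in $\Q/\Z$. So the hypothesis $\slk = 0$ needed to apply Corollary~\ref{coro_slknul} to $(\connect^p M, \connect^p X)$ is indeed satisfied.

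Applying Corollary~\ref{coro_slknul} to $(\connect^p M, \connect^p X)$ yields
$$z_n(\connect^p M, \connect^p X) = z_n(\connect^p M) + \frac{1}{4} p_1(\connect^p X) \beta_n.$$
Applying Theorem~\ref{thm_pseudo_par} to $(\connect^p M, \connect^p \Tilde{\tau})$ yields
$$z_n(\connect^p M, \connect^p \Tilde{\tau}) = z_n(\connect^p M) + \frac{1}{4} p_1(\connect^p \Tilde{\tau}) \beta_n.$$
Subtracting these two equalities, the common term $z_n(\connect^p M)$ cancels and we obtain the desired identity.

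There is no real obstacle here: the main content has already been done in establishing Corollary~\ref{coro_slknul} (the variation formula in the special case $\slk(K)=0$) and Theorem~\ref{thm_pseudo_par} (Lescop's variation formula for pseudo-parallelizations). The only thing to check carefully is the applicability of Corollary~\ref{coro_slknul} to $\connect^p M$, which rests on the computation $\slk = 0$ recorded just above. This lemma is the bridge that, combined with Corollary~\ref{coro_connect} and Lemma~\ref{lem_connect}, will allow one to transfer the variation formula from $\connect^p M$ back to $M$ in the case $\slk(K) \neq 0$ in the concluding step of Section~\ref{sec_conclude}.
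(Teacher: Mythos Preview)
Your proposal is correct and follows essentially the same approach as the paper: apply Corollary~\ref{coro_slknul} to $(\connect^p M,\connect^p X)$ (valid because the self-linking number vanishes, as noted just before the lemma), apply Theorem~\ref{thm_pseudo_par} to $(\connect^p M,\connect^p\Tilde{\tau})$, and subtract.
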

 \begin{proof}
   Corollary \ref{coro_slknul} yields
  $z_n(\connect^p M,\connect^p X)=z_N(\connect^p M)+\frac{1}{4}p_1(\connect^p X)\beta_n$. Theorem~\ref{thm_pseudo_par} implies that $z_n(\connect^p M,\connect^p \Tilde{\tau})=z_N(\connect^p M)+\frac{1}{4}p_1(\connect^p \Tilde{\tau})\beta_n$. The result follows.
 \end{proof}

\begin{proof}[Proof of Theorem~\ref{thm_inv_var}]
Injecting the equality of Corollary \ref{coro_connect} into Lemma~\ref{lem_connect_var}, we obtain
$$z_n(\connect^p M,\connect^p X)- z_n(\connect^p M,\connect^p \Tilde{\tau}) = \frac{p}{4}\bigl(p_1( X) - p_1(\Tilde{\tau})\bigr)\beta_n.$$
Combining this equality and Lemma~\ref{lem_connect}, we get
$$z_n(M, X) - z_n(M, \Tilde{\tau}) = \frac{1}{4}\bigl(p_1( X) - p_1(\Tilde{\tau})\bigr)\beta_n.$$
As $z_n(M, \Tilde{\tau})=z_n(M)+1/4 p_1(\Tilde{\tau})\beta_n$, the result follows.
\end{proof}

\bibliographystyle{plain}
\bibliography{bibli_comb.bib}

\def\cprime{$'$}
\begin{thebibliography}{10}

\bibitem{FultonMcP}
William Fulton and Robert MacPherson.
\newblock A compactification of configuration spaces.
\newblock {\em Ann. of Math. (2)}, 139(1):183--225, 1994.

\bibitem{Go}
Robert~E. Gompf.
\newblock Handlebody construction of {S}tein surfaces.
\newblock {\em Ann. of Math. (2)}, 148(2):619--693, 1998.

\bibitem{hirzebruchEM}
Friedrich E.~P. Hirzebruch.
\newblock Hilbert modular surfaces.
\newblock {\em Enseignement Math. (2)}, 19:183--281, 1973.

\bibitem{km}
Rob Kirby and Paul Melvin.
\newblock Canonical framings for {$3$}-manifolds.
\newblock In {\em Proceedings of 6th {G}\"okova {G}eometry-{T}opology
  {C}onference}, volume~23 of {\em Turkish J. Math.}, pages 89--115, 1999.

\bibitem{ko}
Maxim Kontsevich.
\newblock Feynman diagrams and low-dimensional topology.
\newblock In {\em First {E}uropean {C}ongress of {M}athematics, {V}ol.\ {II}
  ({P}aris, 1992)}.

\bibitem{kt}
Greg Kuperberg and Dylan Thurston.
\newblock Perturbative 3--manifold invariants by cut-and-paste topology.
\newblock \href{http://arxiv.org/abs/math/9912167}{math.GT/9912167}, 1999.

\bibitem{lesHC}
Christine Lescop.
\newblock A formula for the {$\Theta$}-invariant from {H}eegaard diagrams.
\newblock {\em Geom. Topol.}, 19(3):1205--1248, 2015.

\bibitem{lescomb}
Christine Lescop.
\newblock On homotopy invariants of combings of three-manifolds.
\newblock {\em Canad. J. Math.}, 67(1):152--183, 2015.

\bibitem{lesbook}
Christine Lescop.
\newblock {\em Invariants of Links and 3-Manifolds from Graph Configurations}.
\newblock EMS Press, November 2024.

\bibitem{Man}
Yohan Mandin-Hublé.
\newblock A three-manifold invariant from graph configurations.
\newblock \href{https://arxiv.org/abs/2311.16972}{math.GT/2311.16972}, to
  appear in Annales de la Faculté des Sciences de Toulouse.

\bibitem{shimizu}
Tatsuro Shimizu.
\newblock An invariant of rational homology 3-spheres via vector fields.
\newblock {\em Algebr. Geom. Topol.}, 16(6):3073--3101, 2016.

\bibitem{shifra}
Tatsuro Shimizu.
\newblock Multiframings of 3-manifolds.
\newblock {\em International Journal of Mathematics}, 31(08):2050062, 2020.

\bibitem{watanabeMorse}
Tadayuki Watanabe.
\newblock Higher order generalization of {F}ukaya's {M}orse homotopy invariant
  of 3-manifolds {I}. {I}nvariants of homology 3-spheres.
\newblock {\em Asian J. Math.}, 22(1):111--180, 2018.

\bibitem{Witten}
Edward Witten.
\newblock Quantum field theory and the {J}ones polynomial.
\newblock {\em Comm. Math. Phys.}, 121(3):351--399, 1989.

\end{thebibliography}

\end{document}